\newcommand{\blue}[1]{\textcolor{blue}{#1}}
\newtheorem{thm}{Theorem}[section]
\newtheorem{prop}[thm]{Proposition}
\newtheorem{lem}[thm]{Lemma}
\newtheorem{cor}[thm]{Corollary}
\theoremstyle{definition}
\newtheorem{defn}[thm]{Definition}
\theoremstyle{remark}
\newtheorem{remk}[thm]{Remark}
\newtheorem{remks}[thm]{Remarks}
\newtheorem{exm}[thm]{Example}
\newtheorem{exms}[thm]{Examples}
\newtheorem{notat}[thm]{Notation}
\numberwithin{equation}{section}
\newcommand{\thmref}{Theorem~\ref}
\newcommand{\propref}{Proposition~\ref}
\newcommand{\corref}{Corollary~\ref}
\newcommand{\lemref}{Lemma~\ref}
\newcommand{\sA}{{\mathcal A}}
\newcommand{\sC}{{\mathcal C}}
\newcommand{\sD}{{\mathcal D}}
\newcommand{\sF}{{\mathcal F}}
\newcommand{\sH}{{\mathcal H}}
\newcommand{\sI}{{\mathcal I}}
\newcommand{\sJ}{{\mathcal J}}
\newcommand{\sK}{{\mathcal K}}
\newcommand{\sO}{{\mathcal O}}
\newcommand{\sHH}{{\mathcal H}{\mathcal H}}
\newcommand{\sHC}{{\mathcal H}{\mathcal C}}
\newcommand{\tHC}{\widetilde{{\mathcal H}{\mathcal C}}}
\newcommand{\A}{{\mathbb A}}
\newcommand{\N}{{\mathbb N}}
\renewcommand{\P}{{\mathbb P}}
\newcommand{\Q}{{\mathbb Q}}
\newcommand{\Z}{{\mathbb Z}}
\newcommand{\fm}{{\mathfrak m}}
\newcommand{\fp}{{\mathfrak p}}
\newcommand{\surj}{\twoheadrightarrow}
\newcommand{\inj}{\hookrightarrow}
\newcommand{\Hom}{{\rm Hom}}
\newcommand{\Spec}{{\rm Spec \,}}
\newcommand{\Ker}{{\rm Ker}}
\newcommand{\id}{{\operatorname{id}}}
\newcommand{\eft}{{\operatorname{\mathbf{Eftalg}}}} 
\newcommand{\rft}{{\operatorname{\mathbf{Reftalg}}}} 
\newcommand{\Sch}{{\operatorname{\mathbf{Sch}}}}
\newcommand{\Sm}{{\mathbf{Sm}}}
\newcommand{\GL}{{\operatorname{\rm GL}}}
\newcommand{\SL}{{\operatorname{\rm SL}}}
\newcommand{\ds}{{/\kern-3pt/}}
\newcommand{\Proj}{{\operatorname{Proj}}}
\newcommand{\ov}{\overline}
\newcommand{\tuborg}{\left\{\begin{array}{ll}}
\newcommand{\sluttuborg}{\end{array}\right.}
\newcommand{\zar}{{\rm zar}}
\newcommand{\wt}{\widetilde}
\newcounter{elno}
\newcounter{elno-abc}   
\newcounter{elno-abc-prime}
\begin{document}
\title{$K$-theory of monoid algebras and a question of Gubeladze}
\author{Amalendu Krishna, Husney Parvez Sarwar}
\address{School of Mathematics, Tata Institute of Fundamental Research,  
1 Homi Bhabha Road, Colaba, Mumbai, India}
\email{amal@math.tifr.res.in}
\email{mathparvez@gmail.com}


\keywords{Algebraic $K$-theory, Monoid algebras, Singular schemes}

\subjclass[2010]{Primary 19D50; Secondary 13F15, 14F35}

\maketitle

\begin{quote}\emph{Abstract.}  
We show that for any commutative 
Noetherian regular ring $R$ containing $\Q$, the map
$K_1(R) \to K_1(\frac{R[x_1, \cdots , x_4]}{(x_1x_2 - x_3x_4)})$ is
an isomorphism. This answers a question of Gubeladze. We also
compute the higher $K$-theory of this monoid algebra. In particular,
we show that the above isomorphism does not extend to all higher $K$-groups.
We give applications to a question of Lindel on the Serre
dimension of monoid algebras.
\end{quote}
\setcounter{tocdepth}{1}
\tableofcontents

\section{Introduction}\label{sec:Intro}
The algebraic $K$-theory is well known to be a very powerful invariant to study
various geometric and cohomological properties of algebraic varieties.
Quillen \cite{Quillen} showed that algebraic $K$-theory of 
smooth varieties satisfies homotopy invariance.
This implies in particular that the algebraic $K$-theory 
of commutative Noetherian regular rings remains invariant under any
polynomial extension. 

The monoid algebras associated to finitely generated monoids
are natural generalizations of polynomial
algebras over commutative rings. These algebras form a very important
class of toy models
to test various properties of algebraic $K$-theory which are known to be
true for polynomial algebras. Questions like Serre's problem on 
projective modules, homotopy invariance of $K$-theory, and
$K$-regularity have been extensively
studied for monoid algebras by various authors, see for instance,
\cite{Anderson}, \cite{Gub-1}, \cite{Gub-2} and \cite{CHW-1}.

In a series of several papers, Gubeladze \cite{Gub-0}, \cite{Gub-3}
showed that for a regular commutative 
Noetherian ring $R$ and a monoid $M$, one has
$K_i(R[M]) = 0$ for $i < 0$ and $K_0(R) \xrightarrow{\simeq} K_0(R[M])$,
if $M$ is semi-normal. In particular, the analogue of
homotopy invariance extends to $K$-theory of monoid algebras over regular
rings in degree up to zero. However, Srinivas \cite{Srinivas}
showed that this property no longer holds in higher degrees, by showing
that $SK_1(k[M]) \neq 0$, where $k$ is any algebraically closed field of 
characteristic different from two and $k[M]$ is the monoid algebra
$k[x^2_1, x_1x_2, x^2_2] \subset k[x_1, x_2]$.

Gubeladze \cite{Gub-1} gave a different and more algebraic proof of 
Srinivas' result with no condition on the ground field $k$. 
He further showed that the above monoid algebra is not $K_1$-regular.  
This motivates the following question (See \cite[Question, pp 170]{Gub-1}). 
Let $k$ be a field and let $M$ be a finitely generated 
normal, cancellative, torsion-free monoid which has no 
non-trivial units and which is not $c$-divisible for any integer $c \neq 1$
(see \S~\ref{sec:Not} for definitions).
Assume that 
\begin{enumerate}
\item
$k[M]$ is $K_1$-regular and
\item
$SK_1(k[M]) = 0$.
\end{enumerate}
Is $M \simeq \N^r$?

Note that $c$-divisibility is very important here in view of 
\cite[Theorem~1.12]{Gub-2}. 
For any commutative ring $R$, let $R[M]$ denote the monoid algebra
\begin{equation}\label{eqn:Monoid-def}
R[M] =  R[x_1x_3, x_1x_4, x_2x_3, x_2x_4] \subset R[x_1, \cdots , x_4].
\end{equation}

From the geometric point of view, it is useful to note that $R[M]$ is 
the homogeneous 
coordinate ring for the Segre embedding of $ \P^1_R \times_R \P^1_R$ 
inside $\P^3_R$. 

When $R$ is a field, Gubeladze (\cite[Remark~1.9 (c)]{Gub-2}) 
asked if the monoid algebra ~\eqref{eqn:Monoid-def} is $K_1$-regular 
{\footnote {In fact, 
Gubeladze asked if this algebra is $K_i$-regular for all $i \ge 1$, but
the resolution of Vorst's conjecture by Corti{\~n}as, Haesemeyer and Weibel
\cite{CHW} shows that such algebras do not exist.}}.
If so, this could give a counter-example to the above question.
We refer to \cite{Gub-2} for Gubeladze's consideration of this monoid algebra 
and for an excellent account of many results and other open questions
on the algebraic $K$-theory of monoid algebras.

The algebraic $K$-theory of many monoid algebras
(including ~\eqref{eqn:Monoid-def}) were extensively studied by  
Corti{\~n}as, Haesemeyer, Walker and Weibel \cite{CHW-1} using the technique of
$cdh$-descent of homotopy invariant $K$-theory.
However, Gubeladze's question remained an open and intractable problem.

\subsection{Gubeladze's question}
The main result of this text is to show that Gubeladze's monoid
algebra is indeed a counter-example to the above question.
In fact, we prove the following more general statement.

\begin{thm}\label{thm:Main-1}
Let $R$ be a commutative Noetherian regular ring containing $\Q$. Then the
inclusion of scalars $R \inj R[M]$
induces an isomorphism 
\[
K_1(R) \xrightarrow{\simeq} K_1(R[M]).
\]
\end{thm}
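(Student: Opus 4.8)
The plan is to compare the $K$-theory of $R[M]$ with Weibel's homotopy $K$-theory $KH$, to compute the latter by $cdh$-descent, and to control the difference by cyclic homology via the theorem of Corti\~nas--Haesemeyer--Schlichting--Weibel, which applies because $\Q \subseteq R$.

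\textbf{Step 1 (computing $KH$).} Geometrically $X = \Spec(R[M])$ is the affine cone over the Segre quadric $Q = \P^1_R \times_R \P^1_R \inj \P^3_R$. Blowing up the vertex $v$ produces an abstract blow-up square whose blow-up $\tilde X \to X$ has exceptional divisor $E \cong Q$ and center $\{v\} \cong \Spec R$; moreover $\tilde X$ is the total space of the line bundle $\mathcal{O}_Q(-1)$, so $\tilde X \to Q$ is a vector bundle. Since $KH$ satisfies $cdh$-descent and is homotopy invariant, the restriction $KH(\tilde X) \to KH(E)$ is an isomorphism, the Mayer--Vietoris sequence collapses, and one obtains $KH(R[M]) \simeq KH(\Spec R) = K(R)$, the last equality by regularity of $R$. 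I would check that this isomorphism is induced by the scalar inclusion $R \inj R[M]$, using that the augmentation $R[M] \to R$ splits it. Hence $K_n(R[M]) \cong K_n(R) \oplus \mathcal F_n$ with $\mathcal F = \fib\!\big(K(R[M]) \to KH(R[M])\big)$, and the $n=1$ statement becomes $\mathcal F_1 = 0$.

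\textbf{Step 2 (reduction to cyclic homology).} To access $\mathcal F$ I would invoke the identification of $\fib(K \to KH)$ with $\fib(HN \to HN^{cdh})$ in negative cyclic homology over $\Q$. As periodic cyclic homology satisfies $cdh$-descent, the $HP$-terms cancel in the $SBI$ sequences and one gets $\mathcal F_n \cong \mathcal F^{HC}_{n-1}$, where $\mathcal F^{HC} = \fib(HC \to HC^{cdh})$. Because the Segre embedding is projectively normal, $R[M]$ is normal, hence seminormal, so $HC_0 \to HC_0^{cdh}$ is an isomorphism; the long exact sequence then yields $\mathcal F_1 \cong \mathcal F^{HC}_0 \cong \cok\!\big(HC_1(R[M]) \to HC_1^{cdh}(R[M])\big)$. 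It therefore suffices to prove surjectivity of this degree-$1$ comparison map.

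\textbf{Step 3 (the cyclic homology computation).} On one side, $HC_1(R[M]) = \Omega^1_{R[M]/\Q}/d(R[M])$ is computed directly from the toric/graded structure of the monoid algebra. On the other side, $HC_1^{cdh}(R[M])$ is read off from the blow-up square of Step 1, now using that cyclic homology agrees with its $cdh$-version on the smooth schemes $\tilde X$, $Q$ and $\Spec R$, so that it is expressed through the Mayer--Vietoris sequence relating $HC(\tilde X) \oplus HC(R)$ and $HC(Q)$. I would use the $\G_m$-action given by the grading of $R[M]$ to split every group into weight pieces and match them weight by weight, reducing surjectivity to a finite low-weight check at the cone point. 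The main obstacle is exactly here: one must control the K\"ahler differentials of the singular normal cone against those descending from its resolution and verify surjectivity in degree $1$, while the same map must \emph{fail} to be surjective in some higher degree (this failure is precisely what produces the promised nonvanishing of higher $K$-groups). This weight-by-weight differential analysis, rather than any formal step, is where the real difficulty lies.
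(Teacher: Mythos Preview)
Your Steps 1 and 2 are correct and constitute a clean reduction: CHSW gives $\fib(K\to KH)\simeq\fib(HN\to HN^{cdh})$, $cdh$-descent for $HP$ turns this into $\Sigma\,\fib(HC\to HC^{cdh})$, and seminormality of $R[M]$ handles $HC_0$. So the theorem is equivalent to the surjectivity of $HC_1(R[M])\to HC^{cdh}_1(R[M])$, exactly as you say.

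The gap is Step 3. You acknowledge that ``the real difficulty lies'' there, but you do not carry out the computation; what you have written is a strategy, not an argument. To make it a proof you would have to (i) compute $HC^{cdh}_1(X)$ from the Mayer--Vietoris sequence for the blow-up square, which requires the scheme-theoretic $HC_1$ and $HC_2$ of the non-affine resolution $\tilde X$ and of $Q=\P^1_R\times_R\P^1_R$; (ii) compute $\Omega^1_{R[M]/\Q}/d(R[M])$; and (iii) produce explicit preimages weight by weight. The paper's authors comment in the introduction that ``it is not clear if the technique of $cdh$-descent for the $KH$-theory employed in \cite{CHW-1} can be helpful'' here; that remark is aimed precisely at the step you have left open.

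The paper takes a genuinely different route. Instead of comparing $HC$ with its $cdh$-sheafification, it uses the \emph{pro}-$cdh$ descent theorem of \cite{Krishna-2} and \cite{Morrow}: one works with the tower of infinitesimal thickenings $P_n=\Spec(R[M]/\bar\fm^n)$ and $E_n=\pi^{-1}(P_n)$, and the problem becomes showing that the map of pro-abelian groups
\[
\{\wt{HC}_1(P_n)\}_n \longrightarrow \{H^0(E_n,\wt{\sHC}_{1,E_n})\}_n
\]
is surjective (\thmref{thm:Surj-Main}). This is the pro-analogue of your desired surjectivity, but the pro-setting allows one to discard terms that vanish only as pro-objects (e.g.\ Andr\'e--Quillen pieces like $\{\sD_1(E_n/P)\}_n$). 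The actual work is then a careful analysis of Andr\'e--Quillen homology sheaves on $E$, culminating in the identification $\{\sD_1(E_n/P_n)\}_n\simeq \Omega^1_{\P^3/k}|_E(1)$ (\propref{prop:Pro-iso-vb}) and the vanishing of its $H^0$ and $H^1$ (\corref{cor:Pro-vanish-D}). Your weight-by-weight plan, if it can be made to work at all, would have to reproduce content of comparable depth; the pro-formulation is what makes that content tractable here.
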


Using the commutative square
\begin{equation}\label{eqn:K1-reg}
\xymatrix@C1pc{
K_1(R) \ar[r]^-{\simeq} \ar[d]_{\simeq} & K_1(R[M]) \ar[d] \\
K_1(R[X_1, \cdots , X_n]) \ar[r]_-{\simeq} &
K_1(R[M][X_1, \cdots , X_n]),}
\end{equation}
\thmref{thm:Main-1} and the homotopy invariance of $K$-theory of regular rings
together imply the following.

\begin{cor}\label{cor:K1-reg*}
Let $R$ be a commutative Noetherian regular ring containing $\Q$. Then
$R[M]$ is $K_1$-regular.
\end{cor}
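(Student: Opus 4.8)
The plan is to deduce the corollary from \thmref{thm:Main-1} by a diagram chase through the square \eqref{eqn:K1-reg}, the whole point being that the hypotheses of the theorem are preserved under polynomial extension. First I would record two elementary facts. For any $n \ge 1$, the polynomial ring $R[X_1, \ldots, X_n]$ is again a commutative Noetherian regular ring containing $\Q$, so it satisfies the standing hypotheses of \thmref{thm:Main-1}. Moreover, the monoid algebra construction commutes with polynomial base change: there is a canonical identification $R[M][X_1, \ldots, X_n] \cong R[X_1, \ldots, X_n][M]$. This is immediate from the presentation $R[M] = R[x_1x_3, x_1x_4, x_2x_3, x_2x_4]$, since the monoid $M$ is a fixed combinatorial object and adjoining the variables $X_i$ to the ambient polynomial ring leaves the generating monomials unchanged.

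With these facts in hand I would analyze the four edges of \eqref{eqn:K1-reg}. The top map $K_1(R) \to K_1(R[M])$ is an isomorphism by \thmref{thm:Main-1}. The left map $K_1(R) \to K_1(R[X_1, \ldots, X_n])$ is an isomorphism by the homotopy invariance of $K$-theory for the regular ring $R$. The bottom map $K_1(R[X_1, \ldots, X_n]) \to K_1(R[X_1, \ldots, X_n][M])$ is an isomorphism by applying \thmref{thm:Main-1} a second time, now with the regular base ring $R[X_1, \ldots, X_n]$ in place of $R$ and using the identification above to rewrite the target; this is precisely the step where preservation of the hypotheses under polynomial extension is invoked. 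Since the square commutes and three of its four edges are isomorphisms, the remaining right-hand map $K_1(R[M]) \to K_1(R[M][X_1, \ldots, X_n])$ is forced to be an isomorphism as well. As $n \ge 1$ was arbitrary, this is exactly the statement that $R[M]$ is $K_1$-regular.

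There is no genuine obstacle here: once \thmref{thm:Main-1} is available, the corollary is a formal consequence obtained by applying it uniformly to every polynomial extension of $R$ and then chasing the commutative square. The only points demanding any attention are the bookkeeping identification $R[M][X_1, \ldots, X_n] \cong R[X_1, \ldots, X_n][M]$ and the observation that polynomial rings over $R$ inherit the regularity and the containment of $\Q$, both of which are routine. All the substantive work has already been carried out in the proof of \thmref{thm:Main-1}.
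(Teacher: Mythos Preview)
Your argument is correct and matches the paper's own proof: the paper simply points to the commutative square~\eqref{eqn:K1-reg}, invokes \thmref{thm:Main-1} for both $R$ and $R[X_1,\ldots,X_n]$, and uses homotopy invariance of $K$-theory of regular rings for the left vertical arrow, exactly as you do. The bookkeeping observations you add (that $R[M][X_1,\ldots,X_n]\cong R[X_1,\ldots,X_n][M]$ and that polynomial extensions preserve the hypotheses) are the only points the paper leaves implicit.
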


\subsection{Lindel's question}
As another application of \thmref{thm:Main-1}, we can give a partial answer to
a question of Lindel \cite{L95} on the Serre dimension of monoid algebras. 
Recall that a commutative Noetherian ring $R$ is said to 
have Serre dimension (denoted by {\sl Serre dim}$(R)$)
at most $t \ge 0$, if every projective $R$-module of
rank higher than $t$ splits off a rank one free direct summand.
Lindel studied the Serre dimension of the 
monoid algebra $R[M] = R[x_1x_3, x_1x_4, x_2x_3, x_2x_4]$ in  \cite{L95}.
He showed that {\sl Serre dim}$(R[M]) \le d + 1$,
where $R$ is any commutative Noetherian ring of Krull dimension $d$.

Motivated by his results, Lindel \cite{L95} asked if one actually has
{\sl Serre dim}$(R[M]) \le d$ in the above case.
Note that this bound on {\sl Serre dim}$(R[M])$
is the best possible for a general Noetherian ring $R$.
We apply \thmref{thm:Main-1} to prove the following result which is
due to Swan (\cite[Theorem~1.1]{Swan}) when $R$ is a Dedekind domain.

\begin{thm}\label{thm:Main-2}
Let $R$ be an one-dimensional Noetherian commutative ring containing $\Q$.
Let $P$ be a projective module over $R[M]$
of rank $r \ge 2$. Then $P \simeq \wedge^r(P) \oplus {R[M]}^{r-1}$.
In particular, $\mbox{\sl Serre dim}(R[M]) \le 1$.
\end{thm}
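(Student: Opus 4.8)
The plan is to prove Theorem~\ref{thm:Main-2} by combining the $K_1$-isomorphism of Theorem~\ref{thm:Main-1} with standard splitting criteria for projective modules over rings of small dimension. The guiding principle is that $R[M]$ behaves, as far as $K_1$ is concerned, exactly like the one-dimensional regular ring $R$, and Serre-type splitting results are governed precisely by such $K_1$-data together with the dimension of the maximal spectrum.

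First I would set up the splitting mechanism. Recall Bass's cancellation theorem and its refinements: for a projective module $P$ of rank $r$ over a commutative Noetherian ring $A$, if $r \geq \dim(\Max A) + 1$, then $P$ splits off a free rank-one summand, i.e. $P \simeq Q \oplus A$ with $\rank(Q) = r-1$. Here the relevant dimension is that of the maximal spectrum of $R[M]$. Since $R$ has Krull dimension one and $R[M]$ is a finitely generated $R$-algebra of relative dimension three (it is the coordinate ring of the affine cone over the Segre embedding $\P^1_R \times_R \P^1_R \inj \P^3_R$), the naive dimension bound gives only $\dim R[M] = 4$, which is far too large. So the point cannot be a crude dimension count; I would instead invoke Lindel's result quoted in the excerpt, namely that \textsl{Serre dim}$(R[M]) \le d+1$, which for $d=1$ yields \textsl{Serre dim}$(R[M]) \le 2$. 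This already splits off free summands from projectives of rank $r \ge 3$; the entire difficulty is concentrated in the single borderline rank $r = 2$.

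The heart of the argument is therefore the rank-$2$ case, and here is where Theorem~\ref{thm:Main-1} enters. Given $P$ of rank $2$, I want to show $P \simeq \wedge^2(P) \oplus R[M]$. The strategy is to realize $P$ as an extension (a stably free modification) of $\wedge^2(P)$ by $R[M]$ and to show the relevant obstruction to splitting vanishes. Concretely, by tensoring with $\wedge^2(P)^{-1}$ (which is an invertible $R[M]$-module), I reduce to showing that a rank-$2$ projective $P'$ with trivial determinant is isomorphic to $R[M] \oplus R[M]$, i.e. is free. The obstruction to such a rank-$2$ projective with trivial determinant being free lives in a quotient of $SK_1$-type data, and Corollary~\ref{cor:K1-reg*} together with Theorem~\ref{thm:Main-1} forces $SK_1(R[M]) = 0$ (since $K_1(R) \to K_1(R[M])$ is an isomorphism and $SK_1(R) = 0$ for a one-dimensional $R$ after suitable normalization). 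Following Swan's approach for Dedekind domains, the vanishing of $SK_1$ is exactly what upgrades the stable freeness coming from the determinant to genuine freeness at the critical rank.

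The main obstacle I anticipate is closing the gap between the vanishing of $SK_1(R[M])$ and the unstable splitting statement at rank $r=2$. The $K_1$-computation is inherently a \emph{stable} statement, whereas Theorem~\ref{thm:Main-2} is an \emph{unstable} cancellation result, and bridging this gap requires a cancellation theorem rather than pure $K$-theory. I expect to need a Bass--Suslin--type cancellation argument over $R[M]$ valid at rank equal to the dimension, using that $R[M]$ is a reasonably nice (Cohen--Macaulay, normal) ring and that $R$ contains $\Q$; the hypothesis $\Q \subseteq R$ is presumably what allows the transfer arguments and the $cdh$-descent inputs behind Theorem~\ref{thm:Main-1} to go through. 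The delicate point will be verifying that the determinant map $P \mapsto \wedge^2(P)$ admits a section precisely when the $SK_1$ obstruction vanishes, which is where I would mirror Swan's computation \cite{Swan} and adapt it from the Dedekind setting to the monoid algebra $R[M]$ over a one-dimensional base.
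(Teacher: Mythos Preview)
Your sketch has genuine gaps at exactly the points that matter.

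First, the reduction step is incorrect: if $P$ has rank $2$ and $L = \wedge^2 P$, then $\wedge^2(P \otimes L^{-1}) = L \otimes L^{-2} = L^{-1}$, not $\sO$, so tensoring by $L^{-1}$ does not trivialize the determinant. There is no canonical way to reduce to trivial determinant without a square root of $L$.

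Second, and more fundamentally, you cannot invoke \thmref{thm:Main-1} for $R[M]$ itself: that theorem requires $R$ to be \emph{regular}, whereas here $R$ is an arbitrary one-dimensional Noetherian $\Q$-algebra. Saying ``after suitable normalization'' hides the entire argument. The paper's proof makes this passage precise via a Milnor (conductor) square
\[
\xymatrix@C1pc{
R[M] \ar[r] \ar[d] & S[M] \ar[d] \\
R'[M] \ar[r] & S'[M],}
\]
where $S$ is the normalization and $R' = R/C$, $S' = S/C$ are zero-dimensional. One then compares $P$ and $Q := \wedge^r P \oplus R[M]^{r-1}$ around the square: they agree over $S[M]$ (by Gubeladze's Anderson-conjecture theorem applied to the one-dimensional normal ring $S$) and over $R'[M]$ (zero-dimensional, projectives free), so the obstruction to $P \simeq Q$ is a patching matrix in $\GL_n(S'[M])$. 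After normalizing the determinant one lands in $\SL_n(S'[M])$, and \emph{this} is where \thmref{thm:Main-1} is applied --- to $S'_{\rm red}$, a product of fields, hence regular --- yielding $SK_1(S'[M]) = 0$ so the matrix is elementary and lifts. Your proposal never constructs this square, and the ``$SK_1$-type obstruction'' you allude to is precisely this patching obstruction, not something intrinsic to $P$ alone.

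Third, the argument above needs $R$ to have finite normalization, which fails in general. The paper handles arbitrary $R$ by a further patching (Roy's trick) along the $s$-adic completion $\widehat{R}$, whose reduction does have finite normalization; this introduces a stabilization, which is then undone via the Dhorajia--Keshari cancellation theorem for this monoid. Your appeal to Lindel's $d+1$ bound plus a hoped-for Bass--Suslin cancellation at rank $2$ does not substitute for any of these steps.
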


\subsection{Higher $K$-theory of $k[M]$}
When $k$ is a field which is algebraic over $\Q$, we can improve
\thmref{thm:Main-1} to prove the following more general result.

\begin{thm}\label{thm:Mian-3}
Let $k$ be a field which is algebraic over $\Q$.
Then for any integer $i \notin \{2,3,4\}$, one has
\[
K_i(k) \xrightarrow{\simeq} K_i(k[M]).
\]
\end{thm}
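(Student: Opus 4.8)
The plan is to separate the $K$-theory of $A := k[M]$ into its homotopy-invariant part and an infinitesimal correction, compute each, and check that the correction is concentrated in degrees $\{2,3,4\}$. Write $X = \Spec(A)$; geometrically $X$ is the affine cone over the Segre quadric $V := \P^1 \times \P^1 \inj \P^3$, a three-dimensional variety with an isolated singularity at the vertex $v$. The two ingredients are (i) the computation $KH_i(A) \cong K_i(k)$ for all $i$, and (ii) an identification of $\fib(K(A) \to KH(A))$ with a cyclic-homology term which, for $k$ algebraic over $\Q$, becomes purely combinatorial and vanishes outside a narrow range of degrees.

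For (i), I would resolve the vertex by blowing it up: the blow-up $Y \to X$ is the total space of the line bundle $\cO_V(-1,-1)$ over $V$, hence smooth, with exceptional divisor $E \cong V$ the zero section, and $Y \setminus E \cong X \setminus \{v\}$. This is an abstract blow-up (cdh) square, and since $KH$ satisfies cdh-descent it yields a Mayer--Vietoris sequence relating $KH_*(X)$, $KH_*(Y) \oplus KH_*(v)$ and $KH_*(E)$. By homotopy invariance $KH_*(Y) = K_*(V)$, while $KH_*(E) = K_*(V)$ and $KH_*(v) = K_*(k)$; the restriction $Y \to E$ induces the identity on $K_*(V)$ after the projection isomorphism $K_*(Y) \cong K_*(V)$. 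The projective bundle formula gives $K_*(V) \cong K_*(k)^{\oplus 4}$ and shows the resulting map $K_*(V) \oplus K_*(k) \to K_*(V)$ is split surjective with kernel $K_*(k)$; hence all connecting maps vanish and $KH_i(A) \cong K_i(k)$, compatibly with the scalar inclusion.

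For (ii), I would invoke the fundamental theorem of Corti\~{n}as--Haesemeyer--Schlichting--Weibel together with Corti\~{n}as's solution of the KABI problem: over a field of characteristic zero $KH = K^{cdh}$, and the fiber $\fib(K(A) \to KH(A))$ is identified, up to a degree shift, with the cdh-fiber of cyclic homology $HC(-/\Q)$. The essential simplification comes from the hypothesis that $k$ is algebraic over $\Q$: then $\Omega^1_{k/\Q} = 0$, so $HH_*(k/\Q) = k$ is concentrated in degree zero, and the K\"unneth isomorphism for Hochschild homology applied to $k[M] = k \otimes_\Q \Q[M]$ gives $HH_*(k[M]/\Q) \cong k \otimes_\Q HH_*(\Q[M]/\Q)$. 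Consequently the whole cyclic-homology correction is obtained by base change from the combinatorial cyclic homology of the monoid algebra $\Q[M]$, which carries the grading by $M$ and can be analysed one weight at a time.

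The final step is to compute this graded cyclic-homology fiber and read off the degrees. Using the Hochschild--Kostant--Rosenberg decomposition on the resolution together with the grading, the correction is assembled from the twisted Hodge cohomology groups $H^q(V, \Omega^p_V(j))$ of the quadric $V$ for $j \ge 1$; since $\dim V = 2$ and $V$ is toric, these are explicit and nonzero only for a short list of triples $(p,q,j)$. The expectation is that the resulting graded pieces are nonzero precisely in cyclic-homology degrees $1,2,3$, so that after the Goodwillie shift the correction to $K$-theory lives exactly in degrees $2,3,4$; everywhere else $\fib(K(A) \to KH(A))$ vanishes and $K_i(k) \xrightarrow{\simeq} K_i(A)$. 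The main obstacle is precisely this last computation: organising the graded Hochschild and cyclic complexes of $\Q[M]$, matching them to the twisted cohomology of $V$, and tracking the $SBI$-periodicity and the Goodwillie shift carefully enough to certify both the nonvanishing in $\{2,3,4\}$ and the vanishing outside it. The compatibility of this analysis with the already-established case of $K_1$ (\thmref{thm:Main-1}) provides a useful consistency check at the bottom of the range.
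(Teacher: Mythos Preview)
Your strategy is sound and would lead to a proof, but it follows a genuinely different route from the paper's. The paper never passes through $KH$ or the Corti{\~n}as--Haesemeyer--Schlichting--Weibel identification of $\fib(K\to KH)$. Instead it applies the pro-$cdh$ descent theorem for Quillen $K$-theory from \cite{Krishna-2} and \cite{Morrow} directly to the blow-up square~\eqref{eqn:Main-diagram}: this yields the long exact sequence~\eqref{eqn:MT-1-1} of pro-abelian groups relating $\wt{K}_*(X)$ to the pro-systems $\{\wt{K}_*(P_n)\}_n$ and $\{\wt{K}_*(E_n)\}_n$ attached to the infinitesimal thickenings of the vertex and the exceptional divisor. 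Goodwillie's theorem converts these relative $K$-groups into relative cyclic homology of the thickenings, and the substance of the argument (\thmref{thm:MT-General}, \propref{prop:Alg-Main}, and ultimately \thmref{thm:Main-alg-K}) is a direct analysis of the sheaves $\Omega^*_{E_n/k}$ and the modules $\Omega^*_{Q_n/k}$. The hypothesis that $k$ is algebraic over $\Q$ enters by forcing $\Omega^{\ge 3}_{E/k}=0$ and $\Omega^{\ge 5}_{Q_n/k}=0$, which kills the relevant pro-cokernels for $i\ge 5$; the cases $i\le 1$ are handled by \thmref{thm:Main-1} and Gubeladze's earlier results. The twisted Hodge groups $H^q(E,\Omega^p_{E/k}(j))$ of \lemref{lem:Coh-main} are the common computational endpoint of both approaches.

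What each approach buys: yours is the framework of \cite{CHW-1} for cones over smooth projective varieties, so the step you flag as ``the main obstacle'' is in principle a specialization of that machinery to $V=\P^1\times\P^1$---nontrivial bookkeeping with the $SBI$ sequence and the cdh-fiber of $HN$ (or $HC$), but no genuinely new idea. The paper's pro-descent approach sidesteps computing $HC^{cdh}(X)$ altogether; in particular it never needs the Hochschild or cyclic homology of the smooth blow-up $Y$, which is not $\A^1$-invariant, and works entirely with finite-order thickenings. This is precisely what allows the method to extend uniformly to an arbitrary regular $\Q$-algebra $R$ in place of $k$, which is essential for \thmref{thm:Main-1} and \corref{cor:K1-reg*} but irrelevant for the present statement. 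For \thmref{thm:Mian-3} taken in isolation, your outline is a legitimate alternative, provided you actually carry out the final Hodge-theoretic computation you defer.
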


We can apply  \thmref{thm:Mian-3} to show that it can not
be extended to all polynomial algebras over $k$ for $i\geq 5$.
Indeed, if $K_i(k[X_1,\ldots, X_n]) \xrightarrow{\simeq} 
K_i(k[M][X_1,\ldots,X_n])$ for all $n \ge 1$, the homotopy invariance
of $K_*(k)$ and \thmref{thm:Mian-3} together would tell us that
$k[M]$ is $K_5$-regular. In particular, it is $K_4$-regular by
\cite[Corollary~2.1 (ii)]{Vorst79}. But this is not possible in view of 
\cite{CHW}.

\thmref{thm:Main-1} shows that Gubeladze's monoid algebra shares some
important $K$-theoretic properties (see the question before 
~\eqref{eqn:Monoid-def} for other common properties) of free monoid 
algebras even though it is not free. This raises the following question:
can algebraic $K$-theory actually detect this defect?

We answer this question affirmatively
by giving an explicit  expression for all $K$-groups of the
above monoid algebra in \thmref{thm:Main-alg-K}.
Using these computations,  
we show that the isomorphisms of Theorems~\ref{thm:Main-1}
and ~\ref{thm:Mian-3} do not extend to all $K$-groups.
In particular, the algebraic $K$-theory does explain the non-freeness of
Gubeladze's monoid algebra.

\begin{thm}\label{thm:Main-4}
Let $k$ be any field of characteristic zero. Then
the map $K_4(k) \to K_4(k[M])$ is not an
isomorphism. In particular, the map of spectra $K(k) \to 
K(k[M])$ is not a weak equivalence.
\end{thm}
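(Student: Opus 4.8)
The plan is to show that the reduced group $\wt K_4(k[M]) := \cok\!\big(K_4(k)\to K_4(k[M])\big)$ is nonzero. Since the augmentation $k[M]\to k$, $x_ix_j\mapsto 0$, splits the inclusion of scalars $k\inj k[M]$, the map $K_4(k)\to K_4(k[M])$ is a split injection, so it fails to be an isomorphism precisely when this cokernel is nonzero. The concluding (``in particular'') assertion is then automatic: a weak equivalence of spectra $K(k)\to K(k[M])$ would induce isomorphisms on all homotopy groups, in particular on $K_4$, contradicting the first part.

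First I would trade $K$-theory for homotopy $K$-theory. The blow-up of $\Spec k[M]$ at its cone point is the total space $\wt X$ of the line bundle $\mathcal O(-1,-1)$ over $Q=\P^1\times_k\P^1$, with exceptional divisor $E\cong Q$ and center a single $k$-point; this is an abstract blow-up ($cdh$) square all of whose corners are smooth except $\Spec k[M]$ itself. Since $KH$ is homotopy invariant and satisfies $cdh$-descent, and since $\wt X\to Q$ is an $\A^1$-bundle, the associated Mayer--Vietoris sequence collapses to give $KH_n(k[M])\cong K_n(k)$, compatibly with the augmentation. Hence $\wt K(k[M])\simeq \fib\big(K(k[M])\to KH(k[M])\big)$, and by the theorem of Corti\~nas--Haesemeyer--Schlichting--Weibel (valid because $\Char k=0$) this fiber is computed by relative cyclic homology: it is equivalent to $\fib\big(HN(k[M])\to \H_{cdh}(\Spec k[M],HN)\big)$, and via the $HC$--$HN$--$HP$ sequence together with the $cdh$-invariance of periodic cyclic homology, to a shift of $\fib\big(HC(k[M])\to \H_{cdh}(\Spec k[M],HC)\big)$.

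Next I would compute this cyclic-homology defect in degree $4$. Using the $\N$-grading $k[M]=\bigoplus_{n\ge 0}H^0(Q,\mathcal O(n,n))$, the Hochschild and cyclic homology of the cone decompose by weight, and $\H_{cdh}$ of the smooth corners $\wt X,\ Q,\ \Spec k$ is ordinary cyclic homology; the defect is therefore expressible through the cohomology groups $H^q(Q,\Omega^p_{Q/k}(n,n))$ and the de Rham complex of $k$. This is precisely the computation carried out in all degrees in \thmref{thm:Main-alg-K}; the point I need is that in degree $4$ the resulting term contains a nonzero $k$-vector space that does \emph{not} originate from the K\"ahler differentials $\Omega^{\ge 1}_{k/\Q}$ of the ground field. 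Concretely, even when $\Omega^1_{k/\Q}=0$ --- for instance when $k$ is algebraic over $\Q$, the range in which \thmref{thm:Mian-3} leaves degrees $2,3,4$ open --- the weight/Hodge contribution of $Q=\P^1\times\P^1$ survives and produces a nonzero summand of $\wt K_4(k[M])$. Since this summand is a $k$-module obtained by base change from $\Q$, it is nonzero for every field $k$ of characteristic zero, which is exactly the uniform statement required.

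The main obstacle is this last computation: isolating the degree-$4$ piece of the cyclic-homology defect of the cone and proving it is nonzero independently of $k$. The delicate points are the bookkeeping of the weight-graded pieces $H^q(Q,\Omega^p_Q(n,n))$ through the SBI sequence (so as to land in the correct total degree $4$), and separating the part of the answer intrinsic to the singularity from the part controlled by $\Omega^*_{k/\Q}$ --- it is only the former that guarantees nonvanishing for all characteristic-zero fields and thereby strengthens \thmref{thm:Mian-3}, which is silent in degree $4$, to the unconditional conclusion of the theorem.
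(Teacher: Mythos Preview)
Your outline sets up a reasonable framework, but there is a genuine gap: the nonvanishing computation that you yourself flag as ``the main obstacle'' is never carried out. You invoke \thmref{thm:Main-alg-K} as if it supplies this, but that theorem (i) is only stated for $k$ algebraic over $\Q$, and (ii) even then merely identifies $\wt K_4(k[M])$ with the pro-abelian group $\{\Omega^3_{Q_n/k}/d(\Omega^2_{Q_n/k})\}_n$ without asserting it is nonzero. The phrase ``the weight/Hodge contribution of $Q=\P^1\times\P^1$ survives and produces a nonzero summand'' is exactly the statement that needs proof, and your sketch gives no mechanism for it.

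The paper proceeds differently. Rather than the $KH$/CHSW route, it uses pro-$cdh$ descent to obtain \thmref{thm:MT-General}, which expresses $\wt K_m(X)$ via a short exact sequence of pro-cyclic-homology groups. For $m=4$ the relevant kernel term is shown to surject onto $\{\Omega^4_{Q_n/k}\}_n$, and the crux is the explicit computation of \lemref{lem:No-reg-step}: the surjection $\Omega^4_{B/k}\surj\Omega^4_{Q/k}$ (with $B=k[x_1,\ldots,x_4]$, $Q=B/(x_1x_2-x_3x_4)$) kills $\fm\cdot\omega$ but not $\omega=dx_1dx_2dx_3dx_4$, giving $\Omega^4_{Q/k}\simeq k\omega$. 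This one-dimensional space is the concrete witness to $\wt K_4(k[M])\neq 0$, and it is defined over $\Q$, which is what makes the conclusion uniform in $k$. Your outline lacks any analogue of this calculation.

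Your $KH$-based framework is in principle viable and is the method of \cite{CHW-1} for cones; if pursued, it would lead to the same cohomological bookkeeping on $\P^1\times\P^1$ and ultimately to an equivalent nonvanishing statement. But as written you have reduced the theorem to a computation and then not done it.
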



It is known that the reduced algebraic $K$-theory 
(${\rm Ker}(K_*(k[M]) \to K_*(k)$) of the above monoid algebra  
consists of uniquely divisible groups $\wt{K}_*(k[M])$ 
(e.g., see \cite[\S~1]{CHW-1}).
In particular, there is a direct sum decomposition 
$\wt{K}_i(k[M]) \simeq \oplus_{j \ge 0} \wt{K}^{(j)}_i(k[M])$ of eigen pieces
for Adams operation, whenever $i \ge 0$.
For monoid algebras arising from the cones over smooth projective 
schemes, many of these pieces have been computed by
Corti{\~n}as, Haesemeyer, Walker and Weibel
in \cite{CHW-1}. 

However, it is not clear if the technique of $cdh$-descent for the $KH$-theory 
employed in \cite{CHW-1} can be helpful in
computing the $K$-theory of Gubeladze's monoid algebra.
Instead, we use the more recent
pro-$cdh$ descent for the Quillen $K$-theory and work in the category of
pro $K$-groups to make the above breakthrough. In fact,
the referee pointed out that the results of this paper are possibly the very 
first applications of the pro-$cdh$ descent to the study of monoid algebras.

We end the introduction of the results with a note. Some critics may ask if
our methods help in computing the $K$-theory of other monoid algebras.
To this, we remark that even if it may be possible, 
it was not the purpose of this 
work to devise general techniques to study $K$-theory of monoid algebras. 
This is already done in \cite{CHW-1} in the best possible way. Our motivation
was to verify some conjectures and questions of Gubeladze which remained
intractable, even after \cite{CHW-1}. 
And one should note that Gubeladze's question is
about producing counter-examples in $K$-theory of monoid algebras.

\subsection{Outline of the proofs}
As stated above, a new idea which turned out to be very crucial for 
computing the
higher $K$-theory of the monoid algebra of ~\eqref{eqn:Monoid-def}
is the pro-descent theorem of \cite{Krishna-2} and \cite{Morrow}.
This descent theorem tells us that it is often enough to prove many vanishing
theorems for Hochschild and cyclic homology groups only in
the pro-setting in order to compute higher $K$-theory of singular schemes.

In \S~\ref{sec:Review},
we review finitely generated monoids and monoid algebras. 
We also study the geometry of the Zariski spectrum of the monoid
algebra of Theorem~\ref{thm:Main-1} and use this geometry to
compute the cohomology groups of the sheaves of differential forms
on this algebra. The heart of this text are sections ~\ref{sec:R-reg} and
~\ref{sec:Global-sec}, where we prove some vanishing theorems
which are critical for the proofs of the above results.
In \S~\ref{sec:Alg}, we generalize some results of \S~\ref{sec:Global-sec}
in order to compute the higher $K$-theory of our monoid algebra.
In \S~\ref{sec:MT}, we combine the vanishing theorems of 
sections ~\ref{sec:R-reg} and ~\ref{sec:Global-sec} with the 
pro-descent theorem of \cite{Krishna-2} and \cite{Morrow} to
complete the proofs of our main results.

\section{Review of monoid algebras and their geometry}
\label{sec:Review}
In this section, we set up our notations and
review various definitions in the study of monoids
and monoid algebras. We then study the geometry of the monoid
algebra $k[x_1x_3, x_1x_4, x_2x_3, x_2x_4]$, whose algebraic $K$-theory is
the main interest of this text.

In this paper, a ring $R$ will always mean a 
commutative, Noetherian $\Q$-algebra. 
In particular, all fields will be of characteristic zero. Given a ring $R$
as above, we shall let $\eft_R$ denote the category of
essentially of finite type $R$-algebras and let $\rft_R$ denote
the full subcategory of $\eft_R$ consisting of smooth
$R$-algebras. We shall let $\Sch_R$ denote the category of
separated Noetherian schemes over $\Spec(R)$ and let $\Sm_R$ denote the
full subcategory of $\Sch_R$ consisting of schemes which are smooth and
essentially of finite type over $R$. 
We shall denote the product of $X, Y \in \Sch_R$ by $X \times_R Y$.
We shall let ${\Sch_R}/{\zar}$ denote the Grothendieck site on $\Sch_R$
given by the Zariski topology.

\subsection{Commutative monoids }\label{sec:Not}
Let $\N$ denote the set of all non-negative integers.
Let $M$ be a commutative and finitely generated 
{\sl monoid} (a semi-group with unit element). 
Recall that $M$ is said to be {\sl cancellative} if given
$a, a', b \in M$, we have $a+ b = a' + b \Rightarrow a = a'$.
One says that $M$ is {\sl torsion-free} if given any integer $c \ge 1$
and $a, b \in M$, one has $ca = cb \Rightarrow a = b$. 
We say that $M$ is {\sl reduced} if it has no non-trivial units
(i.e, $a = 0$ in $M$ if and only if $a+ a' = 0$ for some $a' \in M$).
It is well known that a finitely generated commutative monoid
is cancellative, torsion-free and reduced if and only if
it is isomorphic to a submonoid of $\N^r$ for some integer $r \ge 1$
(see, for instance, \cite[Theorem~3.11]{RGS}). 

Let $M$ be a commutative and cancellative monoid and let $gp(M)$ denote
its group completion. Recall that $M$ is said to be {\sl semi-normal}
if for every $a \in gp(M)$ with $2a, 3a \in M$, we have $a \in M$. 
One says that $M$
is {\sl normal} if every element $a \in gp(M)$ lies in $M$ if and only if
$ma \in M$ for some integer $m \ge 1$. 
We say that $M$ is $c$-divisible for some positive integer $c$ if for any 
$x  \in M$, there exists $y  \in M$ for which
$cy = x$. 
In this text, we shall assume all monoids to be commutative, 
finitely generated, cancellative, torsion-free, reduced and normal. 
We shall use the shorthand decoration {\sl nice} for such monoids in the sequel.

\subsection{Gubeladze's monoid algebra}\label{sec:GMA}
Our principal interest in this text is to study the algebraic $K$-theory
of the following monoid algebra, considered by Gubeladze \cite[\S~1]{Gub-2}.

For any integer $r \ge 1$, let $\{e_1, \cdots , e_r\}$ denote the standard
ordered basis of $\N^r$ as a monoid. Here, $e_i$ is the vector whose
$i$th coordinate is one and others are all zero.
Let $M$ denote the submonoid of $\N^4$ generated by the vectors
$\{v_1, \cdots, v_4\}$, where
\[
v_1 = e_1 + e_3, \ v_2 = e_1 + e_4, \ v_3 = e_2 + e_3, \ \ \mbox{and} \ \ 
v_4 = e_2 + e_4.
\]

We shall use the term {\sl Gubeladze's monoid} for the one given above
for the rest of this text.
One can check that $M$ is not divisible by any integer $c > 1$.
Indeed, if $M$ is $c$-divisible for integer $c > 1$, then we can write
$v_1 = c(a_1v_1 + a_2v_2 + a_3v_3 + a_4v_4)$, where $a_i \in \N$.
But this implies that $c(a_1+ a_2) = 1$, which is not possible.

It is also easy to check that for any ring $R$, the monoid algebra $R[M]$
is isomorphic to the monomial $R$-algebra 
$R[z_1z_3, z_2z_4, z_1z_4, z_2z_3]$, 
considered as the subalgebra of the polynomial algebra $R[z_1, \cdots, z_4]$
via the isomorphism $v = (m_1, \cdots , m_4) \mapsto z^{m_1}_1 \cdots z^{m_4}_4$. 
There is in turn an isomorphism of $R$-algebras
\begin{equation}\label{eqn:GMA-0}
\phi_R: \frac{R[x_1, \cdots , x_4]}{(x_1x_2 - x_3x_4)} \xrightarrow{\simeq} 
R[z_1z_3, z_2z_4, z_1z_4, z_2z_3];
\end{equation}
\[
\phi_R(x_1) = z_1z_3, \ \phi_R(x_2) = z_2z_4, \ \phi_R(x_3) = z_1z_4, \
\phi_R(x_4) = z_2z_3.
\]   

Let $\epsilon_R : R[M] \surj R$ denote the augmentation map of the monoid 
algebra $R[M]$ so that the composite $R \to R[M] \to R$ is identity and
let $\bar{\fm}_R = (x_1, \cdots , x_4)$ denote the augmentation ideal.
It is well known (and not so difficult to check) that the monoid $M$ above
is nice. In particular, $R[M]$ is an integral domain if $R$ is so and
$R[M]$ is normal if $R$ is a normal integral domain (see, for instance,
\cite[Theorem~4.40]{BG}). 
On the other hand, it follows from \cite[Proposition~4.45]{BG} that
$R[M]$ is not regular even if $R$ is. We shall study the singularity of $R[M]$
shortly. From now on, $R[M]$ will always denote the monoid algebra
described in ~\eqref{eqn:GMA-0}. We shall often refer to it as
Gubeladze's monoid algebra.

\subsection{The desingularization of $\Spec(R[M])$ and
its geometry}\label{sec:Geom-RM}
Let $R$ be a regular ring. We observed in \S~\ref{sec:Intro} 
that $R[M]$ is the homogeneous 
coordinate ring for the Segre embedding of $E_R = \P^1_R \times_R \P^1_R$ 
inside $\P^3_R$. We shall use this geometric interpretation 
in our study of the monoid algebra $R[M]$.
Letting $\P^3_R = \Proj_R(R[x_1, \cdots , x_4])$, this closed
embedding is defined by the graded surjection $R[x_1, \cdots , x_4] \surj 
R[M]$ of ~\eqref{eqn:GMA-0}. We shall fix this embedding 
$\iota_R: E_R \inj \P^3_R$ 
throughout the text. We shall let
$\Proj_R(R[z_1, z_2])$ be the first factor and 
$\Proj_R(R[z_3, z_4])$ the second factor of $E_R$.

If we let $X_R = \Spec(R[M])$, then it is the affine cone over
$E_R$ whose vertex $P_R$ is the closed subscheme $\Spec(R) \inj \Spec(R[M])$ 
defined by the augmentation ideal of $R[M]$. 
If we let $\pi_R: V_R \to X_R$ denote the blow-up along the vertex $P_R$, then
we get a commutative diagram
\begin{equation}\label{eqn:Main-diagram}
\xymatrix@C1pc{
E_R \ar@{^{(}->}[r] \ar[d]_{{p}_{E_R}} \ar@/^1.5pc/[rrr]^{{\rm id}} & 
E_{n,R} \ar@{^{(}->}[r] \ar[d] & V_R 
\ar[d]^{\pi_R} \ar[r]^{\wt{p}_R} & E_R \ar[d]^{p_{E_R}} \\
P_R \ar@{^{(}->}[r] & P_{n,R} \ar@{^{(}->}[r] & X_R \ar[r]_{p_{X_R}} & \Spec(R),}
\end{equation}
where $P_{n,R}$ is the closed subscheme of $X_R$ defined by the ideal
$\bar{\fm}^n_R$ and $E_{n,R} = \pi^*_R(P_{n,R})$ so that the left and the middle
squares are Cartesian. The arrows $p_{X_R}$ and $p_{E_R}$ are the 
structure maps. It is well known that $\wt{p}_R: V_R \to E_R$ is the line
bundle associated to the invertible sheaf $\sO_E(1)$ (with respect to
the Segre embedding) on $E_R$ and the inclusion of the exceptional divisor
$E_R \inj V_R$ for $\pi_R$ is same as the 0-section of $\wt{p}_R$.  
We let ${p}_R$ be the restriction of $\wt{p}_R$ to $E_{n,R}$.
We let $\sI_R$ denote the ideal sheaf of the exceptional divisor $E_R$ so that
$\sI^n_R$ defines $E_{n,R} \inj V_R$ for $n \ge 1$. In this case, one also
knows that ${\sI^n_R}/{\sI^{n+1}_R} \simeq \sO_{E_R}(n)$ for every $n \ge 0$.

We let $V'_R$ denote the blow-up of $\A^4_R$ at the origin, 
$P'_{n, R} = \Spec({R[x_1, \cdots , x_4]}/{\fm^n_R})$, 
where $\fm_R$ is the defining ideal
of the origin in $\A^4_R$ and let $E'_{n, R}$ be the inverse image of
$P'_{n,R}$ under the blow-up $\pi': V'_R \to \A^4_R$.
We let $E'_R = \pi'^{-1}(P_R) \simeq \P^3_R$.
We have a commutative diagram
\begin{equation}\label{eqn:Blow-up-2}
\xymatrix@C1pc{
& X_R \times E_R  \ar[dd] \ar[rr] \ar[dl]_{q_R} & & X_R \ar[dd] \\
E_R \ar[dd] & & V_R \ar[ul] \ar[ur]_{\pi_R} \ar[ll]^/-.2cm/{\wt{p}_R} \ar[dd] 
& \\
& \A^4_R \times E'_R \ar[rr] \ar[dl]_{q'_R} & & \A^4_R \\ 
\P^3_R & & V'_R. \ar[ul] \ar[ur]_{\pi'_R} \ar[ll]^{p'_R} &}
\end{equation}

In this diagram, all vertical arrows are closed immersions,
$\pi_R$ and $\pi'_R$ are blow-up maps, $\wt{p}_R$ and $p'_R$ are 
line bundle projections, $q_R$ and $q'_R$ are projections, 
and all squares and triangles commute.
The lower square on the front is Cartesian. We note that
\[
V'_R = \Proj_{R[x_1,\cdots, x_4]}({R[x_1, \cdots ,x_4][y_1, \cdots, y_4]}/I), \ \
V_R =  \Proj_{R[M]}({R[M][y_1, \cdots, y_4]}/J),
\]
where
\begin{equation}\label{eqn:Blow-up-3}
I = (x_1y_2 - x_2y_1, 
x_1y_3-x_3y_1, x_1y_4-x_4y_1, x_2y_3-x_3y_2, x_2y_4-x_4y_2, x_3y_4-
x_4y_3)
\end{equation}
\[
\mbox{and} \ \ J = I + (y_1y_2 - y_3y_4).
\]

We also have a commutative diagram
\begin{equation}\label{eqn:Blow-up-4}
\xymatrix@C1pc{
& E_{n,R} \ar[dl]_{{p}_R}
\ar[r] \ar[d] & P_{n,R} \times E_R \ar[r] \ar[d] & P_{n,R} \ar[d] \\
E_R & V_R \ar[r] \ar[l]^{\wt{p}_R} & X_R \times E_R \ar[r] & X_R,}
\end{equation}
with Cartesian squares for each $n \ge 1$, 
where the vertical and the left horizontal
arrows are closed immersions, and the right horizontal arrows are projections.

In terms of the coordinate rings over an affine open subset of $\P^3_R$ of the 
form $U_1 = \{y_1 \neq 0\}$, ~\eqref{eqn:Blow-up-4} is translated into
a commutative diagram
\begin{equation}\label{eqn:Blow-up-5}
\xymatrix@C1pc{
\frac{R[x_1, \cdots , x_4]}{(x_1x_2-x_3x_4)} \ar[r] \ar[d] &
\frac{R[x_1, \cdots , x_4, y_3,y_4]}{(x_1x_2-x_3x_4)} \ar[r]^-{\alpha} \ar[d] &
R[x_1, y_3, y_4] \ar[d] & R[y_3, y_4] \ar[l] \ar[dl] \\
\frac{R[x_1, \cdots , x_4]}{(x_1x_2-x_3x_4, \fm^n_R)} \ar[r] &
\frac{R[x_1, \cdots , x_4, y_3,y_4]}{(x_1x_2-x_3x_4, \fm^n_R)} 
\ar[r]^-{\ov{\alpha}} & \frac{R[x_1, y_3, y_4]}{(x^n_1)},}
\end{equation}
where one checks from ~\eqref{eqn:Blow-up-3} that
\[
\alpha(x_1) = x_1, \ \alpha(x_2) = y_3y_4x_1, \  \alpha(x_3) = y_3x_1, \
\alpha(x_4) = y_4x_1, \ \alpha(y_3) = y_3 \ \ \mbox{and} \ \
\alpha(y_4) = y_4.
\]

If we replace $E_R$ by $\P^3_R$, then the coordinate rings of the
terms in ~\eqref{eqn:Blow-up-4} lead to a commutative diagram
\begin{equation}\label{eqn:Blow-up-6}
\xymatrix@C1pc{
R[x_1, \cdots , x_4] \ar[r] \ar[d] &
R[x_1, \cdots , x_4, y_2, y_3,y_4] \ar[r]^-{\alpha'} \ar[d] &
R[x_1, y_2, y_3, y_4] \ar[d] & R[y_2, y_3, y_4] \ar[l] \ar[dl] \\
\frac{R[x_1, \cdots , x_4]}{\fm^n_R} \ar[r] &
\frac{R[x_1, \cdots , x_4, y_2, y_3,y_4]}{\fm^n_R} 
\ar[r]^-{\ov{\alpha '}} & \frac{R[x_1, y_2,y_3, y_4]}{(x^n_1)},}
\end{equation}
where $\alpha'(x_1) = x_1, \alpha'(x_2) = y_2x_1, \alpha'(x_3) = y_3x_1$ 
and $\alpha'(x_4) = y_4x_1$.
The diagram ~\eqref{eqn:Blow-up-6} maps onto ~\eqref{eqn:Blow-up-5}
via the transformation $y_2 \mapsto y_3y_4$.

We shall use the following notations for various coordinate rings
over the open subset $\{y_1 \neq 0\}$ of $\P^3_R$.
We let 
\begin{equation}\label{eqn:Blow-up-7}
Q = {R[x_1, \cdots, x_4]}/{(x_1x_2-x_3x_4)}, \ 
Q_n = {R[x_1, \cdots, x_4]}/{(x_1x_2-x_3x_4, \fm^n_R)}, 
\end{equation}
\[
A = R[y_3, y_4], \ \ R_n = {R[x_1]}/{(x^n_1)} \ \ \mbox{and} \ \
A_n = {A[x_1]}/{(x^n_1)} = R_n[y_3, y_4]
\]
so that $\Spec(Q) = X_R$ and $\Spec(Q_n) = P_{n,R}$.
One checks using ~\eqref{eqn:Blow-up-5} and ~\eqref{eqn:Blow-up-6} that 
$A_n = {Q_n[y_3, y_4]}/{J_R}$, where
\[
J'_R = \Ker(\ov{\alpha'}) =
(x_2 -y_3y_4x_1, x_3 - y_3x_1, x_4 - y_4x_1, x_2y_3 - x_3y_2,
x_2y_4 - x_4y_2, x_3y_4-x_4y_3) \ \mbox{and} 
\]
\begin{equation}\label{eqn:H0-Omega-1}
J_R = \Ker({\ov{\alpha}}) = \frac{J'_R + (y_2-y_3y_4)}{(y_2-y_3y_4)} = 
(x_2 -y_3y_4x_1, x_3 - y_3x_1, x_4 - y_4x_1).
\end{equation}

\subsection{Cohomology of $E_R$}\label{sec:E-Coh}
We shall need to use the following result about the cohomology groups
on $E_R$ repeatedly in this text. Let $A_R = R[z_1, z_2] = \bigoplus_{i \ge 0}
A_{R,i}$ and $A'_R = R[z_3, z_4] = \bigoplus_{i \ge 0} A'_{R,i}$ be the homogeneous
coordinate rings of the two factors of $E_R$ and
let $\phi_R: R[M] \xrightarrow{\simeq} A \boxtimes_R A' = 
\bigoplus_{i \ge 0} A_{R,i} \otimes_R A'_{R,i}$. 

\begin{lem}\label{lem:Coh-main}
Let $R$ be a Noetherian regular ring containing $\Q$ and let $n \in \Z$. 
Then the following hold. 
\begin{enumerate}
\item
$\Omega^2_{{E_R}/R}(n) \simeq \sO_{E_R}(n-2)$. 
\item
$H^0(E_R, \sO_{E_R}(n)) \simeq A_{R,n} \otimes_R A'_{R,n}$.
In particular, $H^0(E_R, \sO_{E_R}(n)) = 0$ for $n < 0$. 
\item
$H^1(E_R, \sO_{E_R}(n)) = 0$.
\item
\[
H^2(E_R, \sO_{E_R}(n)) \simeq \left\{
\begin{array}{ll}
A_{R,2-n} \otimes_R A'_{R,2-n} & \mbox{if $n < -1$} \\
0 & \mbox{if $n \ge -1$.}
\end{array}
\right.
\]
\item
$H^0(E, \Omega^1_{{E_R}/{R}}(n)) \simeq (A_{n-2} \otimes_{R} A_n)^{\oplus 2}$.
\item
\[
\begin{array}{lll}
H^1(E_R, \Omega^1_{{E_R}/{R}}(n)) & \simeq & (A_{R,-n} \otimes_{R} A'_{R,n}) \oplus 
(A_{R,n-2} \otimes_{R} A'_{R,-2-n}) \\
& &  \hspace*{3cm} \oplus \\
& & (A_{R,-2-n} \otimes_{R} A'_{R,n-2}) \oplus (A_{R,n} \otimes_{R} A'_{R,-n}).
\end{array}
\]
\item
$H^2(E_R, \Omega^1_{{E_R}/R}(n)) \simeq (A_{R,-n}\otimes_{R} A'_{R,-2-n}) \oplus
(A_{R,-2-n} \otimes_{R} A'_{R,-n})$.
\item
$H^0(E_R, \Omega^2_{{E_R}/R}(n)) \simeq A_{R,n-2} \otimes_R A'_{R,n-2}$ and
$H^1(E_R, \Omega^2_{{E_R}/R}(n)) = 0$. 
\item
\[
H^2(E_R, \Omega^2_{{E_R}/R}(n)) \simeq \left\{
\begin{array}{ll}
A_{R,-n} \otimes_R A'_{R,-n} & \mbox{if $n \le 0$} \\
0 & \mbox{if $n > 0$.}
\end{array}
\right.
\]
\end{enumerate}
\end{lem}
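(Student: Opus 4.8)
The plan is to reduce every item to the cohomology of line bundles on the two $\P^1$-factors, combined with the K\"unneth formula. Write $E_R = \P^1_R\times_R\P^1_R$ with factors $\P^1_R = \Proj_R(A_R)$ and $\P^1_R = \Proj_R(A'_R)$, and let $\pr_1,\pr_2$ be the two projections. Since the polarization $\sO_{E_R}(1)$ is the one coming from the Segre embedding into $\P^3_R$, it is the external product $\sO_{\P^1}(1)\boxtimes\sO_{\P^1}(1)$, so that $\sO_{E_R}(n)\simeq\sO_{\P^1}(n)\boxtimes\sO_{\P^1}(n)$ for every $n$; write $\sO(a,b)=\sO_{\P^1}(a)\boxtimes\sO_{\P^1}(b)$ in general. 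The first step is to record the two bundle identities $\Omega^1_{E_R/R}\simeq\sO(-2,0)\oplus\sO(0,-2)$ (from $\Omega^1_{E_R/R}=\pr_1^*\Omega^1_{\P^1/R}\oplus\pr_2^*\Omega^1_{\P^1/R}$ together with $\Omega^1_{\P^1/R}\simeq\sO_{\P^1}(-2)$) and $\Omega^2_{E_R/R}=\det\Omega^1_{E_R/R}\simeq\sO(-2,-2)$. The latter yields part (1) immediately, since $\Omega^2_{E_R/R}(n)\simeq\sO(n-2,n-2)\simeq\sO_{E_R}(n-2)$.

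Second, I would compute the cohomology of the bundles $\sO_{\P^1}(m)$ on $\P^1_R$ directly by \v{C}ech cohomology on the standard two-chart cover. This gives $H^0(\P^1_R,\sO(m))\simeq A_{R,m}$ (the degree-$m$ graded piece, vanishing for $m<0$), $H^1(\P^1_R,\sO(m))=0$ for $m\geq -1$ and a free $R$-module of rank $-m-1$ for $m\leq -2$ which we identify with $A_{R,-2-m}$, and $H^i=0$ for $i\geq 2$; the same holds with $A'_R$. The reason for doing this by hand, rather than quoting relative Serre duality over $\Spec R$, is that it exhibits each cohomology module as a \emph{free} $R$-module with an explicit monomial basis, and freeness is exactly what is needed next.

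Third, because all of these $\P^1$-cohomology modules are free, the K\"unneth formula for $E_R = \P^1_R\times_R\P^1_R$ carries no higher $\Tor$-correction terms and reads
\[
H^k(E_R,\;\sF\boxtimes\sG)\;\simeq\;\bigoplus_{p+q=k}H^p(\P^1_R,\sF)\otimes_R H^q(\P^1_R,\sG).
\]
Applying this to $\sO(a,b)$ expresses $H^\bullet(E_R,\sO(a,b))$ as a bigraded sum of tensor products of the groups from the second step. Feeding in $\sO(n,n)$ produces parts (2)--(4); feeding in $\Omega^1_{E_R/R}(n)=\sO(n-2,n)\oplus\sO(n,n-2)$ produces parts (5)--(7); and feeding in $\Omega^2_{E_R/R}(n)=\sO(n-2,n-2)$ produces parts (8)--(9). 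The vanishing assertions in (3) and in the second half of (8) deserve one remark: each relevant $H^1(E_R,\sO(a,a))$ is a sum of terms $H^0(\sO(a))\otimes_R H^1(\sO(a))$, and $H^0(\sO(a))$ and $H^1(\sO(a))$ are never simultaneously nonzero (the first needs $a\geq 0$, the second $a\leq -2$), so this $H^1$ vanishes identically.

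I do not anticipate a deep obstacle; the content is essentially bookkeeping. The one genuine point requiring care is justifying the K\"unneth isomorphism over the non-field base $R$, that is, the absence of base-change and $\Tor$ corrections, and this is handled precisely by the freeness established in the second step (with $R$ Noetherian guaranteeing coherence throughout). A secondary, purely clerical, hazard is keeping the two factors $A_R$ and $A'_R$ and their indices straight, since $H^1$ of a negative twist contributes a module of rank $-m-1$ that must be rewritten in the $A_{R,\ast}$-notation; I would simply tabulate, for each of $\sO(n,n)$, $\sO(n-2,n)$, $\sO(n,n-2)$ and $\sO(n-2,n-2)$, the four possible K\"unneth contributions and read off the nonzero ones.
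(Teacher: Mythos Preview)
Your proposal is correct and follows essentially the same approach as the paper: identify $\sO_{E_R}(n)$, $\Omega^1_{E_R/R}(n)$, and $\Omega^2_{E_R/R}(n)$ with direct sums of external products of line bundles on the two $\P^1$-factors, and then apply the K\"unneth formula together with the known cohomology of $\sO_{\P^1}(m)$. The only difference is in how the base ring is handled: the paper first reduces to $R=\Q$ via flat base change (citing \cite{Kemf}) and then applies K\"unneth over a field, whereas you work directly over $R$ and justify K\"unneth there by the freeness of the $\P^1$-cohomology modules; both routes are valid and yield the same bookkeeping.
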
 
\begin{proof} 
The key point in this proof (and elsewhere in the text) is the observation
that our monoid algebra $R[M]$ is the base change of the monoid algebra
$\Q[M]$ via the inclusion $\Q \subset R$. Furthermore, it is evident
from ~\eqref{eqn:Main-diagram} that all schemes in the diagram
as well as all squares are obtained by the (flat) 
base change of a similar set of schemes and squares over $\Q$. 
Since $\Omega^i_{{E_{\Q}}/{\Q}}(n) \otimes_{\Q} R \xrightarrow{\simeq}
\Omega^i_{{E_R}/R}(n)$ for all $i \ge 0$ and $n \in \Z$, we can apply
\cite[Theorem~12]{Kemf} to reduce the proof 
to the case when $R = \Q$. 
We therefore assume this to be the case and drop the subscript $\Q$ from all 
notations. 

If $p_1, p_2: E \to \P^1$ denote two projections, then one knows
that $\sO_{E}(n) \simeq p^*_1(\sO_{\P^1}(n)) \otimes_{E} p^*_2(\sO_{\P^1}(n)) 
:= \sO_{\P^1}(n) \boxtimes \sO_{\P^1}(n) := \sO_E(n,n)$.
We also know that $\Omega^1_{{E}/{\Q}} \simeq (\Omega^1_{{\P^1}/{\Q}} \boxtimes 
\sO_{\P^1}) \oplus (\sO_{\P^1} \boxtimes \Omega^1_{{\P^1}/{\Q}})$.
In particular, we get 
\[
\Omega^1_{{E}/{\Q}}(n) \simeq 
(\Omega^1_{{\P^1}/{\Q}}(n) \boxtimes \sO_{\P^1}(n)) \oplus (\sO_{\P^1}(n) 
\boxtimes \Omega^1_{{\P^1}/{\Q}}(n)) \simeq \sO_E(n-2, n) \oplus \sO_E(n, n-2).
\]
We similarly get $\Omega^2_{{E}/{\Q}}(n) \simeq \sO_E(n-2)$
and $\Omega^{> 2}_{{E}/{\Q}}(n) = 0$, which proves (1).

The proof of (2), (3) and (4) now follows immediately from 
the K{\"u}nneth formula \cite[Theorems~13, 14]{Kemf} and 
\cite[Theorem~5.1]{Hart}.
Another application of the  K{\"u}nneth formula gives
$H^0(E, \Omega^1_{{E}/{\Q}}(n)) \simeq 
(A_{n-2} \otimes_{\Q} A'_n) \oplus (A_n \otimes_{\Q} A'_{n-2}) 
\simeq (A_{n-2} \otimes_{\Q} A_n)^{\oplus 2}$.
Next we have
\[
\begin{array}{lll}
H^1(E, \sO_E(n-2,n)) & \simeq &
H^1(\P^1, \sO_{\P^1}(n-2)) \otimes_{\Q} H^0(\P^1, \sO_{\P^1}(n)) \\
& & \hspace*{3cm} \oplus  \\
& & H^0(\P^1, \sO_{\P^1}(n-2)) \otimes_{\Q} H^1(\P^1, \sO_{\P^1}(n)) \\
& \simeq & (A_{-n} \otimes_{\Q} A'_{n}) \oplus (A_{n-2} \otimes_{\Q} A'_{-2-n}).
\end{array}
\]

Similarly, we have $H^1(E, \sO_E(n,n-2)) \simeq (A_{-2-n} \otimes_{\Q} A'_{n-2})
\oplus (A_n \otimes_{\Q} A'_{-n})$.
We conclude that
\[
H^1(E, \Omega^1_{{E}/{\Q}}(n)) \simeq (A_{-n} \otimes_{\Q} A'_{n}) \oplus 
(A_{n-2} \otimes_{\Q} A'_{-2-n}) \oplus (A_{-2-n} \otimes_{\Q} A'_{n-2})
\oplus (A_n \otimes_{\Q} A'_{-n}).
\]
\[
H^2(E, \Omega^1_{{E}/{\Q}}(n)) \simeq (A_{-n}\otimes_{\Q} A'_{-2-n}) \oplus
(A_{-2-n} \otimes_{\Q} A'_{-n}).
\]
We have thus proven (5), (6) and (7). The last two assertions
follow from (1) $\sim$ (4). 
\end{proof}

One can check by an elementary calculation, using ~\eqref{eqn:GMA-0} and
\lemref{lem:Coh-main}, that the map
$H^0(\P^3_R, \sO_{\P^3_R}(n)) \to H^0(E_R, \sO_{E_R}(n))$ is surjective
for all $n \ge 0$. In view of \cite[Exc.~II.5.14]{Hart},
this gives a geometric proof of the fact that $R[M]$ is normal.
Combining \lemref{lem:Coh-main} with \cite[Lemma~1.3]{Krishna-1},
we subsequently conclude that $R[M]$ is also Cohen-Macaulay.

\section{Vanishing theorems}\label{sec:R-reg}
Let $k$ be a field of characteristic zero. 
Let $k[M]$ be Gubeladze's monoid algebra and consider the 
diagram~\eqref{eqn:Main-diagram}, where we drop the subscript $k$ from all
notations. But we shall continue to use the subscript $R$ if it is 
different from $k$.

One of our main tools to study the algebraic $K$-theory of $R[M]$
(for a $k$-algebra $R$) will be the use of Chern class maps from $K$-theory to 
Hochschild and cyclic homology. We refer the reader to \cite{Loday} for 
details on this subject. For any subring $l \subset k$ and any map 
$A \to B$ of commutative $k$-algebras, 
Loday also defines the relative Hochschild homology $HH^l_*(A, B)$ over $l$
as the homology groups of the chain complex ${\rm Cone}(C_{\bullet}(A) \to
C_{\bullet}(B))[-1]$, where $C_{\bullet}(A)$ denotes the Hochschild complex
of $A$ over $l$. 
For an ideal $I$ of $A$, $HH^l_*(A, I)$ is the relative
Hochschild homology of $A$ and $A/I$.

Given an ideal $I \subset A$ such that
$I = IB$, one defines the double relative Hochschild homology
$HH^l(A,B,I)$ as the homology of the complex 
${\rm Cone}(C_{\bullet}(A,I) \to C_{\bullet}(B,I))[-1]$ over $l$. 
The relative and double relative 
cyclic homology are defined in a similar way by taking the cones over the total 
cyclic complexes. These are denoted by $HC^l_*(A,I)$ and
$HC^l_*(A, B,I)$, respectively. Recall from \cite[\S~4.5, 4.6]{Loday}
that the Hochschild and cyclic homology as well as their relative and double
relative companions have $\lambda$-decomposition in terms of the
Andr{\'e}-Quillen homology and the de Rham cohomology.

There are Chern class maps (Dennis trace maps) \cite[8.4.3]{Loday}
$K_i(A) \to HH^l_i(A)$ and by functoriality of 
fibrations of $K$-theory spectra and Hochschild homology, one also has Chern 
class maps from relative $K$-theory to relative Hochschild homology which are 
compatible with long exact sequence of relative $K$-theory and Hochschild 
homology. These Chern classes induce similar Chern character maps from
relative (for nilpotent ideals) and double relative $K$-theory 
to the corresponding cyclic homology.

The Hochschild and cyclic homology
(and their relative and double relative companions)
are defined for any $X \in \Sch_k$ and they coincide with the above definitions 
for affine schemes (see, for instance, \cite{Weibel-1}).
We let ${{\sH}{\sH}}^l_{i, X}$ (resp. ${{\sH}{\sC}}^l_{i, X}$) denote the Zariski 
sheaf on $X$ associated to the 
presheaf $U \mapsto HH^l_i(U)$ (resp. $U \mapsto HC^l_i(U)$). 
We define the sheaves of relative 
and double relative cyclic homology in analogous way. As just remarked, the
stalks of these sheaves are the (relative, double relative) 
Hochschild and cyclic homology of the associated local rings. 
The functoriality of the
$\lambda$-decomposition gives rise to the corresponding decomposition
for the sheaves as well \cite{Weibel-1}. 
The resulting Hodge pieces of these sheaves will be 
denoted by ${{\sH}{\sH}}^{l, (j)}_{i, X}$ and ${{\sH}{\sC}}^{l, (j)}_{i, X}$ 
(resp. ${{\sH}{\sH}}^{l, (j)}_{i, (X,W)}$,
${{\sH}{\sH}}^{l, (j)}_{i, (X,W,Z)}$ and ${{\sH}{\sC}}^{l, (j)}_{i, (X,W)}$,
${{\sH}{\sC}}^{l, (j)}_{i, (X,W,Z)}$).

Given $X \in \Sch_k$, we shall denote its Andre-Quillen homology sheaves
relative to $l \subset k$ by $\sD^{(q)}_p(X/l)$, where
$p, q \ge 0$. We shall freely use various standard facts about 
Andr{\'e}-Quillen, Hochschild and cyclic homology of rings 
without a specific reference. They can all be found in \cite{Loday}.

In this paper, all Hochschild, cyclic homology and sheaves of differential
forms without the mention of the base ring 
$l \subset k$ will be assumed to be considered over $l = \Q$. 

\subsection{$K$-theory, Hochschild and cyclic homology relative to $R$}
\label{sec:Not-R}
Let $R$ be a regular $k$-algebra and set $S = \Spec(R)$.
In this paper, we shall use the following notations to take care of
various (co)homology and homotopy groups relative to the $k$-algebra $R$.

Given a presheaf $\sF$ of abelian groups on $\Sch_k$, we let 
${{_R}\sF}$ denote the sheaf on 
${\Sch_k}/{\zar}$ associated to the presheaf $X \mapsto \sF(X \times_k S)$.
If $\sF$ is a presheaf of spectra, we define the presheaf of spectra ${{_R}\sF}$
in a similar way. The idea behind the introduction of the sheaves 
${{_R}\sF}$ is that it often allows us to compute various functors (e.g.,
$K$-theory, Hochschild homology and cyclic homology) on $R$-algebras in terms
of sheaf cohomology on $\Sch_k$ (e.g., see Theorems~\ref{thm:TT-gen}
and ~\ref{thm:MT-General}).

Given a morphism $f:X \to Y$ in $\Sch_k$, the relative
$K$-theory spectrum ${_RK(Y,X)}$ is defined to be 
the homotopy fiber of the map of spectra $f^*:{_RK(Y)} \to {_RK(X)}$.
Given any $X \in \Sch_k$, we shall let ${_R\wt{K}(X)}$ denote the relative
$K$-theory spectrum ${{_R}K(\Spec(k), X)}$. 
Similarly, the relative Hochschild homology ${_RHH_*(A,B)}$ will be the
homology of ${\rm Cone}(C_{\bullet}(A\otimes_k R) \to
C_{\bullet}(B \otimes_k R))[-1]$ over the base field $\Q$.

Associated to the commutative diagram~\eqref{eqn:Main-diagram}, we shall
use the following notations. For any closed subscheme $Z \subset V$ containing
$E$, we shall denote the relative $K$-theory $_RK(Z,E)$ by $_R\wt{K}(Z)$.
We shall use similar notations for the relative $K$-theory, Hochschild and
cyclic homology sheaves on $Z$. The relative $K$-theory (resp. Hochschild and
cyclic homology) $_RK_*(P_n, P)$ (resp. $_RHH_*(P_n, P)$ and 
$_RHC_*(P_n, P)$) will be denoted by $_R\wt{K}_*(P_n)$ 
(resp. $_R\wt{HH}_*(P_n)$ and $_R\wt{HC}_*(P_n)$). 
We shall write $\Ker({_R\Omega^*_{Z}} \to {_R\Omega^*_{E}})$ as
${_R\wt{\Omega}^*_{Z}}$. The notation ${_R\wt{\Omega}^*_{P_n}}$ will
denote $\Ker({_R\Omega^*_{P_n}} \to {_R\Omega^*_{P}})$.
These are all over the base field $\Q$.

Given an affine map $f: X \to Y$ in $\Sch_k$, we let 
${_R\sHH^{(j)}_{*, X/Y}}$ be the Zariski sheaf on $Y$ whose value at any 
affine open $U = \Spec(A) \subset Y$ is $HH^{A \otimes_k R, (j)}_*(B \otimes_k R)$,
where $\Spec(B) = f^{-1}(U)$. We define the relative cyclic and 
Andr{\'e}-Quillen homology sheaves and the sheaves of relative differential 
forms ${_R\sHC^{(j)}_{*, X/Y}}, {_R\sD^{(q)}_p(X/Y)}$ and
${_R\Omega^*_{X/Y}}$ in an analogous way.
In the above notations, we shall often drop the subscript $R$ if $R = k$.

\subsection{Pro-objects in abelian categories}\label{sec:Pro-ab}
By a pro-object in an abelian category $\sC$, we shall mean a sequence 
$\{A_1 \xleftarrow{\alpha_1} A_2 \xleftarrow{\alpha_2} \cdots \}$ of objects 
in $\sC$. It will be formally denoted by $\{A_i\}_i$. 
A morphism $f: \{A_i\}_i \to \{B_j\}_j$ in the category ${\rm pro}\sC$ of
pro-objects in $\sC$ is an element of the set
${\underset{j}\varprojlim} {\underset{i}\varinjlim} \
\Hom_{\sC}(A_i, B_j)$. 
In particular, such a morphism $f$ is 
same as giving a function $\lambda: \N^{+} \to \N^{+}$ and a morphism
$f_i : A_{\lambda(i)} \to B_{i}$ in $\sC$ for each $i \ge 1$
such that for any $j \ge i$, there is some $l \ge \lambda(i),  \lambda(j)$
so that the diagram
\begin{equation}\label{eqn:Ind-Obj}
\xymatrix@C1pc{
A_l \ar[r] \ar[dr] & A_{\lambda(j)} \ar[r]^-{f_j} &  B_j \ar[d] \\
& A_{\lambda(i)} \ar[r]_-{f_i} & B_i}
\end{equation}
commutes in $\sC$.
We shall call such a morphism to be {\sl strict} if  
$\lambda(j) \ge \lambda(i)$ and $l = \lambda(j)$ for every $j \ge i$.
In this paper, we shall use only the strict morphisms.
Moreover, except in ~\eqref{eqn:ext-d}, the function $\lambda$ will actually be
identity.
It is known that ${\rm pro}\sC$ is an abelian category.
The following description of kernels and cokernels in ${\rm pro}\sC$
is elementary.

\begin{lem}\label{lem:Pro-ker-Co}
Given a strict morphism $f: \{A_i\}_i \to \{B_i\}_i$ in ${\rm pro}\sC$,
one has ${\rm Ker}(f) = \{{\rm Ker}(f_i)\}_i$
and ${\rm Coker}(f) = \{{\rm Coker}(f_i)\}_i$.
In particular, a sequence of strict morphisms 
\begin{equation}\label{eqn:pro-exact}
\{A_i\}_i \xrightarrow{\lambda}
\{B_i\}_i \xrightarrow{\gamma} \{C_i\}_i 
\end{equation}
is exact in ${\rm pro}\sC$ if the sequence
$A_{\lambda \circ \gamma (i)} \to B_{\gamma(i)} \to C_i$
is exact for every $i \ge 1$. 
\end{lem}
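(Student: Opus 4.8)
The plan is to reduce to \emph{level} morphisms and then read off both kernel and cokernel by Yoneda, using the two exactness principles for abelian groups. First I would exploit the defining formula
\[
\Hom_{{\rm pro}\sC}(\{T_s\}_s, \{A_i\}_i) = \varprojlim_i \varinjlim_s \Hom_{\sC}(T_s, A_i)
\]
together with the observation that a strict morphism $f$ with reindexing function $\lambda$ becomes a level morphism after replacing $\{A_i\}_i$ by the cofinally reindexed, and hence isomorphic, pro-object $\{A_{\lambda(i)}\}_i$. Thus for the first assertion I may assume $\lambda = {\rm id}$, so that $f$ is given by maps $f_i \colon A_i \to B_i$ commuting with the structure maps $\alpha_i \colon A_{i+1} \to A_i$ and $\beta_i \colon B_{i+1} \to B_i$.

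This commutation, $f_i \alpha_i = \beta_i f_{i+1}$, immediately shows that $\alpha_i$ carries $\Ker(f_{i+1})$ into $\Ker(f_i)$ and that $\beta_i$ induces a map ${\rm Coker}(f_{i+1}) \to {\rm Coker}(f_i)$; hence $\{\Ker(f_i)\}_i$ and $\{{\rm Coker}(f_i)\}_i$ are pro-objects, and the level inclusions and projections assemble into strict morphisms $\iota$ and $q$ with $f\iota = 0 = qf$. To see that $\iota$ realizes the kernel, I would apply $\Hom_{{\rm pro}\sC}(T,-)$ and use that this functor is left exact on $\sC$ (it is the filtered colimit $\varinjlim_s \Hom_{\sC}(T_s,-)$ of left-exact functors, and filtered colimits are exact) together with the left-exactness of $\varprojlim_i$, obtaining
\[
\Ker\big(\Hom_{{\rm pro}\sC}(T,\{A_i\}) \to \Hom_{{\rm pro}\sC}(T,\{B_i\})\big) = \varprojlim_i \Hom_{{\rm pro}\sC}(T, \Ker(f_i)) = \Hom_{{\rm pro}\sC}(T, \{\Ker(f_i)\}_i)
\]
naturally in $T$. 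Dually, to realize $q$ as the cokernel I would apply the contravariant $\Hom_{{\rm pro}\sC}(-,T)$ for $T \in \sC$, where $\Hom_{{\rm pro}\sC}(\{B_i\}, T) = \varinjlim_i \Hom_{\sC}(B_i, T)$; the crucial input here is that filtered colimits are \emph{exact}, so the kernel of $f^{*}$ may be taken inside the colimit and equals $\varinjlim_i \Hom_{\sC}({\rm Coker}(f_i), T) = \Hom_{{\rm pro}\sC}(\{{\rm Coker}(f_i)\}_i, T)$, and one passes to a general $T = \{T_s\}_s$ using the left-exactness of $\varprojlim_s$.

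For the exactness criterion I would not reindex the shared middle term but instead compare subobjects of $\{B_i\}_i$ directly. Exactness of $\{A_i\} \xrightarrow{\lambda} \{B_i\} \xrightarrow{\gamma} \{C_i\}$ at the middle means $\im(\lambda) = \Ker(\gamma)$. By the kernel formula applied to $\gamma$ (whose components are $\gamma_i \colon B_{\gamma(i)} \to C_i$) we get $\Ker(\gamma) = \{\Ker(\gamma_i) \subseteq B_{\gamma(i)}\}_i$, while $\im(\lambda) = \Ker\big(\{B_i\} \xrightarrow{q} {\rm Coker}(\lambda)\big) = \{\im(\lambda_i) \subseteq B_i\}_i$. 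Reindexing the latter along the monotone cofinal function $\gamma$ produces the isomorphic sub-pro-object $\{\im(\lambda_{\gamma(i)}) \subseteq B_{\gamma(i)}\}_i$, and the hypothesis that $A_{\lambda\gamma(i)} \to B_{\gamma(i)} \to C_i$ is exact for every $i$ is exactly the equality $\im(\lambda_{\gamma(i)}) = \Ker(\gamma_i)$ inside $B_{\gamma(i)}$. Hence the two subobjects of $\{B_i\}_i$ coincide and the sequence is exact in ${\rm pro}\sC$.

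The step I expect to be the main obstacle is keeping the reindexing honest. For a single morphism this is harmless because one may absorb $\lambda$ by a cofinal reindexing, but in the exactness criterion the two maps share the term $\{B_i\}_i$ and cannot be reindexed independently; the delicate point is to verify that reindexing a sub-pro-object along a monotone cofinal function leaves it unchanged as a subobject, so that the hypothesis, stated over the shifted indices $\gamma(i)$ and $\lambda\gamma(i)$, lines up the image and the kernel on the nose. Apart from this bookkeeping, the only non-formal ingredient is the exactness of filtered colimits, which is precisely what makes the cokernel---a colimit-type construction---computable level-wise.
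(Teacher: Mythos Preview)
Your argument is correct and is essentially the standard one: reduce to level systems by cofinal reindexing, identify the kernel by testing with $\Hom_{{\rm pro}\sC}(T,-)$ (left exactness of filtered colimits plus of $\varprojlim$), identify the cokernel by testing with $\Hom_{{\rm pro}\sC}(-,T)$ (exactness of filtered colimits in $\Ab$), and then match image and kernel after a common reindexing for the exactness criterion. You also correctly flag the one genuine subtlety, namely that the reindexing functions must be monotone and cofinal so that $\{A_{\lambda(i)}\}_i \simeq \{A_i\}_i$ and the bookkeeping in the exactness step is legitimate.

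The paper, however, does not give a proof at all: immediately after the lemma it simply writes ``We refer the reader to \cite[Appendix~4]{AM} for these facts about pro-objects in abelian categories.'' So there is no argument to compare against; your write-up is a self-contained justification of a result the authors treat as folklore and cite out. If anything, your version is more informative than what the paper supplies, since it isolates exactly which exactness properties of $\varinjlim$ and $\varprojlim$ are being used and where the cofinality of the reindexing enters.
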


We refer the reader to \cite[Appendix~4]{AM} for these facts about pro-objects
in abelian categories.

\subsection{Some vanishing results}\label{sec:Key-vanish}
In this section, we prove some vanishing theorems for the
cohomology of relative cyclic homology sheaves. Given a quasi-coherent
sheaf $\sF$ on $V$ (resp. on $E_{n+1}$), we shall denote the sheaf
$\sF \otimes_V \sO_{E_n}$ (resp. $\sF \otimes_{E_{n+1}} \sO_{E_n}$) by
$\sF|_{E_n}$. We shall write ${\rm Ker} ({_R\Omega^m_{{E_{n+1}}/l}} \to 
{_R\Omega^m_{{E_{n}}/l}})$ as ${_R\Omega^m_{(E_{n+1}, E_n)/l}}$ for any
subring $l \subset k$.
These notations will be used throughout the text.
We shall use the following elementary computations for exterior products.

\begin{lem}\label{lem:Elm-alg-0}
Let $A$ be a ring and let 
\begin{equation}\label{eqn:Elm-alg-1}
0 \to M' \to M \to M'' \to 0
\end{equation}
be a short exact sequence of $A$-modules, and let $m \ge 1$ be an integer.
Then, there exists a finite filtration $\{F^{\bullet}{\wedge^m} M\}$ of
$\wedge^m M$ by $A$-submodules and a surjection
$\wedge^{i}M' \otimes_A \wedge^{m-i}M'' 
\surj {F^i\wedge^m M}/{F^{i+1}\wedge^m M}$. This surjection is an
isomorphism if ~\eqref{eqn:Elm-alg-1} splits.
\end{lem}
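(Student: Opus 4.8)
The plan is to build the filtration by the standard ``$M'$-degree'' grading on the exterior algebra and then analyze the associated graded pieces. Concretely, I would define $F^i\wedge^m M$ to be the image of the $A$-linear multiplication map
\[
\mu_i : \wedge^i M' \otimes_A \wedge^{m-i} M \to \wedge^m M, \qquad \omega \otimes \eta \mapsto \iota(\omega) \wedge \eta,
\]
where $\iota: M' \to M$ is the inclusion and $\iota(\omega)$ is the image of $\omega$ under the induced map $\wedge^i M' \to \wedge^i M$. By convention $F^0\wedge^m M = \wedge^m M$ and $F^{m+1}\wedge^m M = 0$. First I would check that $\{F^i\wedge^m M\}_{i\ge 0}$ is a descending filtration by $A$-submodules: the containment $F^{i+1}\wedge^m M \subseteq F^i\wedge^m M$ follows because the map $\wedge^{i+1}M' \otimes \wedge^{m-i-1}M \to \wedge^i M' \otimes \wedge^{m-i}M$ that pushes one $M'$-factor into $M$ via $\iota$ intertwines $\mu_{i+1}$ with $\mu_i$. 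Since everything is generated in exterior degree one, $F^m\wedge^m M$ is the image of $\wedge^m M'$ and $F^{m+1}\wedge^m M = 0$, so the filtration is finite.

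Next I would construct the claimed surjection. The composite $\wedge^i M' \otimes \wedge^{m-i}M \xrightarrow{\mu_i} F^i\wedge^m M \surj (F^i\wedge^m M)/(F^{i+1}\wedge^m M)$ factors through $\wedge^i M' \otimes \wedge^{m-i}M''$. Indeed, the kernel of the canonical surjection $\wedge^{m-i}M \surj \wedge^{m-i}M''$ is the submodule generated by wedges having at least one factor in $\iota(M')$, and multiplying such an element by a class in $\wedge^i M'$ yields a wedge with at least $i+1$ factors drawn from $M'$, hence an element of $F^{i+1}\wedge^m M$. This produces a well-defined $A$-linear map $\wedge^i M' \otimes_A \wedge^{m-i}M'' \to (F^i\wedge^m M)/(F^{i+1}\wedge^m M)$, and it is surjective because $F^i\wedge^m M$ is by definition generated by the image of $\mu_i$.

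For the split case, a splitting $M \simeq M' \oplus M''$ gives the classical decomposition $\wedge^m M \simeq \bigoplus_{j=0}^{m} \wedge^j M' \otimes_A \wedge^{m-j}M''$, under which $F^i\wedge^m M$ is identified with $\bigoplus_{j \ge i} \wedge^j M' \otimes_A \wedge^{m-j}M''$. The associated graded in degree $i$ is then exactly $\wedge^i M' \otimes_A \wedge^{m-i}M''$, and the surjection above becomes the identity, hence an isomorphism.

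The only point demanding real care -- and the only place where the hypotheses genuinely matter -- is that without a splitting (equivalently, without flatness of $M''$) there is no reason for the associated-graded comparison map to be injective, since $\mu_i$ need not stay injective after passing to $M''$. The main obstacle is therefore not any hard computation but a matter of discipline: I would phrase the entire argument through generators of $F^i\wedge^m M$ and avoid any appeal to exactness of $\otimes$, so that exactly the surjection survives in general and upgrades to an isomorphism precisely when the sequence splits. This is a standard fact about exterior powers (see Bourbaki, \emph{Alg\`ebre}, Ch.~III), which I would cite rather than reprove the split decomposition in full.
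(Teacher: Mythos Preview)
Your proof is correct and follows essentially the same approach as the paper: the filtration $F^i\wedge^m M$ you define as the image of $\mu_i$ coincides with the paper's submodule generated by wedges having at least $i$ factors in $M'$, and the factorization through $\wedge^i M' \otimes_A \wedge^{m-i}M''$ is argued in the same way. The paper likewise leaves the split case as a routine verification, so your citation of Bourbaki is perfectly adequate.
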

\begin{proof}
We can define a decreasing finite filtration 
on ${\wedge}^m M$ by defining $F^j {\wedge}^m M$ to be the $A$-submodule
generated by the forms of the type
\[\left\{a_1 \wedge \cdots \wedge a_r | a_{i_1}, \cdots , a_{i_j} \in
M' \ {\rm for \ some} \ 1 \le i_1 \le \cdots \le i_j \le m \right\}.\]
Then we have
\[
{\wedge}^m M = F^0 {\wedge}^m M \supseteq \cdots \supseteq
F^m {\wedge}^m M \supseteq F^{m+1} {\wedge}^m M = 0
\] 
and it is easy to check that  for $0 \le j \le m$, the map
\[
{\beta}^j_i : {\wedge}^j M' {\otimes}_A {\wedge}^{m-j} M \to
F^j {\wedge}^m M,
\]
\[
{\beta}^j_i \left((a_1 \wedge \cdots \wedge a_j) \otimes
(b_1 \wedge \cdots \wedge b_{m-j})\right) =
a_1 \wedge \cdots \wedge a_m \wedge b_1 \wedge \cdots \wedge b_{m-j}\]
descends to a surjective map of quotients
\begin{equation}\label{eqn:descend} 
{\beta}^j_i :
{\wedge}^j M' {\otimes}_A {\wedge}^{m-j} M'' \surj
\frac {F^j {\wedge}^m M} {F^{j+1} {\wedge}^m M}.
\end{equation}
One also checks easily that this map is an isomorphism
if $M \simeq M' \oplus M''$. We leave out the details 
as an exercise.
\end{proof}

\begin{lem}\label{lem:ext-pro}
Let $A$ be a ring and let $f:\{M'_n\}_n \to \{M_n\}_n$ be a map
of pro-$A$-modules, induced by a compatible system of surjective
maps $f_n:M'_n \surj M_n$ of $A$-modules. Assume that $f$ is an isomorphism.
Then the induced map $\wedge^r f: \{{\wedge}^rM'_n\}_n \to 
\{{\wedge}^rM_n\}_n$ is also an isomorphism for every $r \ge 1$.
\end{lem}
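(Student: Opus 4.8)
The plan is to reduce the claim to a pro-vanishing of kernels and to control those kernels through the filtration of \lemref{lem:Elm-alg-0}. Write $\beta_n\colon M'_{n+1}\to M'_n$ for the transition maps of $\{M'_n\}_n$, set $K_n=\Ker(f_n)$, and let $\kappa_n\colon K_{n+1}\to K_n$ and $\mu_n\colon M_{n+1}\to M_n$ be the maps induced by $\beta_n$ on kernels and quotients. Since each $f_n$ is surjective, \lemref{lem:Pro-ker-Co} gives ${\rm Coker}(f)=\{{\rm Coker}(f_n)\}_n=0$ and $\Ker(f)=\{K_n\}_n$; as $f$ is assumed to be an isomorphism, the pro-module $\{K_n\}_n$ is zero, which means concretely that for every $n$ there is some $m\ge n$ for which the composite transition map $K_m\to K_n$ vanishes. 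The same surjectivity makes each $\wedge^r f_n$ surjective, so again by \lemref{lem:Pro-ker-Co} we have ${\rm Coker}(\wedge^r f)=0$, and it remains only to prove that $\Ker(\wedge^r f)=\{\Ker(\wedge^r f_n)\}_n$ is the zero pro-module.

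To identify $\Ker(\wedge^r f_n)$, I would apply \lemref{lem:Elm-alg-0} to the short exact sequence $0\to K_n\to M'_n\xrightarrow{f_n}M_n\to 0$ with $m=r$. This produces a finite decreasing filtration $\wedge^r M'_n=F^0\supseteq F^1\supseteq\cdots\supseteq F^{r+1}=0$. By the standard right-exactness of exterior powers, $\wedge^r f_n$ is surjective with kernel exactly the submodule $F^1\wedge^r M'_n$ generated by all wedges having at least one factor in $K_n$ (this is the bottom of the filtration of \lemref{lem:Elm-alg-0}), so $\Ker(\wedge^r f_n)=F^1\wedge^r M'_n$. Since $\beta_n$ carries $K_{n+1}$ into $K_n$, the map $\wedge^r\beta_n$ preserves the filtration, and hence $\{F^\bullet\wedge^r M'_n\}_n$ is a filtration by pro-submodules. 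By \lemref{lem:Elm-alg-0}, for $1\le i\le r$ each graded piece $F^i/F^{i+1}$ is a quotient of $\wedge^i K_n\otimes_A\wedge^{r-i}M_n$, compatibly in $n$.

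Next I would show each of these pro-modules is zero. Given $n$, choose $m\ge n$ with $K_m\to K_n$ zero; then the induced map $\wedge^i K_m\to\wedge^i K_n$ is $\wedge^i$ of a zero map, hence zero for every $i\ge 1$, and tensoring with $\wedge^{r-i}M_n$ preserves this vanishing. Thus $\{\wedge^i K_n\otimes_A\wedge^{r-i}M_n\}_n$ is pro-zero, and so is its quotient $\{F^i/F^{i+1}\}_n$, because a quotient of a zero object is zero. Since the category of pro-$A$-modules is abelian and a finite iterated extension of zero objects is zero, running up the filtration from $F^{r+1}=0$ shows that $\{F^1\wedge^r M'_n\}_n=\Ker(\wedge^r f)$ is zero. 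Combined with ${\rm Coker}(\wedge^r f)=0$, this proves that $\wedge^r f$ is an isomorphism.

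The only delicate point is that the sequences $0\to K_n\to M'_n\to M_n\to 0$ need not split, so the graded pieces are merely quotients of $\wedge^i K_n\otimes_A\wedge^{r-i}M_n$ rather than isomorphic to them; this is precisely the generality in which \lemref{lem:Elm-alg-0} is phrased, and it causes no trouble here because being pro-zero passes to quotients. The substantive input is the elementary observation that $\wedge^i$ (for $i\ge 1$) sends a zero map of modules to a zero map, which is exactly what converts the pro-vanishing of $\{K_n\}_n$ into the pro-vanishing of each $\{\wedge^i K_n\}_n$.
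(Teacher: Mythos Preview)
Your proof is correct and takes a different route from the paper's. You work directly with exterior powers, using the filtration of \lemref{lem:Elm-alg-0} on $\wedge^r M'_n$ to express $\Ker(\wedge^r f_n)=F^1\wedge^r M'_n$ as a finite iterated extension of quotients of $\wedge^i K_n\otimes_A\wedge^{r-i}M_n$, and then kill these one by one using that $\wedge^i$ of a zero map is zero for $i\ge 1$. The paper instead passes through tensor powers: from $0\to K_n\to M'_n\to M_n\to 0$ it extracts $0\to E^r_n\to T^r(M'_n)\to T^r(M_n)\to 0$, observes directly that $\{E^r_n\}_n=0$ (since each generating tensor has a factor in $K_n$), and then descends to exterior powers using that $\wedge^r$ is a natural retract of $T^r$ over a $\Q$-algebra (\cite[Lemma~2.2]{Krishna-2}). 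Your argument is more self-contained, since it avoids the external splitting result and never leaves exterior powers; the paper's argument is slightly shorter once one grants the retraction, but leans on the standing $\Q$-algebra hypothesis to make the symmetrization projector available.
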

\begin{proof}
Let $M''_n = \Ker(f_n)$ so that the sequence
\[
0 \to M''_n \to M'_n \to M_n \to 0
\]
is a short exact sequence of $A$-modules for every $n \ge 0$.
This yields a short exact sequence
\begin{equation}\label{eqn:ext-pro-0}
0 \to E^r_n \to T^r(M'_n) \to T^r(M_n) \to 0,
\end{equation}
where $T^r(-)$ denotes the functor of tensor power of $A$-modules
and $E^r_n$ is the submodule of $T^r(M'_n)$, generated by the
tensors $\{a_1 \otimes \cdots \otimes a_r| \ a_i \in
M''_n \ {\rm for \ some} \ 1 \le i \le r\}$.

Since $\{M''_n\}_n = 0$ by hypothesis, it is clear that $\{E^r_n\}_n = 0$
and this yields $\{T^r(M'_n)\}_n \xrightarrow{\simeq} \{T^r(M_n)\}_n$.
Since $\{\wedge^r(N_n)\}_n$ is a canonical direct summand of 
$\{T^r(N_n)\}_n$ for any pro-$A$-module $\{N_n\}_n$ (see 
\cite[Lemma~2.2]{Krishna-2}), the lemma follows.
\end{proof}

\begin{lem}\label{lem:Pro-trivial}
Let $k$ be a field of characteristic zero and let $Y \subset X$ be a closed
embedding of regular, Noetherian $k$-schemes of finite Krull dimension.
Then, $\{HC^{k,(i)}_n(rY, Y)\}_r = 0$ for $0 \le i < n$.
\end{lem}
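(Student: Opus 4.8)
The plan is to identify, in the range $i<n$, the relative cyclic homology $HC^{k,(i)}_n(rY,Y)$ with a relative de Rham cohomology group of the thickenings and then to kill it by a pro-version of the Poincar\'{e} lemma. First I would pass to the affine and then to the local situation: writing $A$ for a local ring of $X$ at a point of $Y$ and $\sI\subset A$ for the ideal of $Y$, the scheme $rY$ is $\Spec(A/\sI^r)$ and the transition maps of the pro-system are the canonical surjections $A/\sI^{r+1}\surj A/\sI^r$. Because $X$ and $Y$ are both regular and $\Char k=0$, the closed immersion $Y\inj X$ is a regular immersion, so locally $\sI$ is generated by a regular sequence $t_1,\dots,t_c$ forming part of a regular system of parameters of $A$; this is the structure that will drive the homotopy below.

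Next I would invoke the $\lambda$-decomposition of cyclic homology \cite{Loday}. For any $\Q$-algebra $B$ the Hodge piece $HC^{k,(i)}_n(B)$ is the hypercohomology $\mathbb{H}^{2i-n}$ of the stupidly truncated (derived) de Rham complex $\sigma^{\le i}\Omega^{\bullet}_{B/k}$; for smooth $B$ this is the ordinary de Rham complex, recovering $H^{2i-n}_{\mathrm{dR}}(B)$ when $i<n$ and $\Omega^{n}_{B/k}/d\Omega^{n-1}_{B/k}$ when $i=n$. The key numerical point is that for $i<n$ the cohomological degree $2i-n$ is strictly smaller than $i$, so the truncation $\sigma^{\le i}$ is invisible and $HC^{k,(i)}_n(B)\cong \mathbb{H}^{2i-n}(\Omega^{\bullet}_{B/k})$ is honest (derived) de Rham cohomology. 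Applying this to the surjection $\sO_{rY}\surj\sO_{Y}$ and using that $\sO_Y$ is smooth, I would conclude that for $i<n$ the relative group $HC^{k,(i)}_n(rY,Y)$ is the relative de Rham cohomology $H^{2i-n}_{\mathrm{dR}}(rY,Y)$.

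The heart of the argument is then the pro-version of the formal Poincar\'{e} lemma: for the infinitesimal thickenings of the regular $Y$ the pro-system $\{H^{j}_{\mathrm{dR}}(rY,Y)\}_r$ vanishes for every $j$. In the model above this is proved by hand from the Euler contracting homotopy of the de Rham complex, whose definition divides by polynomial degrees and therefore exists only because $k\supset\Q$; reducing this homotopy modulo the powers $\sI^r$ and re-indexing the pro-system (comparing $\sI^{r'}$ with $\sI^{r}$ for $r'\gg r$) yields a pro-null-homotopy of the relative de Rham complex. Combined with the identification of the previous paragraph this gives $\{HC^{k,(i)}_n(rY,Y)\}_r=0$ for $i<n$. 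I expect the main obstacle to be making the de Rham identification precise over the singular thickenings $rY$: one must work with the derived de Rham complex and control the Andr\'{e}--Quillen corrections $\sD^{(q)}_p(rY/k)$, for which \lemref{lem:Elm-alg-0} and \lemref{lem:ext-pro} (comparison of exterior powers in ${\rm pro}\,\sC$) are the natural tools, and one must track the re-indexing carefully so that the contracting homotopy descends compatibly to all the $A/\sI^r$. As a cross-check that the differential is genuinely doing the work, note that the individual relative Hochschild pieces $HH^{k,(i)}_n(rY,Y)$ do not pro-vanish --- already $\{HH^{k,(1)}_2(\Spec k[t]/(t^r),\Spec k)\}_r\cong\{k[t]/(t^{r-1})\}_r\neq 0$ --- so any purely Hochschild-level argument must fail, and an alternative route via the $\lambda$-decomposed Connes $SBI$ sequence succeeds only because its connecting operator $B$ is the de Rham differential supplying exactly this pro-Poincar\'{e} cancellation.
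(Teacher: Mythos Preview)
The paper does not actually prove this lemma: its entire proof is a citation to \cite[Proposition~3.3]{KM}, together with the remark that the argument there (stated for $k=\Q$) goes through over any characteristic zero field because its two ingredients---Morrow's pro-vanishing of Andr\'e--Quillen homology \cite[Theorem~3.23]{Morrow-1} and the Hodge decomposition \cite[Corollary~9.9.3]{Weibel-2}---hold in that generality.

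Your proposal is a genuine sketch of the argument behind that citation, and it is correct in outline. The identification of $HC^{k,(i)}_n$ for $i<n$ with (derived) de Rham cohomology via the $\lambda$-decomposition is precisely the content of Weibel's result, and your pro-Poincar\'e lemma is what Morrow's theorem supplies: the pro-system $\{\sD^{(q)}_p((A/\sI^r)/k)\}_r$ vanishes for $p>0$, so in ${\rm pro}\,\sC$ the thickenings behave as if smooth, the derived de Rham complex collapses to the naive one, and the relative de Rham cohomology pro-vanishes. Your diagnosis of the obstacle---controlling the Andr\'e--Quillen corrections over the singular $rY$---is exactly right; the appropriate tool is Morrow's pro-excision machinery rather than \lemref{lem:Elm-alg-0} or \lemref{lem:ext-pro}, which are too coarse for this. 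Your closing remark that the individual $HH$-pieces do not pro-vanish is a good sanity check and confirms that the de Rham differential (equivalently, the $B$-operator in $SBI$) is essential.
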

\begin{proof}
This is \cite[Proposition~3.3]{KM}. The only point to be noted is
that this result is stated in the cited reference for $k = \Q$ case.
However, the proof works verbatim for any field $k$ containing $\Q$.
The reason is that it uses only \cite[Theorem~3.23]{Morrow-1}
and \cite[Corollary~9.9.3]{Weibel-2}, both of which are valid in the 
general situation.
\end{proof}

We now return to the study of our monoid algebra.
Since our monoid algebra is defined over $\Q$, we can write
\begin{equation}\label{eqn:vanish-cyclic-2-2}
{_R\Omega^m_{E_n}} \simeq \Omega^m_{E_n \otimes_k R}
\simeq \Omega^m_{E_{n, \Q} \otimes_{\Q} R}
\simeq \stackrel{m}{\underset{i = 0}\oplus} \Omega^i_{E_{n, \Q}} \otimes_{\Q} 
\Omega^{m-i}_R \simeq  \stackrel{m}{\underset{i = 0}\oplus}
 \Omega^i_{{E_n}/k} \otimes_{k} \Omega^{m-i}_R. 
\end{equation}
In particular, we have
\begin{equation}\label{eqn:vanish-cyclic-2-3} 
{_R\wt{\Omega}^i_{E_n}} = {\rm Ker}({_R\Omega^i_{E_n}} \surj 
{_R\Omega^i_{E}}) \simeq \stackrel{m}{\underset{i = 0}\oplus} 
\wt{\Omega}^i_{{E_n}/k} \otimes_{k} 
\Omega^{m-i}_R \ \simeq \
\stackrel{m}{\underset{i = 0}\oplus} \wt{\Omega}^i_{E_{n, \Q}} 
\otimes_{\Q} \Omega^{m-i}_R \ {\rm for} \ m \ge 0 \ {\rm and} \ n \ge 1.
\end{equation}

The following is our key lemma to prove some of the vanishing results. In
its proof (and elsewhere), we shall use the following notation: 
for any $k$-algebra $B$, ${B[v]dv}/{(v^m,v^ndv)}$ will denote 
${B[v]dv}/{(v^ndv)}\otimes_B {B[v]}/{(v^m)}$ for 
$m, n\geq 0$.

\begin{lem}\label{lem:Key}
For integers $m \ge 0$ and $n \ge 1$, there is a short exact sequence
\begin{equation}\label{eqn:Key-0}
0 \to \Omega^m_{E/k}(n) \to \Omega^m_{(E_{n+1}, E_n)/k} \to \Omega^{m-1}_{E/k}(n) 
\to 0.
\end{equation}
\end{lem}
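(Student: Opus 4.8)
The plan is to realize the restriction map $\Omega^m_{E_{n+1}/k}\to\Omega^m_{E_n/k}$, whose kernel is by definition $\Omega^m_{(E_{n+1},E_n)/k}$, as the middle vertical in a morphism between two short exact sequences, and then read off the three kernels via the snake lemma. The whole computation takes place on the line bundle $\wt{p}:V\to E$, in which $E_n\subset E_{n+1}$ are the infinitesimal thickenings cut out by $\sI^n\supset\sI^{n+1}$, and I will use repeatedly the identification $\sI^j/\sI^{j+1}\simeq\sO_E(j)$ recalled in Section~\ref{sec:Geom-RM}.

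First I would set up the cotangent sequence of the (non-smooth) projection $p:E_{n+1}\to E$. Working in local \'etale coordinates $t_1,\dots,t_d$ on $E$ together with a local fibre coordinate $v$ (so that $\sO_{E_{n+1}}$ is locally $\sO_E[v]/(v^{n+1})$ and $\sI=(v)$), the only relation imposed on $\Omega^1_{E_{n+1}/k}$ beyond those pulled back from $E$ is $d(v^{n+1})=(n+1)v^n\,dv=0$; since $\Char k=0$ this touches $dv$ only, so the forms $dt_i$ remain a free local basis of their span. Hence
\[
0\to p^*\Omega^1_{E/k}\to\Omega^1_{E_{n+1}/k}\to\Omega^1_{E_{n+1}/E}\to 0
\]
is exact and locally split, with $\Omega^1_{E_{n+1}/E}$ locally isomorphic to $\sO_{E_n}\,dv$, a rank-one free $\sO_{E_n}$-module. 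Applying \lemref{lem:Elm-alg-0} to this sequence and using that the rank-one quotient has vanishing higher exterior powers collapses the exterior-power filtration of $\Omega^m_{E_{n+1}/k}=\wedge^m\Omega^1_{E_{n+1}/k}$ to only two graded pieces, giving a globally short exact sequence
\[
0\to p^*\Omega^m_{E/k}\to\Omega^m_{E_{n+1}/k}\to p^*\Omega^{m-1}_{E/k}\otimes_{\sO_{E_{n+1}}}\Omega^1_{E_{n+1}/E}\to 0.
\]
The sub- and quotient-identifications hold globally because the inclusion is a locally split monomorphism (so stays injective after $\wedge^m$) and the quotient surjection of \lemref{lem:Elm-alg-0} is an isomorphism locally, hence an isomorphism of sheaves.

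Next I would write the analogous sequence for $E_n$ and map the $(n{+}1)$-level sequence to the $n$-level one by the three restriction maps, all of which are surjective; the snake lemma then yields $0\to\Ker(\text{left})\to\Omega^m_{(E_{n+1},E_n)/k}\to\Ker(\text{right})\to 0$, and it remains to compute the two outer kernels with their twists. On the left,
\[
\Ker\big(\Omega^m_{E/k}\otimes_{\sO_E}\sO_{E_{n+1}}\to\Omega^m_{E/k}\otimes_{\sO_E}\sO_{E_n}\big)=\Omega^m_{E/k}\otimes_{\sO_E}(\sI^n/\sI^{n+1})\simeq\Omega^m_{E/k}(n).
\]
On the right, the canonical derivation gives an isomorphism $\tfrac1n d:\sI^n/\sI^{n+1}\xrightarrow{\simeq}\Ker(\Omega^1_{E_{n+1}/E}\to\Omega^1_{E_n/E})$ (locally $v^n\mapsto v^{n-1}\,dv$), so that the right kernel is $p^*\Omega^{m-1}_{E/k}\otimes_{\sO_{E_{n+1}}}(\sI^n/\sI^{n+1})\simeq\Omega^{m-1}_{E/k}(n)$. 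Substituting these identifications produces exactly the claimed sequence.

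The main obstacle is the left-exactness (injectivity) in the cotangent sequence of the thickening $p:E_{n+1}\to E$, which fails for a general non-smooth morphism; here it is rescued by the very explicit local structure, the sole relation $d(v^{n+1})=0$ involving $dv$ alone, and this is precisely where $\Char k=0$ enters. The only other delicate point is the consistent bookkeeping of the Serre twists: the factor $dv$ contributes a shift by $\sO_E(1)$, so that on the $dv$-part the kernel picks up $\sI^{n-1}/\sI^n$ tensored with this shift, i.e. $\sO_E(n)$ rather than $\sO_E(n-1)$, matching the left-hand twist.
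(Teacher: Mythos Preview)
Your argument is correct, and it follows a genuinely different route from the paper's.  The paper attacks the problem through the conormal sequence of the closed immersion $E_n\hookrightarrow E_{n+1}$, i.e.\ through
\[
0\to\sI^n/\sI^{n+1}\xrightarrow{d}\Omega^1_{E_{n+1}/k}|_{E_n}\to\Omega^1_{E_n/k}\to 0,
\]
then takes exterior powers via \lemref{lem:Elm-alg-0} and analyses the resulting maps through two separate local claims (their Claims~1 and~2) to isolate the sub and quotient of $\Omega^m_{(E_{n+1},E_n)/k}$.  You instead use the relative cotangent sequence of the projection $p:E_{n+1}\to E$, whose quotient $\Omega^1_{E_{n+1}/E}$ is cyclic over $\sO_{E_{n+1}}$; this makes the exterior-power filtration collapse to two steps at once, so a single application of \lemref{lem:Elm-alg-0} followed by the snake lemma with the analogous sequence on $E_n$ finishes the job.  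Your approach is more economical and sidesteps the ad~hoc local verifications of the paper's Claims~1 and~2; the paper's route, on the other hand, stays closer to the infinitesimal geometry of the thickening and makes the appearance of $\sI^n/\sI^{n+1}$ (rather than a twist computed after the fact) explicit from the outset.  One small clarification: when you say the quotient is ``rank-one free over $\sO_{E_n}$'', the relevant fact for the collapse is that it is cyclic over $\sO_{E_{n+1}}$, which is indeed what forces $\wedge^{\ge 2}$ to vanish.
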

\begin{proof}
The $m = 0$ case immediately follows from the
exact sequence
\[
0 \to  {\sI^{n}}/{\sI^{n+1}} \to \sO_{E_{n+1}} \to \sO_{E_n} \to 0
\]
and the isomorphism ${\sI^{n}}/{\sI^{n+1}} \simeq \sO_E(n)$.

For $m \ge 1$, we consider
the exact sequence
\begin{equation}\label{eqn:vanish-omega-1-2} 
0 \to {\sI^{n}}/{\sI^{n+1}} \xrightarrow{d} \Omega^1_{{E_{n+1}}/k}|_{E_n} \to 
\Omega^1_{{E_n}/k} \to 0.
\end{equation}

We need to justify the injectivity of the map $d$.
But this is a local verification.  
We shall use the notations of ~\eqref{eqn:Blow-up-7} for this purpose 
(with $R = k$). We thus have $E = \Spec(A) = \Spec(k[y_3,y_4]), \ 
V = \Spec(A[x_1]), \ k_n = {k[x_1]}/{(x^n_1)}$ and 
$E_n = \Spec({k[x_1,y_3,y_4]}/{(x^n_1)})$. 
We then get
$\Omega^m_{{A_n}/k} \ \simeq \ \stackrel{m}{\underset{i = 0}\oplus} 
\Omega^i_{A/k} \otimes_{k} \Omega^{m-i}_{{k_n}/k}$.

We also have for $j \ge 0$:
\begin{equation}\label{eqn:vanish-omega-1-3} 
\begin{array}{lll}
(\Omega^i_{A/k} \otimes_{k} \Omega^{m-i}_{{k_n}/k}) \otimes_{A_n} (x^j_1)A_n
& \simeq & 
(\Omega^i_{A/k} \otimes_{k} \Omega^{m-i}_{{k_n}/k}) \otimes_{A_n} 
(A \otimes_k {(x^j_1)}/{(x^n_1)}) \\
& \simeq & (\Omega^i_{A/k} \otimes_{k} \Omega^{m-i}_{{k_n}/k}) \otimes_{A_n} 
(A \otimes_k (k_n \otimes_{k_n} {(x^j_1)}/{(x^n_1)})) \\
& \simeq & (\Omega^i_{A/k} \otimes_{k} \Omega^{m-i}_{{k_n}/k}) \otimes_{A_n}
(A_n \otimes_{k_n} {(x^j_1)}/{(x^n_1)}) \\ 
& \simeq & \Omega^i_{A/k} \otimes_{k} (\Omega^{m-i}_{{k_n}/k} 
\otimes_{k_n} {(x^j_1)}/{(x^n_1)}). \\
\end{array}
\end{equation}

Since the map ${(z^n)}/{(z^{n+1})} \xrightarrow{d} \Omega^1_{{k_{n+1}}/k}|_{k_n}
\simeq {k[z]dz}/{(z^{n}, z^ndz)}$ is clearly injective (here we use
characteristic zero), it follows that
the composite map $A \otimes_k {(z^n)}/{(z^{n+1})} \xrightarrow{d}
\Omega^1_{{A_{n+1}}/k}|_{A_n} \surj A \otimes_k \Omega^1_{{k_{n+1}}/k}|_{k_n}$ is 
injective.
This shows that the map $d$ in ~\eqref{eqn:vanish-omega-1-2} is injective.

Since ${\sI^{n}}/{\sI^{n+1}} \simeq \sO_E(n)$ is an invertible sheaf on $E$,
it follows from ~\lemref{lem:Elm-alg-0} that there exists an exact sequence
\begin{equation}\label{eqn:vanish-omega-1-4} 
\Omega^{m-1}_{{E_n}/k} \otimes_{E_n} \sO_E(n) \xrightarrow{id \wedge d} 
\Omega^{m}_{{E_{n+1}}/k}|_{E_n} \to \Omega^m_{{E_n}/k} \to 0.
\end{equation}

\noindent
{\bf Claim~1:}
The map $\id \wedge d$ factors as
$\Omega^{m-1}_{{E_n}/k} \otimes_{E_n} \sO_E(n) \surj \Omega^{m-1}_{E/k} \otimes_E 
\sO_E(n) \inj \Omega^{m}_{{E_{n+1}}/k}|_{E_n}$.

Using the local calculation above ~\eqref{eqn:vanish-omega-1-3}, we get
$\Omega^m_{{A_{n+1}}/k}|_{A_n} \ \simeq \ \stackrel{m}{\underset{i = 0}\oplus} 
\Omega^i_{A/k} \otimes_{k} (\Omega^{m-i}_{{k_{n+1}}/k}|_{k_n})$
and
$\Omega^{m-1}_{{A_n}/k} \otimes_{A_n} {(x^n_1)}/{(x^{n+1}_1)}A_n
\ \simeq \ \stackrel{m-1}{\underset{i = 0}\oplus} 
\Omega^i_{A/k} \otimes_{k} (\Omega^{m-1-i}_{{k_n}/k} 
\otimes_{k_n} {(x^n_1)}/{(x^{n+1}_1)})$.
We thus need to show that the map
${(x^n_1)}/{(x^{n+1}_1)} \xrightarrow{d} \Omega^{1}_{{k_{n+1}}/k}|_{k_n}$ is 
injective
and the map $\Omega^{m-1}_{{k_n}/k} \otimes_{k_n} {(x^n_1)}/{(x^{n+1}_1)}
\xrightarrow{id \wedge d} \Omega^{m}_{{k_{n+1}}/k}|_{k_n}$ is zero for all
$m \ge 2$. But the injectivity at $m = 1$ is already shown above,
and the vanishing at $m \ge 2$ follows immediately from the 
fact that $\Omega^m_{{k_n}/k} = 0$ for $m \ge 2, n \ge 1$. This proves the claim.

Using Claim~1 and the splitting of the inclusion $E \inj E_n$, we obtain
a short exact sequence for $m \ge 0$:
\begin{equation}\label{eqn:vanish-omega-1-5} 
0 \to \Omega^{m-1}_{E/k}(n)
\to \wt{\Omega}^{m}_{{E_{n+1}}/k}|_{E_n} \to \wt{\Omega}^m_{{E_n}/k} \to 0.
\end{equation}

Using this exact sequence, we get a commutative diagram with exact rows
\begin{equation}\label{eqn:vanish-omega-1-6} 
\xymatrix@C1pc{
0 \ar[r] & \Omega^m_{(E_{n+1}, E_n)/k} \ar[r] \ar@{->>}[d] &  
\wt{\Omega}^{m}_{{E_{n+1}}/k} 
\ar[r] \ar@{->>}[d] & \wt{\Omega}^m_{{E_n}/k} \ar[r] \ar@{=}[d] & 0 \\
0 \ar[r] & \Omega^{m-1}_{E/k}(n) \ar[r] & \wt{\Omega}^{m}_{{E_{n+1}}/k}|_{E_n} 
\ar[r] & \wt{\Omega}^m_{{E_n}/k} \ar[r] &  0.}
\end{equation}
This yields an exact sequence
\begin{equation}\label{eqn:vanish-omega-1-7} 
\Omega^m_{{E_{n+1}}/k} \otimes_{E_{n+1}} {\sI^n}/{\sI^{n+1}} \to
\Omega^m_{(E_{n+1}, E_n)/k} \to \Omega^{m-1}_{E/k}(n) \to 0.
\end{equation}

\noindent
{\bf Claim~2:} The map $\Omega^m_{{E_{n+1}}/k} \otimes_{E_{n+1}} 
{\sI^n}/{\sI^{n+1}} \to
\Omega^m_{(E_{n+1}, E_n)/k}$ has a factorization
\[
\Omega^m_{{E_{n+1}}/k} \otimes_{E_{n+1}} {\sI^n}/{\sI^{n+1}} \surj
\Omega^m_{E/k} \otimes_{E/k} {\sI^n}/{\sI^{n+1}} \inj \Omega^m_{(E_{n+1}, E_n)/k}.
\]

The case $m = 0$ is obvious and so we assume $m \ge 1$, where we can replace
$\Omega^m_{(E_{n+1}, E_n)/k}$ by $\Omega^m_{E_{n+1}/k}$.
We can check this locally. Using ~\eqref{eqn:vanish-omega-1-3}, we need to show
that the map $\Omega^{m}_{{k_{n+1}}/k} \otimes_{k_{n+1}} {(x^n_1)}/{(x^{n+1}_1)} \to 
\Omega^m_{{k_{n+1}}/k}$ is injective for $m = 0$ and zero for $m \ge 1$.
The injectivity for $m =0$ is obvious and we have
$\Omega^m_{{k_{n+1}}/k} = 0$ for $m \ge 2$. So we only need to check 
$m = 1$ case. Here, the map in question is the canonical map
$\frac{k[x_1]dx_1}{(x^{n+1}_1, x^n_1dx_1)} \otimes_{{k_{n+1}}} 
\frac{(x^n_1)}{(x^{n+1}_1)} \to 
\frac{k[x_1]dx_1}{(x^{n+1}_1, x^n_1dx_1)}$.
However, this is same as the map
$\frac{k[x_1]dx_1}{(x_1, x^n_1dx_1)} \otimes_k \frac{(x^n_1)}{(x^{n+1}_1)}  \to
\frac{k[x_1]dx_1}{(x^{n+1}_1, x^n_1dx_1)}$, which is turn, is same as
the map $\frac{(x^n_1)}{(x^{n+1}_1)} \to
\frac{k[x_1]dx_1}{(x^{n+1}_1, x^n_1dx_1)}$, given by $x^{n+i}_1 \mapsto 
x^{n+i}_1dx_1$. 
But this is clearly zero and we get Claim~2.
The lemma follows from Claim~2 and ~\eqref{eqn:vanish-omega-1-7}.
\end{proof}

\begin{lem}\label{lem:vanish-omega-1}
For any set of integers $m \ge 0$ and $i, n \ge 1$, we have 
$H^i(E_n, {_R\wt{\Omega}^m_{E_n}})= 0$.
\end{lem}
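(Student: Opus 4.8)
The plan is to reduce first to the case $R = k$ and then run an induction on $n$, the two exact sequences furnished by \lemref{lem:Key} and by \eqref{eqn:vanish-omega-1-6} providing the inductive mechanism, while \lemref{lem:Coh-main} supplies the crucial cohomology vanishing on $E$.

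For the reduction, I would invoke the decomposition \eqref{eqn:vanish-cyclic-2-3}, which identifies ${_R\wt{\Omega}^m_{E_n}}$ with $\bigoplus_{j=0}^m \wt{\Omega}^j_{{E_n}/k} \otimes_k \Omega^{m-j}_R$ as a sheaf on $E_n$. Since each $\Omega^{m-j}_R$ is a $k$-vector space, hence a filtered colimit of its finite-dimensional subspaces, tensoring with it is exact and commutes with sheaf cohomology on the Noetherian space $E_n$; thus $H^i(E_n, \wt{\Omega}^j_{{E_n}/k} \otimes_k \Omega^{m-j}_R) \simeq H^i(E_n, \wt{\Omega}^j_{{E_n}/k}) \otimes_k \Omega^{m-j}_R$. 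It therefore suffices to prove $H^i(E_n, \wt{\Omega}^m_{{E_n}/k}) = 0$ for all $i, n \ge 1$ and $m \ge 0$, which is the $R = k$ instance of the lemma.

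For this I would induct on $n$, noting throughout that all the relevant sheaves are coherent on the thickenings $E_n$, which share the underlying topological space of $E$, so cohomology may be computed on that common space. The base case $n = 1$ is trivial, as $E_1 = E$ forces $\wt{\Omega}^m_{{E_1}/k} = \Ker(\Omega^m_{E/k} \to \Omega^m_{E/k}) = 0$. For the step from $n$ to $n+1$ I would take the long exact cohomology sequence of the top row of \eqref{eqn:vanish-omega-1-6},
\[
0 \to \Omega^m_{(E_{n+1}, E_n)/k} \to \wt{\Omega}^m_{{E_{n+1}}/k} \to \wt{\Omega}^m_{{E_n}/k} \to 0,
\]
whose last term has vanishing higher cohomology by the inductive hypothesis, thereby reducing the claim to $H^i(\Omega^m_{(E_{n+1}, E_n)/k}) = 0$ for $i \ge 1$. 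Feeding in the exact sequence of \lemref{lem:Key},
\[
0 \to \Omega^m_{E/k}(n) \to \Omega^m_{(E_{n+1}, E_n)/k} \to \Omega^{m-1}_{E/k}(n) \to 0,
\]
this in turn follows once $H^i(E, \Omega^m_{E/k}(n)) = 0$ for every $i \ge 1$, $m \ge 0$ and $n \ge 1$.

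This last vanishing is the heart of the matter, and it is exactly here that the positivity $n \ge 1$ of the twist is decisive. I would read it off \lemref{lem:Coh-main}: for $n \ge 1$ every graded piece $A_{R,j}$ or $A'_{R,j}$ with a strictly negative index occurring in parts (3), (4), (6), (7), (8) and (9) vanishes, so $H^i(E, \sO_E(n))$, $H^i(E, \Omega^1_{{E}/k}(n))$ and $H^i(E, \Omega^2_{{E}/k}(n))$ are all zero for $i = 1, 2$, while $\Omega^m_{{E}/k} = 0$ for $m \ge 3$ since $E$ is a surface. In other words, all the potentially nonzero higher cohomology of twisted forms on $E$ disappears precisely in the range $n > 0$ produced by the blow-up filtration. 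Assembling the three long exact sequences then gives $H^i(E_{n+1}, \wt{\Omega}^m_{{E_{n+1}}/k}) = 0$ and closes the induction. I expect the only genuine obstacle to be the bookkeeping in this final step: one must verify that the twist index entering \lemref{lem:Key} is always $\ge 1$, so that \lemref{lem:Coh-main} lands in its vanishing range rather than in one of its nonzero cases.
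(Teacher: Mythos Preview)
Your proof is correct and follows essentially the same approach as the paper: reduce to the base field via the decomposition \eqref{eqn:vanish-cyclic-2-3}, induct on $n$ using the top row of \eqref{eqn:vanish-omega-1-6}, then apply \lemref{lem:Key} to reduce to the vanishing of $H^i(E,\Omega^m_{E/k}(n))$ for $i,n\ge 1$, which is read off from \lemref{lem:Coh-main}. The only cosmetic difference is that the paper reduces all the way to $R=\Q$ rather than $R=k$, and your justification of the base case $n=1$ (the sheaf is zero) is slightly more direct than the paper's appeal to \lemref{lem:Coh-main}.
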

\begin{proof}
By ~\eqref{eqn:vanish-cyclic-2-3}, the lemma is equivalent to showing
that $H^i(E_{n, \Q}, \wt{\Omega}^m_{E_{n, \Q}})= 0$ for $m \ge 0$ and 
$i, n \ge 1$.
So we can assume $k = R = \Q$. The statement is now clearly true for $n =1$
by \lemref{lem:Coh-main}.

For $n \ge 2$ and $i \ge 1$, we use the 
top row of ~\eqref{eqn:vanish-omega-1-6} and an induction on $n \ge 1$. 
This reduces us to showing that $H^i(E_n, \Omega^m_{(E_{n+1}, E_n)}) = 0$
for $m \ge 0$ and $i, n \ge 1$. Using \lemref{lem:Key}, it suffices to show that
$H^i(E, \Omega^m_E(n)) = 0$ for $m \ge 0$ and $i, n \ge 1$. 
But this follows from \lemref{lem:Coh-main}.
\end{proof}

\begin{lem}\label{lem:vanish-cyclic-1}
For any integers $m \ge 0$ and $i \ge 1$, 
we have $\{H^i(E_n, {_R\tHC_{m, E_n}})\}_n = 0$.
\end{lem}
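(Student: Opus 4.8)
The plan is to reduce the statement $\{H^i(E_n, {_R\tHC_{m, E_n}})\}_n = 0$ to the already-established vanishing for relative differential forms in \lemref{lem:vanish-omega-1}, using the $\lambda$-decomposition (Hodge decomposition) of relative cyclic homology. First I would recall that the reduced relative cyclic homology sheaf ${_R\tHC_{m, E_n}}$ carries a canonical Hodge decomposition ${_R\tHC_{m, E_n}} \simeq \bigoplus_{j} {_R\tHC^{(j)}_{m, E_n}}$, so it suffices to prove $\{H^i(E_n, {_R\tHC^{(j)}_{m, E_n}})\}_n = 0$ for each Hodge piece $j$. The advantage of passing to Hodge pieces is that each piece is governed by André--Quillen homology, and since $E_n$ is a thickening of the regular scheme $E$, the relevant André--Quillen complexes simplify: away from the exceptional structure the higher cotangent complex vanishes, so the Hodge pieces of relative cyclic homology are controlled by the sheaves of relative (reduced) differential forms ${_R\wt{\Omega}^{m'}_{E_n}}$ appearing in \lemref{lem:vanish-omega-1}.

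The key technical step is to exhibit, for each fixed $n$, a finite filtration (or an exact sequence coming from the SBI/Connes exact sequence relating Hochschild and cyclic homology) whose graded pieces are the reduced differential-form sheaves ${_R\wt{\Omega}^{m'}_{E_n}}$ for various $m' \le m$. Concretely, I would use the fact that for a map of schemes with one of them regular, the Hodge piece ${_R\tHC^{(j)}_{m, E_n}}$ is built from ${_R\wt{\Omega}^{2j-m}_{E_n}}$ together with relative de Rham cohomology contributions; since $E$ is regular and the construction is relative to $E$, the de Rham differentials and the André--Quillen corrections either vanish or contribute only through the $\wt{\Omega}$ sheaves. Once this is in place, the long exact cohomology sequences associated to these filtration steps, combined with the vanishing $H^i(E_n, {_R\wt{\Omega}^{m'}_{E_n}}) = 0$ for $i \ge 1$ from \lemref{lem:vanish-omega-1}, force $H^i(E_n, {_R\tHC^{(j)}_{m, E_n}}) = 0$ for every $i \ge 1$ and every $n$.

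Having the pointwise (in $n$) vanishing, the pro-statement follows formally: by \lemref{lem:Pro-ker-Co}, a pro-system of zero groups is zero in ${\rm pro}\,\sC$, so $\{H^i(E_n, {_R\tHC_{m, E_n}})\}_n = 0$. Here I would be careful to note that the vanishing actually holds level-wise for each $n$ rather than merely in the pro-category, which is more than enough; the transition maps $\alpha_n$ play no role since each individual term is already zero. As in the proof of \lemref{lem:vanish-omega-1}, I would first reduce to the case $R = k = \Q$ using the base-change isomorphisms of ~\eqref{eqn:vanish-cyclic-2-2} and ~\eqref{eqn:vanish-cyclic-2-3}, so that the cyclic-homology sheaves are obtained by flat base change from the $\Q$-situation and cohomology commutes with this base change.

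The main obstacle I anticipate is identifying precisely which differential-form sheaves arise as the graded pieces of the Hodge decomposition of the \emph{reduced} relative cyclic homology, and verifying that no stray terms appear that are not covered by \lemref{lem:vanish-omega-1}. In particular, the Connes periodicity operator $S$ and the de Rham cohomology contributions in the decomposition of $\tHC$ could in principle introduce cohomology groups $H^i(E_n, \wt{\Omega}^{m'}_{E_n})$ for indices $m'$ outside the expected range, or introduce $H^0$-type terms that then feed into higher $H^i$ via connecting maps. The careful bookkeeping of the Hodge degrees $j$ versus the homological degree $m$ — ensuring every graded piece really is one of the sheaves shown to have vanishing higher cohomology on each thickening $E_n$ — is where the argument requires the most attention.
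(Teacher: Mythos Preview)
Your overall strategy of using the Hodge decomposition and reducing to \lemref{lem:vanish-omega-1} matches the paper's approach, but there is a genuine gap in how you handle the Hodge pieces ${_R\tHC^{(j)}_{m,E_n}}$ for $j<m$.

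You claim that each Hodge piece admits a finite filtration whose graded pieces are sheaves ${_R\wt{\Omega}^{m'}_{E_n}}$, and that consequently the vanishing holds \emph{level-wise} for every $n$. This is not correct. For $j<m$, the Hochschild piece $\sHH^{(j)}_{m,E_n}$ is the Andr\'e--Quillen homology sheaf $\sD^{(j)}_{m-j}(E_n)$, and since $E_n$ is not regular (it is a non-reduced thickening of $E$), these do \emph{not} vanish and are \emph{not} expressible as extensions of the $\wt{\Omega}^{m'}_{E_n}$ alone. Indeed, later in the paper \lemref{lem:AQ-1} computes ${_R\sD_1(E_n/E)}\simeq p_*(\sI^{n+1}/\sI^{2n})\otimes_k R$, which is nonzero for each fixed $n$. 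So your assertion that ``the Andr\'e--Quillen corrections either vanish or contribute only through the $\wt{\Omega}$ sheaves'' fails, and the level-wise vanishing you assert for $j<m$ is unjustified.

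The paper bypasses this difficulty by invoking \lemref{lem:Pro-trivial}, which gives directly that $\{H^i(E_n,{_R\tHC^{(j)}_{m,E_n}})\}_n=0$ \emph{as a pro-system} for $0\le j\le m-1$; this is genuinely a pro-vanishing and not a level-wise one. Only for the top piece $j=m$ does the paper use the identification ${_R\tHC^{(m)}_{m,E_n}}\simeq {_R\wt{\Omega}^m_{E_n}}/d({_R\wt{\Omega}^{m-1}_{E_n}})$ and reduce to \lemref{lem:vanish-omega-1} via the right-exactness of $H^2$ on $E_n$; there the vanishing is indeed level-wise. You should therefore separate the two cases and invoke \lemref{lem:Pro-trivial} (which relies on the regularity of $E_R$) for the lower Hodge pieces rather than attempt a filtration argument that does not exist.
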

\begin{proof}
Using the Hodge decomposition of the cyclic homology sheaf, we have
a natural decomposition of Zariski sheaves
${_R\tHC_{0, E_n}} \simeq {\sI}/{\sI^n}\otimes_k R$ and
${_R\tHC_{m, E_n}} \simeq \ \stackrel{m}{\underset{j = 1}\oplus} 
{_R\tHC^{(j)}_{m, E_n}}$
for $m \ge 1$ (see \cite[Theorem~4.6.7]{Loday}).
In particular, we have $H^*(E_n, {_R\tHC_{0, E_n}}) \simeq 
H^*(E_n, {\sI}/{\sI^n})\otimes_k R$. 

We first prove the case $m = 0$. We show that
$H^i(E_n, {\sI}/{\sI^n}) = 0$ for all $i, n \ge 1$.
The statement is clearly true for $n =1$.
In general, we use induction and the short exact sequence
\begin{equation}\label{eqn:vanish-cyclic-2-0*}
0 \to \frac{\sI^{n}}{\sI^{n+1}} \to  \frac{\sI}{\sI^{n+1}} \to 
\frac{\sI}{\sI^{n}} \to 0.
\end{equation} 
This reduces the problem to showing that $H^i(E, \sO_E(n)) = 0$ for
all $i, n \ge 1$. But this follows from \lemref{lem:Coh-main}. 

When $m \ge 1$, it follows from \lemref{lem:Pro-trivial} that
$\{H^i(E_n, {_R\tHC^{(j)}_{m, E_n}})\}_n = 0$ for $1 \le j \le m-1$
and $i \ge 1$ (here we use the regularity of $R$). We thus have to show that
$\{H^i(E_n, {_R\tHC^{(m)}_{m, E_n}})\}_n = 0$ for $i \ge 1$. 

Using the isomorphism ${_R\sHC^{(m)}_{m, E_n}} \simeq 
\frac{{_R\Omega^m_{E_n}}}{d({_R\Omega^{m-1}_{E_n}})}$
(see \cite[Theorem~4.5.12]{Loday}) and the splitting of the inclusion
$E \subset E_n$, we get ${_R\tHC^{(m)}_{m, E_n}} \simeq
\frac{{_R\wt{\Omega}^m_{E_n}}}{d({_R\wt{\Omega}^{m-1}_{E_n}})}$.
Equivalently, there is an exact sequence
\begin{equation}\label{eqn:vanish-cyclic-1-0}
{_R\wt{\Omega}^{m-1}_{E_n}} \xrightarrow{d_{m-1}} {_R\wt{\Omega}^m_{E_n}} \to
{_R\tHC^{(m)}_{m, E_n}} \to 0.
\end{equation}

In particular, a diagram with exact row:
\begin{equation}\label{eqn:vanish-cyclic-1-1}
\xymatrix@C1pc{
& & H^2(E_n, {_R\wt{\Omega}^{m-1}_{E_n}}) \ar@{->>}[d] \\
H^1(E_n, {_R\wt{\Omega}^m_{E_n}}) \ar[r] & H^1(E_n, {_R\tHC^{(m)}_{m, E_n}}) \ar[r]
& H^2(E_n, d({_R\wt{\Omega}^{m-1}_{E_n}}))}  
\end{equation}
and a surjection $H^2(E_n, {_R\wt{\Omega}^m_{E_n}}) \surj
H^2(E_n, {_R\tHC^{(m)}_{m, E_n}})$. 

Since $H^2$-functor is right exact on the category of Zariski sheaves on
$E_n$, it suffices to show that
$H^i(E_n, {_R\wt{\Omega}^m_{E_n}}) = 0$ for each $m \ge 0$ and $i, n \ge 1$.
But this follows from \lemref{lem:vanish-omega-1}.
\end{proof}

\subsection{The map 
$H^0(E_n, {_R\wt{\Omega}^m_{E_n}}) \to 
H^0(E_n, {_R\tHC^{(m)}_{m, E_n}})$}\label{sec:Ker-d}
It is easy to check from the local description of the projection
$\wt{p}: V \to E$ that its restriction $p: E_n \to E$ is a finite
morphism. Let ${_R\Omega^m_{{E_n/E}}}$ be the Zariski sheaf on $E$
as defined in ~\S~\ref{sec:Not-R}.
It is clear from its definition that there is an exact sequence of
Zariski sheaves of $E$:
\begin{equation}\label{eqn:Ker-d-0}
{_R\Omega^1_{E}} \otimes_{E} p_*(\sO_{E_n}) \to p_*({_R\Omega^1_{E_n}}) \to
p_*({_R\Omega^1_{{E_n/E}}}) \to 0.
\end{equation}

We wish to show that the first arrow in this sequence is injective.
It is enough to check this locally on $E$.
Over an affine open subset of $E$ of the form $\Spec(k[y_3,y_4])$,
we let $A = R[y_3,y_4], \ B_n = {\Q[x_1]}/{(x^n_1)}$ and $\wt{B}_n =
{(x_1)}/{(x^n_1)}$.
The above exact sequence over this open subset is of the form
\begin{equation}\label{eqn:Ker-d-1}
\Omega^1_A \otimes_A A_n \to \Omega^1_{A_n} \to \Omega^1_{{A_n}/A} \to 0.
\end{equation}

On the other hand, we have $A_n = A \otimes_{\Q} B_n$ and hence
\begin{equation}\label{eqn:Ker-d-1*}
\Omega^m_{A_n} \simeq \stackrel{m}{\underset{i =0}\oplus}
\Omega^i_A \otimes_{\Q} \Omega^{m-i}_{B_n} \simeq
(\Omega^m_A \otimes_{\Q} B_n) \oplus (\Omega^{m-1}_A \otimes_{\Q} \Omega^1_{B_n})
\ \ \mbox{and}
\end{equation}
\[
\wt{\Omega}^m_{A_n} \simeq (\Omega^m_A \otimes_{\Q} \wt{B}_n) \oplus 
(\Omega^{m-1}_A \otimes_{\Q} \Omega^1_{B_n}).
\]

Using the identifications
$\Omega^1_{{A_n}/A} \simeq A \otimes_{\Q} \Omega^1_{B_n}$ and
$\Omega^1_A \otimes_A A_n \simeq \Omega^1_A \otimes_{\Q} B_n$, it follows
that ~\eqref{eqn:Ker-d-1}, and hence ~\eqref{eqn:Ker-d-0}
yields a short exact sequence
\begin{equation}\label{eqn:Ker-d-2}
0 \to {_R\Omega^1_{E}} \otimes_{E} p_*(\sO_{E_n}) \to p_*({_R\Omega^1_{E_n}}) \to
p_*({_R\Omega^1_{{E_n/E}}}) \to 0.
\end{equation}

In particular, we get a locally split short exact sequence
\begin{equation}\label{eqn:Ker-d-3}
0 \to {_R\Omega^1_{E}} \otimes_{E} p_*({\sI}/{\sI^n}) \to 
p_*({_R\wt{\Omega}^1_{E_n}}) \to p_*({_R\Omega^1_{{E_n/E}}}) \to 0.
\end{equation}

We let $d_m: {_R\wt{\Omega}^m_{E_n}} \to {_R\wt{\Omega}^{m+1}_{E_n}}$ 
be the differential map over $\Q$ and let 
${_R\wt{\Omega}^{m, m+1}_{E_n}} = \Ker(d_m)$.
Using that $p_*$ is left exact on the category of sheaves of abelian groups,
it follows from ~\eqref{eqn:Ker-d-1*} that there is an
exact sequence
\begin{equation}\label{eqn:Ker-d-4}
0 \to {\Ker}({_R\Omega^1_{E}} \otimes_{E} p_*({\sI}/{\sI^n}) \to 
p_*({_R\wt{\Omega}^2_{E_n}})) \to p_*({_R\wt{\Omega}^{1, 2}_{E_n}}) \to 
p_*({_R\Omega^1_{{E_n/E}}}) \to 0.
\end{equation}

\vskip .3cm

\noindent
{\bf Claim~1:} The map $ p_*({_R\wt{\Omega}^{1, 2}_{E_n}}) \to 
p_*({_R\Omega^1_{{E_n/E}}})$ is an isomorphism. 

This is equivalent to the first term of ~\eqref{eqn:Ker-d-4}
being zero. We can check this locally, and in this case,
we need to show that the map
$\Omega^{1}_A \otimes_{\Q} \wt{B}_n \to 
(\Omega^2_A \otimes_{\Q} \wt{B}_n) \oplus 
(\Omega^1_A \otimes_{\Q} {\Omega}^1_{B_n})$ is injective. 
But this follows from the fact that the composite map
$\Omega^{1}_A \otimes_{\Q} \wt{B}_n \to 
(\Omega^2_A \otimes_{\Q} \wt{B}_n) \oplus 
(\Omega^1_A \otimes_{\Q} {\Omega}^1_{B_n}) \to 
\Omega^1_A \otimes_{\Q} {\Omega}^1_{B_n}$ is an isomorphism.
This proves the claim.

\vskip .3cm

\noindent
{\bf Claim~2:} For $m \ge 2$, the map
${_R\Omega^{m-1}_E} \otimes_E  p_*({\sI}/{\sI^n}) \xrightarrow{d_{m-1}}
p_*({_R\wt{\Omega}^{m, m+1}_{E_n}})$ is an isomorphism.

Using the left exactness of $p_*$, it suffices to show that
the map 
${_R\Omega^{m-1}_E} \otimes_E  p_*({\sI}/{\sI^n}) \xrightarrow{d_{m-1}}
\Ker(p_*({_R\wt{\Omega}^{m}_{E_n}}) \to p_*({_R\wt{\Omega}^{m+1}_{E_n}}))$
is an isomorphism. We can again check this locally on $E$. With respect to the
local description of ~\eqref{eqn:Ker-d-1*}, we need to check that the
map
$(\Omega^m_A \otimes_{\Q} \wt{B}_n) \oplus 
(\Omega^{m-1}_A \otimes_{\Q} \Omega^1_{B_n})
\xrightarrow{d_{m}} (\Omega^{m+1}_A \otimes_{\Q} \wt{B}_n) \oplus 
(\Omega^{m}_A \otimes_{\Q} \Omega^1_{B_n})$ has kernel 
$\Omega^{m-1}_A \otimes_{\Q} \wt{B}_n$. 

First of all, the composite map
$\Omega^{m-1}_A \otimes_{\Q} \wt{B}_n \xrightarrow{d_{m-1}}
(\Omega^m_A \otimes_{\Q} \wt{B}_n) \oplus (\Omega^{m-1}_A \otimes_{\Q} 
\Omega^1_{B_n}) \to (\Omega^{m-1}_A \otimes_{\Q} \Omega^1_{B_n})$ is an 
isomorphism. It follows that the first map is injective.

We next let $\alpha$ be the composite of $d_m$ with the projection
$(\Omega^{m+1}_A \otimes_{\Q} \wt{B}_n) \oplus 
(\Omega^{m}_A \otimes_{\Q} \Omega^1_{B_n}) \to \Omega^{m+1}_A \otimes_{\Q} 
\wt{B}_n$. 
We then have 
\begin{equation}\label{eqn:Ker-d-5}
\begin{array}{lll}
\Ker(d_m) & = & \Ker(\Ker(\alpha) \to (\Omega^{m+1}_A \otimes_{\Q} \wt{B}_n)
\oplus (\Omega^{m}_A \otimes_{\Q} \Omega^1_{B_n}))\\
& = & \Ker((\Omega^{m,m+1}_A \otimes_{\Q} \wt{B}_n) \oplus
(\Omega^{m-1}_A \otimes_{\Q} \Omega^1_{B_n})  \xrightarrow{d_{m}} 
\Omega^{m}_A \otimes_{\Q} \Omega^1_{B_n}). 
\end{array}
\end{equation} 

We now let $F^m = (\Omega^{m,m+1}_A \otimes_{\Q} \wt{B}_n) \oplus
(\Omega^{m-1}_A \otimes_{\Q} \Omega^1_{B_n})$ and consider the commutative diagram
\begin{equation}\label{eqn:Ker-d-6}
\xymatrix@C1pc{
& & \Omega^{m-1}_A \otimes_{\Q} \wt{B}_n \ar[dr]^{id \otimes d_0} 
\ar[d]^{d_{m-1}} & & \\
0 \ar[r] & \Omega^{m,m+1}_A \otimes_{\Q} \wt{B}_n
\ar[r] \ar@{^{(}->}[d]_{id \otimes d_0} & F^m \ar[r] \ar[d]^{d_m} & 
\Omega^{m-1}_A \otimes_{\Q} \Omega^1_{B_n} \ar[r] & 0 \\
& \Omega^{m}_A \otimes_{\Q} {\Omega}^1_{B_n} \ar@{=}[r] &
\Omega^{m}_A \otimes_{\Q} {\Omega}^1_{B_n}. & &}
\end{equation}

Given any $w \in \Omega^{m-1}_A$ and $v \in \Omega^1_{B_n}$, we have
$d_{m}(w \otimes v) = d_m(w) \otimes v = d_m(w) \otimes d_0(v')$ 
for some $v' \in \wt{B}_n$.
Since $d_m(w) \in \Omega^{m,m+1}_A$, it follows that
the map $\Omega^{m-1}_A \otimes_{\Q} \Omega^1_{B_n} \to 
{\rm Coker}(id \otimes d_0)$ is zero. Since the diagonal arrow 
$id \otimes d_0$ on the top right is an isomorphism, we conclude that
the middle vertical sequence is exact. This proves the claim.

\vskip .3cm

The final result of this section is the following.

\begin{lem}\label{lem:surjection-H0}
For any integer $m \ge 0$, the map
$H^0(E_n, {_R\wt{\Omega}^m_{E_n}}) \to H^0(E_n, {_R\tHC^{(m)}_{m, E_n}})$ is 
surjective.
\end{lem}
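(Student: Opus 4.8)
The plan is to exhibit the target as the $H^0$ of a cokernel sheaf and to kill the only obstruction to lifting its global sections, which is an $H^1$ of the image of the de Rham differential. Throughout I would use that $p \colon E_n \to E$ is finite, so that $p_*$ is exact and $H^i(E_n, -) = H^i(E, p_*(-))$, and carry out all computations on $E$, where \lemref{lem:Coh-main} is available. Concretely, I would rewrite \eqref{eqn:vanish-cyclic-1-0} as a short exact sequence of Zariski sheaves $0 \to d({_R\wt{\Omega}^{m-1}_{E_n}}) \to {_R\wt{\Omega}^m_{E_n}} \to {_R\tHC^{(m)}_{m, E_n}} \to 0$, the left term being the image of $d_{m-1}$. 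The long exact cohomology sequence then shows that the map in the statement is surjective as soon as $H^1(E_n, d({_R\wt{\Omega}^{m-1}_{E_n}})) = 0$. The case $m = 0$ is immediate, since there ${_R\tHC^{(0)}_{0, E_n}} = {_R\wt{\Omega}^0_{E_n}}$ and the map is the identity, so it remains to treat $m \ge 1$.

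For $m = 1$, I would first check that $d_0$ is injective as a map of sheaves. Using the local splitting \eqref{eqn:Ker-d-1*}, the component of $d_0$ landing in $A \otimes_{\Q} \Omega^1_{B_n}$ is $\id_A \otimes d_{B_n}$, which is injective in characteristic zero because $A$ is flat over $\Q$ and the map $d \colon \wt{B}_n \to \Omega^1_{B_n}$ is injective (exactly as used in the proof of \lemref{lem:Key}). Hence $d({_R\wt{\Omega}^0_{E_n}}) \cong {_R\wt{\Omega}^0_{E_n}}$, and its first cohomology vanishes by \lemref{lem:vanish-omega-1}.

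For $m \ge 2$, I would invoke Claim~2. Since the image of $d_{m-1}$ always lies inside the closed forms ${_R\wt{\Omega}^{m,m+1}_{E_n}}$, while already the restriction of $d_{m-1}$ to ${_R\Omega^{m-1}_E} \otimes_E p_*(\sI/\sI^n)$ maps isomorphically onto $p_*({_R\wt{\Omega}^{m,m+1}_{E_n}})$, one obtains $p_*(d({_R\wt{\Omega}^{m-1}_{E_n}})) = p_*({_R\wt{\Omega}^{m,m+1}_{E_n}}) \cong {_R\Omega^{m-1}_E} \otimes_E p_*(\sI/\sI^n)$. Filtering $p_*(\sI/\sI^n)$ by the $\sI$-adic filtration, whose graded pieces are $\sO_E(j)$ for $1 \le j \le n-1$, and using the K{\"u}nneth-type decomposition as in \eqref{eqn:vanish-cyclic-2-2}, I would reduce to showing $H^1(E, \Omega^i_{E/k}(j)) = 0$ for all $i \ge 0$ and $j \ge 1$. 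This holds by \lemref{lem:Coh-main}, parts (3), (6) (all four summands acquire a negative index once $j \ge 1$) and (8), together with $\Omega^{\ge 3}_{E/k} = 0$. Hence $H^1(E_n, d({_R\wt{\Omega}^{m-1}_{E_n}})) = 0$ and surjectivity follows.

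The step I expect to be the main obstacle is the case $m \ge 2$: one must be certain that Claim~2, which computes the sheaf of closed forms, actually computes the \emph{full} image of $d_{m-1}$ after applying $p_*$. This is precisely what the isomorphism of Claim~2 delivers, once one records that the image of $d_{m-1}$ is always contained in the closed forms; the rest is bookkeeping of the indices in \lemref{lem:Coh-main} to confirm the vanishing for every $j \ge 1$.
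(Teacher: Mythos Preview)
Your argument is correct. Both you and the paper reduce the surjectivity to the single obstruction $H^1(E_n, d({_R\wt{\Omega}^{m-1}_{E_n}})) = 0$, but you kill it differently. The paper breaks $d_{m-1}$ into kernel and image, uses \lemref{lem:vanish-omega-1} on the domain to obtain an injection $H^1(\mathrm{Im}\,d_{m-1}) \hookrightarrow H^2(\Ker d_{m-1})$, and then applies Claim~1 (for $m=2$) and Claim~2 (for $m\ge 3$) to identify $\Ker d_{m-1}$ and show its $H^2$ vanishes via \lemref{lem:Coh-main}. You instead identify $\mathrm{Im}\,d_{m-1}$ directly: for $m=1$ via injectivity of $d_0$, and for $m\ge 2$ by observing that Claim~2 already forces $\mathrm{Im}\,d_{m-1} = \Ker d_m \cong {_R\Omega^{m-1}_E}\otimes_E p_*(\sI/\sI^n)$, whose $H^1$ vanishes by the same filtration argument and \lemref{lem:Coh-main}. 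Your route is a touch more economical in that it never invokes Claim~1 and needs only $H^1$-vanishing rather than $H^2$-vanishing; the paper's route, on the other hand, keeps the argument uniform in $m$ by always passing through the kernel side. Either way the same ingredients (\lemref{lem:vanish-omega-1}, Claim~2, \lemref{lem:Coh-main}) carry the weight.
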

\begin{proof}
The lemma is equivalent to showing that the map
\begin{equation}\label{eqn:surjection-H0-0}
H^0(E, p_*({_R\wt{\Omega}^m_{E_n}})) \to H^0(E, p_*({_R\tHC^{(m)}_{m, E_n}}))
\end{equation}
is surjective.
The stalk of $p_*({_R\wt{\Omega}^m_{E_n}})$ at any point $P' \in E$
is $\Ker(\Omega^m_{R\otimes_{k} \sO_{E_n, P'}} \to \Omega^m_{R\otimes_{k} \sO_{E, P'}})$.
Similarly, the stalk of ${_R\tHC^{(m)}_{m, E_n}}$ at $P'$ is
$\Ker(HC^{(m)}_{m}(R\otimes_{k} \sO_{E_n, P'}) \to 
HC^{(m)}_{m}(R\otimes_{k} \sO_{E, P'}))$.
It follows from ~\eqref{eqn:vanish-cyclic-1-0} that the sequence
\[
\wt{\Omega}^{m-1}_{R\otimes_{k} \sO_{E_n, P'}} 
\xrightarrow{d_{m-1}} \wt{\Omega}^{m}_{R\otimes_{k} \sO_{E_n, P'}} 
\to \wt{HC}^{(m)}_{m}(R\otimes_{k} \sO_{E_n, P'}) \to 0
\]
is exact.
We conclude from this that there is an exact sequence of sheaves on $E$:
\begin{equation}\label{eqn:surjection-H0-1}
p_*({_R\wt{\Omega}^{m-1}_{E_n}}) \xrightarrow{d_{m-1}} 
p_*({_R\wt{\Omega}^{m}_{E_n}}) \xrightarrow{\ov{d_{m-1}}} 
p_*({_R\tHC^{(m)}_{m, E_n}}) \to 0.
\end{equation}

Using ~\eqref{eqn:surjection-H0-1}, \lemref{lem:vanish-omega-1}
and the exactness of $p_*$ on the category of quasi-coherent sheaves,
the associated sequence of cohomology groups yields
a commutative diagram
\begin{equation}\label{eqn:extra-explanation}
\xymatrix@C1pc{
H^0(E, p_*({_R\wt{\Omega}^m_{E_n}})) \ar[r]^{\ov{d_{m-1}}} &
H^0(E, p_*({_R\tHC^{(m)}_{m, E_n}})) \ar[r] & H^1(E, {\rm Ker}(\ov{d_{m-1}}))
\ar@{^{(}->}[d] \\
& & H^2(E, {\rm Ker}(d_{m-1})),}
\end{equation}
where the top row is exact.
It suffices therefore to show that $H^2(E, \Ker(d_{m-1})) = 0$.
Since $\Ker(d_0) = 0$ (as one can check from ~\eqref{eqn:Ker-d-1*}), it suffices
to show that $H^2(E, \Ker(d_{m})) = 0$ for all $m \ge 1$.

If $m =1$, it follows from Claim~1 and ~\eqref{eqn:Ker-d-3}
that $H^2(E, p_*({_R\wt{\Omega}^{1}_{E_n}})) \surj 
H^2(E, \Ker(d_{m}))$. We thus conclude from \lemref{lem:vanish-omega-1}.

If $m \ge 2$, we reduce the problem to showing that
$H^2(E, {_R\Omega^{m-1}_E} \otimes_E  p_*({\sI}/{\sI^n})) = 0$ in view of 
Claim~2.
We have a filtration 
\begin{equation}\label{eqn:extra}
0 \subset {\sI^{n-1}}/{\sI^n} \subset \cdots \subset {\sI^2}/{\sI^n} 
\subset {\sI}/{\sI^n}.
\end{equation}

Since ${_R\Omega^{m-1}_E}$ is a locally free $\sO_E$-module (of
possibly infinite rank), we get a similar filtration of
${_R\Omega^{m-1}_E} \otimes_E  p_*({\sI}/{\sI^n})$ after
tensoring the above filtration with ${_R\Omega^{m-1}_E}$.
Using this filtration and induction on $n \ge 1$, it suffices to show that
$H^2(E, {_R\Omega^{m-1}_E}(n)) = 0$ for every $n \ge 1$.
However, we have $H^2(E, {_R\Omega^{m-1}_E}(n)) \simeq 
H^2(E, \Omega^{m-1}_E(n)) \otimes_k R$ and $H^2(E, \Omega^{m-1}_E(n)) = 0$
for $n \ge 1$ by \lemref{lem:Coh-main}.
This finishes the proof.
\end{proof}

\begin{cor}\label{cor:surjection-H0-ext}
For any integer $m \ge 0$, the map
$H^0(E_n, {_R\wt{\Omega}^m_{{E_n}/k}}) \to H^0(E_n, {_R\tHC^{k,(m)}_{m, E_n}})$ is 
surjective.
\end{cor}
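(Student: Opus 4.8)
The plan is to establish the corollary as the $k$-linear analogue of \lemref{lem:surjection-H0}, by running the same argument with the base field $k$ in place of $\Q$. The observation that makes this work is the K\"unneth decomposition analogous to \eqref{eqn:vanish-cyclic-2-2}, now taken over $k$:
\[
{_R\Omega^m_{E_n/k}} \simeq \bigoplus_{i=0}^m \Omega^i_{E_n/k} \otimes_k \Omega^{m-i}_{R/k},
\]
so that ${_R\wt{\Omega}^m_{E_n/k}} \simeq \bigoplus_i \wt{\Omega}^i_{E_n/k}\otimes_k \Omega^{m-i}_{R/k}$, where the $R$-direction factors $\Omega^{m-i}_{R/k}$ are now forms over $k$. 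Since each $\Omega^{m-i}_{R/k}$ is a flat (indeed free) $k$-module, it commutes with the cohomology functors $H^*(E_n,-)$, and every cohomology-vanishing statement below reduces to its purely geometric counterpart for the sheaves $\Omega^j_{E_n/k}$ and the line bundles on $E = \P^1_k\times_k\P^1_k$.

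First I would record the geometric inputs over $k$. These are already available: \lemref{lem:Key} is stated over the base field $k$, and \lemref{lem:Coh-main} specialised to $R=k$ computes $H^i(E,\Omega^j_{E/k}(n))$. Running the induction in the proof of \lemref{lem:vanish-omega-1} over $k$ (using the top row of \eqref{eqn:vanish-omega-1-6} together with \lemref{lem:Key}) then yields $H^i(E_n,\wt{\Omega}^m_{E_n/k})=0$ for all $m\ge 0$ and $i,n\ge 1$; tensoring with $\Omega^{m-i}_{R/k}$ and using flatness gives the full statement $H^i(E_n,{_R\wt{\Omega}^m_{E_n/k}})=0$ in the same range.

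Next I would carry out the main argument exactly as for \lemref{lem:surjection-H0}. The identification ${_R\sHC^{k,(m)}_{m,E_n}}\simeq {_R\Omega^m_{E_n/k}}/d({_R\Omega^{m-1}_{E_n/k}})$ (\cite[Theorem~4.5.12]{Loday}) and the splitting of $E\inj E_n$ give the $k$-analogues of \eqref{eqn:vanish-cyclic-1-0} and \eqref{eqn:surjection-H0-1}. Pushing forward along the finite morphism $p\colon E_n\to E$ and passing to cohomology, surjectivity of $H^0(E_n,{_R\wt{\Omega}^m_{E_n/k}})\to H^0(E_n,{_R\tHC^{k,(m)}_{m,E_n}})$ is reduced, via the vanishing from the previous step, to $H^1(E,\im(d_{m-1}))=0$ and hence to $H^2(E,\Ker(d_{m-1}))=0$. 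For $m=1$ this follows from the $k$-version of Claim~1 of \S\ref{sec:Ker-d} together with the $k$-analogue of \eqref{eqn:Ker-d-3} and the vanishing above; for $m\ge 2$ it follows from the $k$-version of Claim~2, which reduces it to $H^2(E,\Omega^{m-1}_{E/k}(n))=0$, the content of \lemref{lem:Coh-main}(7) at $R=k$.

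The step I expect to require the most care is verifying that the local computations underlying Claims~1 and~2 are insensitive to replacing $\Q$ by $k$. These computations are the only places where the base ring enters nontrivially, through the modules $B_n=k[x_1]/(x^n_1)$ and $\Omega^*_{B_n/k}$; but the arguments use only that $\Omega^{\ge 2}_{B_n/k}=0$ and the explicit characteristic-zero description of $\Omega^1_{B_n/k}=k[x_1]\,dx_1/(x^n_1,\,x^n_1 dx_1)$, both identical to the $\Q$-case. The remaining subtlety is purely bookkeeping: one must keep the $R$-direction forms over $k$ rather than over $\Q$ throughout, and invoke flatness of $\Omega^*_{R/k}$ over $k$ to move $\otimes_k\Omega^*_{R/k}$ past each $H^i(E_n,-)$.
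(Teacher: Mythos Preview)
Your proposal is correct and follows essentially the same route as the paper. The paper's proof is the one-liner ``This is a direct consequence of \lemref{lem:surjection-H0} and the decomposition~\eqref{eqn:vanish-cyclic-2-3}'', which is shorthand for exactly what you spell out: the entire proof of \lemref{lem:surjection-H0} (including the preceding Claims~1 and~2 and the input from \lemref{lem:vanish-omega-1}) only interacts with the $R$-direction through the K\"unneth decomposition, where $\Omega^{*}_{R}$ appears as a flat $k$-tensor factor carried along passively; replacing $\Omega^{*}_{R/\Q}$ by $\Omega^{*}_{R/k}$ throughout changes nothing in the argument, since all the cohomology vanishing is established on the geometric factor $\wt{\Omega}^{*}_{E_n/k}$ and the twists $\Omega^{*}_{E/k}(n)$, which are already over $k$.
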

\begin{proof}
This is a direct consequence of \lemref{lem:surjection-H0} and
the decomposition ~\eqref{eqn:vanish-cyclic-2-3}.
\end{proof}

\section{Analysis of $H^0(E_n, {_R\tHC_{m, E_n}})$}
\label{sec:Global-sec}
Let $p: E_{n} \to E$ be the projection map
(see ~\eqref{eqn:Main-diagram}). This is a finite morphism.
For $p, q \ge 0$, let ${_R\sD^{(q)}_p({E_n}/E)}$ be the Zariski sheaf on $E$ 
defined in ~\S~\ref{sec:Not-R}. 
Following the classical notation, we shall write 
${_R\sD^{(1)}_p({E_n}/E)}$ as ${_R\sD_p({E_n}/E)}$.

\begin{lem}\label{lem:AQ-0}
The map ${_R\sD_1({E_n}/k)} \to {_R\sD_1({E_n}/E)}$ is an isomorphism.
\end{lem}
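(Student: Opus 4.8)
The plan is to deduce the statement from the Jacobi--Zariski (transitivity) sequence of André--Quillen homology, after reducing to an explicit local computation. The assertion is local on $E$, and since $p : E_n \to E$ is a finite, hence affine, morphism, $p_*$ is exact on quasi-coherent sheaves; so it suffices to check that the natural comparison map is an isomorphism on the sections over an affine open cover of $E$. Over a chart as in the proof of \lemref{lem:Key} I would use the local models of ~\eqref{eqn:Blow-up-7}: set $A = R[y_3, y_4]$, $R_n = {R[x_1]}/{(x^n_1)}$ and $A_n = A[x_1]/(x^n_1) = A \otimes_R R_n$, so that $\Spec(A) \subset E$ and $\Spec(A_n) = p^{-1}(\Spec(A))$. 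Using the base-ring conventions of ~\S\ref{sec:Not-R} together with flat base change along the field extension $k \subset R$, the sections of ${_R\sD_1(E_n/k)}$ and ${_R\sD_1(E_n/E)}$ over this chart are the André--Quillen homology groups $D_1(A_n/R)$ and $D_1(A_n/A)$ respectively, and the map in question is the canonical map attached to the tower of $R$-algebras $R \to A \to A_n$.

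The core of the plan is to read off the isomorphism from the transitivity sequence of André--Quillen homology for $R \to A \to A_n$. Because $A = R[y_3,y_4]$ is smooth over $R$, its cotangent complex $\mathbb{L}_{A/R} \simeq \Omega^1_{A/R}$ is a flat module placed in degree zero; hence $H_i(\mathbb{L}_{A/R} \otimes^{\mathbb{L}}_A A_n) = 0$ for $i \ge 1$, and the relevant portion of the sequence reduces to
\[
0 \to D_1(A_n/R) \to D_1(A_n/A) \xrightarrow{\partial} \Omega^1_{A/R} \otimes_A A_n \xrightarrow{\phi} \Omega^1_{A_n/R}.
\]
The left-hand zero already yields injectivity of $D_1(A_n/R) \to D_1(A_n/A)$; by exactness, surjectivity is equivalent to the vanishing of the connecting map $\partial$, i.e.\ to the injectivity of $\phi$.

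It remains to verify the injectivity of $\phi$. Since $A_n = A \otimes_R R_n$, the same tensor-product decomposition of differentials used in ~\eqref{eqn:Ker-d-1*} gives
\[
\Omega^1_{A_n/R} \simeq (\Omega^1_{A/R} \otimes_R R_n) \oplus (A \otimes_R \Omega^1_{R_n/R}),
\]
and under the identification $\Omega^1_{A/R} \otimes_A A_n \simeq \Omega^1_{A/R} \otimes_R R_n$ the map $\phi$ is precisely the inclusion of the first direct summand, hence split injective. Therefore $\partial = 0$ and $D_1(A_n/R) \xrightarrow{\simeq} D_1(A_n/A)$ over each chart; as these isomorphisms are induced by the canonical transitivity maps, they are compatible on overlaps and glue to the asserted isomorphism of sheaves on $E$. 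The only genuine inputs are the smoothness of $A$ over $R$ and the product structure $A_n = A \otimes_R R_n$ of the thickening, so I do not anticipate any serious obstacle here; the one point that makes the argument work is the split injectivity of $\phi$, which forces the connecting homomorphism to vanish.
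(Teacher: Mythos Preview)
Your proof is correct and follows essentially the same route as the paper: both invoke the Jacobi--Zariski transitivity sequence for the tower $R \to A \to A_n$, use smoothness of $A = R[y_3,y_4]$ over $R$ to eliminate the leftmost term, and reduce to the (split) injectivity of $\Omega^1_{A/R}\otimes_A A_n \to \Omega^1_{A_n/R}$, which is exactly the local content of the paper's reference to ``a simpler version of~\eqref{eqn:Ker-d-2}.'' One cosmetic slip: $k\subset R$ is not a ``field extension'' but merely the $k$-algebra structure on the regular ring $R$; since any $k$-algebra is flat over the field $k$, this has no effect on the argument.
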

\begin{proof}
By \cite[3.5.5.1]{Loday}, we have the exact sequence of Zariski sheaves
on $E$:
\begin{equation}\label{eqn:AQ-0-0}
{_R\sD_1({E}/k)}\otimes_{E} p_*(\sO_{E_n}) \to
{_R\sD_1({E_n}/k)} \to {_R\sD_1({E_n}/E)} \to
{_R\Omega^1_{E/k}} \otimes_E p_*(\sO_{E_n})
\end{equation}
\[
\hspace*{8cm} \to p_*({_R\Omega^1_{{E_n}/k}})
\to p_*({_R\Omega^1_{{E_n}/E}}) \to 0.
\]

Since $E_R \in \Sm_R$, the first term of this exact sequence vanishes
by \cite[Theorem~3.5.6]{Loday}.  It suffices therefore, to show that
the map ${_R\Omega^1_{E/k}} \otimes_E p_*(\sO_{E_n}) \to 
p_*({_R\Omega^1_{{E_n}/k}})$
is injective. But this follows from a simpler version of ~\eqref{eqn:Ker-d-2}.
\end{proof}

\begin{lem}\label{lem:AQ-1}
There are canonical isomorphisms 
$p_*({\sI^{n+1}}/{\sI^{2n}}) \otimes_k R \simeq {_R\sD_1({E_n}/E)} \simeq$ \\
${_R\sHH^{(1)}_{2,{E_n}/{E}}}$ of Zariski sheaves on $E$.
\end{lem}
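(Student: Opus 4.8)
The plan is to prove the two isomorphisms separately, reducing both to the local model of~\eqref{eqn:Blow-up-7} and, for the second one, invoking the identification of the weight-one part of Hochschild homology with Andr\'e--Quillen homology. Recall that $p : E_n \to E$ is finite, and over an affine open $U = \Spec(A) \subset E$ of the form considered in~\eqref{eqn:Blow-up-7} one has $p^{-1}(U) = \Spec(A_n)$ with $A_n = A[x_1]/(x^n_1)$ and $\sI|_U = (x_1)$. Since $k$ is a field, $R$ is flat over $k$, so the formation of both Andr\'e--Quillen and Hochschild homology commutes with the base change $-\otimes_k R$; I would use this to pass freely between the $R$-linear sheaves and their $k$-forms, so that the content of the lemma lies in the local computation over $A$.

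For the isomorphism $p_*(\sI^{n+1}/\sI^{2n}) \otimes_k R \simeq {_R\sD_1(E_n/E)}$, I would compute $\sD_1(E_n/E)$ through the cotangent complex of $A_n$ over $A$. The map $A \to A_n$ factors through the smooth algebra $A[x_1]$, and $A_n = A[x_1]/(x^n_1)$ is the quotient of $A[x_1]$ by the single non-zero-divisor $x^n_1$, hence a local complete intersection over $A$. By the transitivity sequence \cite[3.5.5.1]{Loday} and the lci vanishing, $L_{A_n/A}$ is represented by the two-term complex $[(x^n_1)/(x^{2n}_1) \xrightarrow{d} A_n\,dx_1]$ in homological degrees $1$ and $0$, with $d(x^n_1) = n\,x^{n-1}_1\,dx_1$; in particular $\sD_p(E_n/E) = 0$ for $p \ge 2$ and $\sD_1(E_n/E) = \Ker(d)$. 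Under the identifications $(x^n_1)/(x^{2n}_1) \simeq A_n$ (via $x^n_1 \mapsto 1$) and $A_n\,dx_1 \simeq A_n$, the map $d$ is multiplication by the unit multiple $n\,x^{n-1}_1$; since $n$ is invertible in characteristic zero, its kernel is the annihilator of $x^{n-1}_1$ in $A_n$, which one computes to equal $(x^{n+1}_1)/(x^{2n}_1) = \sI^{n+1}/\sI^{2n}$ over $U$. These identifications are canonical, hence glue over $E$, and applying $p_*$ (exact on quasi-coherent sheaves, as $p$ is finite) together with the flat base change $-\otimes_k R$ yields the first isomorphism.

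For the isomorphism ${_R\sD_1(E_n/E)} \simeq {_R\sHH^{(1)}_{2, E_n/E}}$, I would appeal to the general fact that, for a morphism of $\Q$-algebras, the weight-one summand of relative Hochschild homology agrees with Andr\'e--Quillen homology shifted by one, i.e. $HH^{(1)}_m(C/D) \simeq D_{m-1}(C/D)$ (see \cite[\S~4.5]{Loday}). Applying this with $m = 2$, $D = A \otimes_k R$ and $C = A_n \otimes_k R$, and noting that the identification is functorial in $U$ and so sheafifies on $E$, gives the second isomorphism.

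The step I expect to absorb most of the work is the local kernel computation in the second paragraph: confirming that $d$ is, up to the unit $n$, multiplication by $x^{n-1}_1$ on the free rank-one module $(x^n_1)/(x^{2n}_1)$, and that its kernel is \emph{exactly} $(x^{n+1}_1)/(x^{2n}_1)$ rather than a larger submodule. The role of characteristic zero (invertibility of $n$) is essential here, just as in \lemref{lem:Key}, and one must keep track of the vanishing $\sD_p = 0$ for $p \ge 2$ to be sure that $H_1(L_{A_n/A})$ genuinely computes $\sD_1(E_n/E)$. As an alternative to computing $\sD_1(E_n/E)$ directly, one could compute $\sD_1(E_n/k)$ by the identical hypersurface argument and then transport the result via \lemref{lem:AQ-0}, which identifies ${_R\sD_1(E_n/k)}$ with ${_R\sD_1(E_n/E)}$.
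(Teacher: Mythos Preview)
Your proposal is correct and follows essentially the same approach as the paper: the second isomorphism is attributed to the same result in \cite{Loday}, and the first is obtained by identifying $\sD_1(E_n/E)$ with the kernel of the conormal map $(x^n_1)/(x^{2n}_1)\xrightarrow{d}\Omega^1_{V/E}|_{E_n}\simeq A_n\,dx_1$, which the paper writes via the Jacobi--Zariski sequence using $\sD_1(V/E)=0$ rather than in cotangent-complex language. The local kernel computation and the conclusion $\Ker(d)=(x^{n+1}_1)/(x^{2n}_1)$ are identical.
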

\begin{proof}
The second isomorphism is from  \cite[Proposition~4.5.13]{Loday}.
So we only need to verify the first isomorphism.
Since the relative Andr{\'e}-Quillen homology sheaf ${_R\sD_1({V}/E)}$ 
vanishes, we have an exact sequence of Zariski sheaves
\begin{equation}\label{eqn:AQ-1-0}
0 \to {_R\sD_1({E_n}/E)} \to p_*({_R{\sI^n}/{\sI^{2n}}}) \xrightarrow{d} 
p_*({_R\Omega^1_{V/E}|_{E_n}}) \to p_*({_R\Omega^1_{{E_n}/E}}) \to 0.
\end{equation}
Hence, we need to show that the canonical inclusion
${\sI^{n+1}}/{\sI^{2n}} \inj {\sI^n}/{\sI^{2n}}$ is identified with
the kernel of the map ${\sI^n}/{\sI^{2n}} \xrightarrow{d} 
\Omega^1_{V/E}|_{E_n}$.

Since this is a local calculation with $R = k$, 
we can assume $E = \Spec(A)$, where
$A = k[y_3,y_4]$. Following the notations of
~\eqref{eqn:Blow-up-5}, we have $\Omega^1_{V/E}|_{E_n} = {A[x_1]dx_1}/{(x^n_1)}$ 
and it is easy to check that the kernel of the map
\[
d: \frac{(x^n_1)}{(x^{2n}_1)} \to \frac{A[x_1]dx_1}{(x^n_1)}
\]
is precisely ${(x^{n+1}_1)}/{(x^{2n}_1)}$.
\end{proof}

\begin{lem}\label{lem:H0-Omega}
The restriction map ${_R\Omega^1_{{E_n}/{P_n}}} \to {_R\Omega^1_{{E}/{k}}}$
is an isomorphism. In particular, we have  
$H^0(E_n, {_R\Omega^1_{{E_n}/{P_n}}}) = 0$.
\end{lem}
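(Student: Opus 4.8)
The plan is to reduce the asserted isomorphism to a local computation on the standard affine charts of $E$ and then to carry that computation out explicitly using the description of the blow-up in ~\eqref{eqn:Blow-up-5}. Since $p\colon E_n \to E$ is finite and $E_n$ has the same underlying topological space as $E$, the restriction map ${_R\Omega^1_{{E_n}/{P_n}}} \to {_R\Omega^1_{{E}/{k}}}$ is a morphism of coherent sheaves on this common space, so being an isomorphism can be checked locally. Over the chart $\{y_1 \neq 0\}$ of ~\eqref{eqn:Blow-up-7} one has $E = \Spec(A)$ with $A = R[y_3,y_4]$, $E_n = \Spec(A_n)$ with $A_n = A[x_1]/(x^n_1)$, and the map $\ov{\alpha}\colon Q_n \to A_n$ of ~\eqref{eqn:Blow-up-5} sends $x_1 \mapsto x_1$, $x_2 \mapsto y_3y_4x_1$, $x_3 \mapsto y_3x_1$ and $x_4 \mapsto y_4x_1$; the remaining charts are entirely symmetric. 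As in ~\eqref{eqn:vanish-cyclic-2-3}, everything here is compatible with the flat base change $\Q \subset R$, so it does no harm to run the computation over $R$ directly.

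The heart of the argument will be the computation of $\Omega^1_{{A_n}/{Q_n}}$ from the relative cotangent sequence. The module $\Omega^1_{{A_n}/R}$ is generated over $A_n$ by $dx_1, dy_3, dy_4$ subject only to $x^{n-1}_1\,dx_1 = 0$ (using $\Q \subset R$), and ${_R\Omega^1_{{E_n}/{P_n}}}$ is its quotient by the $A_n$-submodule generated by $d\ov{\alpha}(x_1), \dots, d\ov{\alpha}(x_4)$. From $d\ov{\alpha}(x_1) = dx_1$ I obtain $dx_1 = 0$; substituting this into $d\ov{\alpha}(x_3) = x_1\,dy_3 + y_3\,dx_1$ and $d\ov{\alpha}(x_4) = x_1\,dy_4 + y_4\,dx_1$ forces $x_1\,dy_3 = x_1\,dy_4 = 0$, after which the relation coming from $d\ov{\alpha}(x_2)$ holds automatically. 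Hence
\[
\Omega^1_{{A_n}/{Q_n}} \simeq (A_n/(x_1))\,dy_3 \oplus (A_n/(x_1))\,dy_4 \simeq A\,dy_3 \oplus A\,dy_4,
\]
which is precisely the local form of ${_R\Omega^1_{{E}/{k}}}$. The closed immersion $E \inj E_n$ corresponds to the surjection $A_n \surj A_n/(x_1) = A$, under which the comparison map sends $dy_3 \mapsto dy_3$ and $dy_4 \mapsto dy_4$; thus it is the identity on generators, hence an isomorphism locally. Being canonical, these local isomorphisms glue to the asserted global isomorphism of sheaves.

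For the final assertion, the isomorphism gives $H^0(E_n, {_R\Omega^1_{{E_n}/{P_n}}}) \simeq H^0(E, {_R\Omega^1_{{E}/{k}}})$. By flat base change along $\Q \subset R$ (compare ~\eqref{eqn:vanish-cyclic-2-2}), the latter equals $H^0(E, \Omega^1_{{E}/k}) \otimes_k R$, and $H^0(E, \Omega^1_{{E}/k}) = 0$ by the $n = 0$ case of \lemref{lem:Coh-main}(5) (equivalently, $\P^1 \times \P^1$ carries no nonzero global $1$-form). Therefore $H^0(E_n, {_R\Omega^1_{{E_n}/{P_n}}}) = 0$.

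I expect the only genuine obstacle to be the bookkeeping in the middle step: one must confirm that, once $dx_1$ is annihilated, the four differentials $d\ov{\alpha}(x_i)$ generate exactly the submodule $x_1\,dy_3\,A_n + x_1\,dy_4\,A_n$ and impose no further relations, so that the quotient is free over $A = R[y_3,y_4]$ of the expected rank and carries no spurious $x_1$-torsion. The characteristic-zero hypothesis enters only through the relation $x^{n-1}_1\,dx_1 = 0$ in $\Omega^1_{{A_n}/R}$, and since $dx_1$ is killed outright it plays no essential role in the final answer.
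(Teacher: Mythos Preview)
Your argument is correct, and it reaches the same local conclusion as the paper, but via a different exact sequence. You use the second fundamental sequence for the composite $R \to Q_n \xrightarrow{\ov{\alpha}} A_n$, presenting $\Omega^1_{A_n/Q_n}$ directly as $\Omega^1_{A_n/R}$ modulo the images $d\ov{\alpha}(x_i)$; since $d\ov{\alpha}(x_1)=dx_1$ kills $dx_1$ outright, the remaining relations collapse to $x_1\,dy_3 = x_1\,dy_4 = 0$ and you read off $\Omega^1_{A_n/Q_n}\simeq A\,dy_3\oplus A\,dy_4$. The paper instead introduces the auxiliary scheme $F_n = P_n \times E$ and uses the conormal sequence for the closed immersion $E_n \hookrightarrow F_n$ over $P_n$: here $\Omega^1_{F_n/P_n}|_{E_n}\simeq A_n\,dy_3 \oplus A_n\,dy_4$ is immediate, and the work lies in showing that $d\colon J_R/J_R^2 \to \Ker(\eta_1)=x_1A_n\,dy_3\oplus x_1A_n\,dy_4$ is surjective, which is done by exhibiting explicit preimages in $J_R=(x_2-y_3y_4x_1,\,x_3-y_3x_1,\,x_4-y_4x_1)$. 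Your route is slightly more economical (no auxiliary $F_n$, no need to analyse $J_R/J_R^2$), while the paper's factorisation through $P_n\times E$ makes the role of the Segre embedding more visible and ties in naturally with the later computations involving $\sD_1(E_n/P_n)$ in \lemref{lem:local-AQ} and \lemref{lem:Key-Key}. The deduction of $H^0=0$ from \lemref{lem:Coh-main}(5) is identical in both.
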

\begin{proof}
We let $F_n = P_n \times E$ and let $\sJ = \Ker(\sO_{F_n} \surj \sO_{E_n})$
so that there is a commutative diagram (with exact top row) of Zariski sheaves 
on $E$:
\begin{equation}\label{eqn:H0-Omega-0}
\xymatrix@C1pc{
{_R{\sJ}/{\sJ^2}} \ar[r]^-{d} &  {_R\Omega^1_{{F_n}/{P_n}}} \otimes_{F_n}
\sO_{E_n} \ar[r] \ar@{->>}[d]_{\eta_1}  & {_R\Omega^1_{{E_n}/{P_n}}} \ar[r] 
\ar@{->>}[d]^{\eta_2}  & 0 \\
& {_R\Omega^1_{E/k}} \ar@{=}[r] & {_R\Omega^1_{E/k}}.}
\end{equation}

To prove the first assertion of the lemma, it suffices therefore to show that 
the map
${_R{\sJ}/{\sJ^2}} \xrightarrow{d} \Ker(\eta_1)$ is surjective.
This is again a local calculation and so we replace $E_R$ by
$\Spec(A)$, where $A = R[y_3,y_4]$ and follow the notations of
~\eqref{eqn:Blow-up-5} and ~\eqref{eqn:Blow-up-7}. 

In the local notations, we have ${_R\Omega^1_{{F_n}/{P_n}}} \otimes_{F_n}
\sO_{E_n} = \Omega^1_{{Q_n[y_3, y_4]}/{Q_n}} \otimes_{Q_n[y_3, y_4]} A_n
\simeq A_ndy_3 \oplus A_n dy_4$
and ${_R\Omega^1_{E/k}} \simeq Ady_3 \oplus Ady_4$.
It follows that $\Ker(\eta_1) = x_1A_ndy_3 \oplus x_1A_ndy_4$.
So we need to show that the $A_n$-linear (and hence $Q_n[y_3, y_4]$-linear)
map ${J_R}/{J^2_R} \xrightarrow{d} x_1A_ndy_3 \oplus x_1A_ndy_4$ is surjective, 
where $J_R = \Ker({\ov{\alpha}}) =  
(x_2 -y_3y_4x_1, x_3 - y_3x_1, x_4 - y_4x_1)$.

Now, any element of the right hand side of this map is of the form
$w = fx_1 dy_3 + gx_1dy_4$ with $f, g \in A_n$. If let $f'$ (resp. $g'$) denote
a lift of $f$ (resp. $g$) via $\ov{\alpha}$, then we get
\begin{equation}\label{eqn:H0-Omega-2}
d(f'(y_3x_1-x_3)) {=}^1 fd(y_3x_1-x_3) {=}^2 fx_1 dy_3,
\end{equation}
where ${=}^1$ follows from the fact that $d$ is $Q_n[y_3, y_4]$-linear
and ${=}^2$ follows from the fact that $d$ is induced by the
derivation  $d: Q_n[y_3, y_4] \to \Omega^1_{{Q_n[y_3, y_4]}/{Q_n}}$
over $Q_n$ and the fact that $\{x_1, \cdots , x_4\} \subset Q_n$.
Since $f'(y_3x_1-x_3) \in J_R$ by ~\eqref{eqn:H0-Omega-1}, we conclude that
$fx_1dy_3$ lies in the image of $d$. In the same way, we get
$d(g'(y_4x_1-x_4)) = gx_1dy_4$ and $g'(y_4x_1-x_4) \in J_R$.
This proves the claimed surjectivity and hence the first part of the
lemma.

The second part follows directly from the first part and
\lemref{lem:Coh-main}, which implies that $H^0(E_n, {_R\Omega^1_{E/k}})
\simeq H^0(E, {_R\Omega^1_{E/k}}) = 0$.
\end{proof}

Before we prove our next key result, we present the following local computation
of some Andr{\'e}-Quillen homology.

\begin{lem}\label{lem:local-AQ}
In the notations of local coordinate rings in ~\eqref{eqn:Blow-up-7},
we have $D_1({A_n}/{Q_n}) \simeq {G_n}/{J_n}$, where
$G_n$ is the submodule of the free $A_n$-module 
$A_ndx_2 \oplus A_ndx_3 \oplus A_ndx_4$ of the form
$\{fdx_2 + (x^{n-1}_1g - fy_4)dx_3 + (x^{n-1}_1h - fy_3)dx_4| f,g, h \in A_n\}$
and $J_n = (x_1dx_2 - y_4x_1dx_3 - y_3x_1dx_4, d(\fm^n_R))$.
\end{lem}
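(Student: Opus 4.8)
The plan is to compute $D_1(A_n/Q_n)$ from the standard conormal (Lichtenbaum--Schlessinger) presentation of the cotangent complex. Write $f_2 = x_2 - y_3y_4x_1$, $f_3 = x_3 - y_3x_1$, $f_4 = x_4 - y_4x_1$, so that by ~\eqref{eqn:H0-Omega-1} we have $A_n = {P}/{J_R}$ with $P = Q_n[y_3,y_4]$ and $J_R = (f_2, f_3, f_4)$. Since $P$ is a polynomial algebra (hence smooth) over $Q_n$, one has $D_1(P/Q_n) = 0$ and $D_1(A_n/P) = J_R/J_R^2$, so the Jacobi--Zariski sequence for $Q_n \to P \to A_n$ collapses to the exact sequence $0 \to D_1(A_n/Q_n) \to {J_R}/{J_R^2} \xrightarrow{\delta} \Omega^1_{P/Q_n}\otimes_P A_n \to \Omega^1_{A_n/Q_n} \to 0$, whence $D_1(A_n/Q_n) = \Ker(\delta)$. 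I would then introduce the free $A_n$-module $F = A_ndx_2 \oplus A_ndx_3 \oplus A_ndx_4$ with the surjection $\phi\colon F \surj {J_R}/{J_R^2}$, $dx_i \mapsto \ov{f_i}$. As $\phi$ is onto, it restricts to a surjection $\Ker(\delta\circ\phi) \surj \Ker(\delta)$ with kernel $\Ker(\phi)$, so $D_1(A_n/Q_n) \cong {\Ker(\delta\circ\phi)}/{\Ker(\phi)}$, and the lemma reduces to the two identifications $\Ker(\delta\circ\phi) = G_n$ and $\Ker(\phi) = J_n$.

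For the first identification, I compute $\delta$ explicitly over $Q_n$ (where $x_1,\dots,x_4$ are constants): inside $\Omega^1_{P/Q_n}\otimes_P A_n = A_ndy_3\oplus A_ndy_4$ one gets $\delta(dx_2) = -x_1y_4\,dy_3 - x_1y_3\,dy_4$, $\delta(dx_3) = -x_1\,dy_3$, $\delta(dx_4) = -x_1\,dy_4$. Thus $f\,dx_2 + b\,dx_3 + c\,dx_4 \in \Ker(\delta\circ\phi)$ exactly when $x_1(fy_4 + b) = 0$ and $x_1(fy_3 + c) = 0$ in $A_n$. Since $A_n = R[x_1,y_3,y_4]/(x^n_1)$ is free over $R[y_3,y_4]$ on $1, x_1, \dots, x^{n-1}_1$, the annihilator of $x_1$ is $(x^{n-1}_1)$, so $b = x^{n-1}_1g - fy_4$ and $c = x^{n-1}_1h - fy_3$ for some $g,h \in A_n$. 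This is precisely $G_n$.

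The identification $\Ker(\phi) = J_n$ is the heart of the argument and the step I expect to be the main obstacle, since it amounts to computing all syzygies of $f_2, f_3, f_4$ in the conormal module. My plan is to pass to the polynomial ring $\Lambda = R[x_1, \dots, x_4, y_3, y_4]$ and the ideal $\mathfrak{I} = (f_2, f_3, f_4, x^n_1)$, for which $A_n = \Lambda/\mathfrak{I}$. Here $f_2, f_3, f_4, x^n_1$ form a regular sequence (they are successively monic in $x_2, x_3, x_4$, after which $x^n_1$ is a nonzerodivisor on $R[x_1,y_3,y_4]$), so $\mathfrak{I}/\mathfrak{I}^2$ is free over $A_n$ on $\ov{f_2}, \ov{f_3}, \ov{f_4}, \ov{x^n_1}$. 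Writing $P = \Lambda/\mathfrak{K}$ with $\mathfrak{K} = (x_1x_2 - x_3x_4) + \fm^n_R\Lambda$ and observing $\mathfrak{K}\subseteq\mathfrak{I}$, the standard identity for a conormal module under a further quotient gives ${J_R}/{J_R^2} \cong ({\mathfrak{I}}/{\mathfrak{I}^2})/\overline{\mathfrak{K}}$, where $\overline{\mathfrak{K}}$ is the $A_n$-submodule generated by the images of the generators of $\mathfrak{K}$.

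I would then read off $\Ker(\phi)$ from this presentation. The key identity $x_1x_2 - x_3x_4 = x_1f_2 - y_4x_1f_3 - y_3x_1f_4 - f_3f_4$ shows that the image of $x_1x_2 - x_3x_4$ in $\mathfrak{I}/\mathfrak{I}^2$ is $x_1\ov{f_2} - y_4x_1\ov{f_3} - y_3x_1\ov{f_4}$ (the $f_3f_4$ term lies in $\mathfrak{I}^2$, and the $\ov{x^n_1}$-coordinate vanishes), yielding the first generator $x_1dx_2 - y_4x_1dx_3 - y_3x_1dx_4$ of $J_n$. For a monomial $\mu \in \fm^n_R$, expanding $\mu$ through $x_i = \xi_i + f_i$ (with $\xi_2 = y_3y_4x_1$, $\xi_3 = y_3x_1$, $\xi_4 = y_4x_1$) modulo $\mathfrak{I}^2$ produces the $\ov{f_i}$-coefficients $\ov{\partial\mu/\partial x_i}$ together with a multiple of $\ov{x^n_1}$, which are exactly the components of $d\mu$, giving the generators $d(\fm^n_R)$. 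Finally, since $\mu = x^n_1$ shows $\ov{x^n_1} \in \overline{\mathfrak{K}}$, projecting away the $\ov{x^n_1}$-coordinate identifies ${J_R}/{J_R^2}$ with the cokernel of $\phi$ and hence $\Ker(\phi) = (x_1dx_2 - y_4x_1dx_3 - y_3x_1dx_4,\, d(\fm^n_R)) = J_n$, completing the proof. The technical care will be concentrated in verifying the regular-sequence property and in the bookkeeping of the expansion modulo $\mathfrak{I}^2$; the remaining arguments are routine linear algebra over $A_n$.
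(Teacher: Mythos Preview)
Your argument is correct, but it takes a genuinely different route from the paper's. You use the Jacobi--Zariski sequence for $Q_n \to P \to A_n$ with $P = Q_n[y_3,y_4]$ smooth over $Q_n$, which realizes $D_1(A_n/Q_n)$ as the kernel of $\delta\colon J_R/J_R^2 \to \Omega^1_{P/Q_n}\otimes_P A_n$; this forces you to compute the syzygy module $\Ker(\phi)$ separately, and you do so by passing to the polynomial ring $\Lambda$ and exploiting that $(f_2,f_3,f_4,x_1^n)$ is a regular sequence. The paper instead uses the Jacobi--Zariski sequence for $R_n \to Q_n \to A_n$, with $A_n$ smooth over $R_n$ (recall $A_n = R_n[y_3,y_4]$). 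That choice embeds $D_1(A_n/Q_n)$ directly into $\Omega^1_{Q_n/R_n}\otimes_{Q_n} A_n$, which from the presentation $Q_n = R_n[x_2,x_3,x_4]/(x_1x_2 - x_3x_4,\, \fm_R^n)$ is already $F/J_n$; the relations $J_n$ thus come for free, and the only computation left is the kernel of the induced map $\ov{\alpha}$ to $\Omega^1_{A_n/R_n}$, which yields $G_n$ by exactly the annihilator argument you give. In short: both arguments compute the same object, but the paper's factorization makes $J_n$ appear automatically from the presentation of $\Omega^1_{Q_n/R_n}$, whereas your factorization trades that for the (clean but additional) regular-sequence bookkeeping in $\Lambda$. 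Your approach has the mild advantage of making the regular-sequence structure behind $J_n$ fully explicit; the paper's is shorter.
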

\begin{proof}
It is easy to check using ~\eqref{eqn:Blow-up-5} that there is a
commutative diagram
\begin{equation}\label{eqn:Key-Key-3}
\xymatrix@C1pc{
R_n \ar[r] \ar[d] & Q_n \ar[d] \ar[dr] & & \\
A_n \ar[r] \ar@/_1.5pc/[rr]_{id} & Q_n[y_3,y_4] \ar@{->>}[r]^-{\ov{\alpha}} & 
A_n \ar[r] & A}
\end{equation}
such that the composite map 
$A_n \to Q_n[y_3,y_4] \xrightarrow{\ov{\alpha}} A_n$ on the bottom is identity. 
Since $A_n$ is smooth over $R_n$ so that $D_1({A_n}/{R_n}) = 0$, we have the
Jacobi-Zariski exact sequence
\begin{equation}\label{eqn:local-AQ-0}
0 \to D_1({A_n}/{Q_n}) \to \Omega^1_{{Q_n}/{R_n}} \otimes_{Q_n} A_n 
\xrightarrow{\ov{\alpha}}
\Omega^1_{{A_n}/{R_n}} \to \Omega^1_{{A_n}/{Q_n}} \to 0.
\end{equation}
We can write $Q_n = {R_n[x_2, x_3, x_4]}/{(x_1x_2-x_3x_4, \fm^n_R)}$
and this yields
\begin{equation}\label{eqn:local-AQ-1}
\Omega^1_{{Q_n}/{R_n}} \otimes_{Q_n} A_n \simeq \frac{A_n dx_2 \oplus A_n dx_3 
\oplus A_n dx_4}{(x_1dx_2 - x_3dx_4 -x_4dx_3, d(\fm^n_R))}.
\end{equation}
Since $\Omega^1_{{A_n}/{R_n}} \simeq A_ndy_3 \oplus A_ndy_4$,
we get (in ~\eqref{eqn:local-AQ-0})
\[
\begin{array}{lll}
\ov{\alpha}(f_2dx_2 + f_3dx_3 + f_4dx_4) = 0 & \Leftrightarrow &
f_2\ov{\alpha}(dx_2) + f_3 \ov{\alpha}(dx_3) + f_4\ov{\alpha}(dx_4) = 0 \\
& \Leftrightarrow & f_2x_1(y_3dy_4 + y_4dy_3) + f_3x_1dy_3 + f_4x_1dy_4 = 0 \\
& \Leftrightarrow & (f_2x_1y_4 + f_3x_1) dy_3 + (f_2x_1y_3 + f_4x_1)dy_4 = 0 \\
& \Leftrightarrow & (f_2y_4 + f_3)x_1 = 0 = (f_2y_3 + f_4)x_1 \ \mbox{in} \
\ A_n.
\end{array}
\] 

Representing $f_i$'s in $R[x_1, y_3, y_4]$ (see ~\eqref{eqn:Blow-up-5}),
the last condition above is equivalent to 
$x^n_1 |(f_2y_4 + f_3)x_1$ and $x^n_1 | (f_2y_3 + f_4)x_1$ in $R[x_1, y_3, y_4]$.
Since $R$ is an integral domain, we equivalently get
$f_3  = x^{n-1}_1g_3 - f_2y_4$ and $f_4 = x^{n-1}_1g_4 - f_2y_3$ for some
$g_3, g_4  \in R[x_1, y_3, y_4]$.
This concludes the proof.
\end{proof}

\vskip .3cm

The key result to estimate $H^0(E_n, {_R\wt{\Omega}^1_{E_n}})$ is the following.

\begin{lem}\label{lem:Key-Key}
For every integer $n \ge 2$, the vertical arrows in ~\eqref{eqn:Main-diagram} 
give rise to a commutative diagram of short exact sequences of
Zariski sheaves on $\P^3_k$:
\begin{equation}\label{eqn:Key-Key-0}
\xymatrix@C1pc{
0 \ar[r] & {_R\sD_1({E'_n}/{P'_n})}\otimes_{E'_n} \sO_{E'} \ar[r]^-{\gamma'}
\ar[d] & {_R\sD_1({E'}/{P'_n})} \ar[r]^-{\eta'} \ar[d] & 
{_R\sO_{E'}(1)} \ar[r] \ar[d] & 0 \\
0 \ar[r] & {_R\sD_1({E_n}/{P_n})}\otimes_{E_n} \sO_{E} \ar[r]_-{\gamma}
& {_R\sD_1({E}/{P_n})} \ar[r]_-{\eta} & {_R\sO_{E}(1)} \ar[r] & 0.}
\end{equation}
\end{lem}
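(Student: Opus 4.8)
The plan is to realize both rows as truncations of the André--Quillen (Jacobi--Zariski) transitivity sequence attached to the composites $P_n\to E_n\to E$ and $P'_n\to E'_n\to E'$, and to obtain the comparison between them from the functoriality of that sequence. As in the proof of \lemref{lem:Coh-main} and in \eqref{eqn:vanish-cyclic-2-3}, everything is the flat base change of the corresponding objects over $\Q$, so I would first reduce all verifications to the absolute case $R=k$ and reinstate the decoration ${_R}(\cdot)$ at the end.

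First I would write down the transitivity sequence for $\sO_{P_n}\to\sO_{E_n}\to\sO_E$ (sheafified, with the relative structure of \S\ref{sec:Not-R}):
\[
\cdots\to {_R\sD_1(E_n/P_n)}\otimes_{E_n}\sO_E\xrightarrow{\gamma}{_R\sD_1(E/P_n)}\xrightarrow{\eta}{_R\sD_1(E/E_n)}\xrightarrow{\delta}{_R\Omega^1_{E_n/P_n}}\otimes_{E_n}\sO_E\to{_R\Omega^1_{E/P_n}}\to 0,
\]
and identify ${_R\sD_1(E/E_n)}$ with the conormal sheaf of the reduced exceptional divisor in its $n$-th thickening: since $E\hookrightarrow E_n$ is cut out by $\sI/\sI^n$, one gets $\sD_1(E/E_n)\cong(\sI/\sI^n)/(\sI/\sI^n)^2\cong\sI/\sI^2\cong\sO_E(1)$, and the same computation on $\A^4$ gives $\sD_1(E'/E'_n)\cong\sO_{E'}(1)$. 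This produces the rightmost terms and the maps $\eta,\eta'$. Conceptually the whole row is an evaluation (dual Euler) sequence: because $E\to P_n$ factors through the vertex and $E$ is smooth, ${_R\sD_1(E/P_n)}\cong(\bar\fm/\bar\fm^2)\otimes_k\sO_E\cong H^0(E,\sO_E(1))\otimes_k\sO_E$ by \lemref{lem:Coh-main}, and $\eta$ is the evaluation onto $\sO_E(1)$; the primed row is literally the Euler sequence $0\to\Omega^1_{\P^3}(1)\to\sO_{\P^3}^{4}\to\sO_{\P^3}(1)\to0$.

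Next I would extract the short exact sequence by killing the two connecting maps. Right-exactness (surjectivity of $\eta$) is equivalent to $\delta=0$, i.e.\ to injectivity of ${_R\Omega^1_{E_n/P_n}}\otimes_{E_n}\sO_E\to{_R\Omega^1_{E/P_n}}$; since ${_R\Omega^1_{E/P_n}}\cong{_R\Omega^1_{E/k}}$ (again using that $E\to P_n$ factors through the vertex) and \lemref{lem:H0-Omega} identifies ${_R\Omega^1_{E_n/P_n}}$ with the same sheaf, this map is an isomorphism and $\delta=0$. Left-exactness (injectivity of $\gamma$) is the heart of the matter: it is equivalent to the vanishing of the connecting map ${_R\sD_2(E/E_n)}\to{_R\sD_1(E_n/P_n)}\otimes_{E_n}\sO_E$, and it forces the first term to be the rank-three syzygy (Euler) bundle $\ker(\eta)$. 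I would establish this locally from the explicit presentation ${_R\sD_1(E_n/P_n)}\cong G_n/J_n$ of \lemref{lem:local-AQ}, together with the analogous presentation of ${_R\sD_1(E/P_n)}$, checking directly that $\gamma$ is injective with image $\ker(\eta)$.

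Finally, the vertical arrows are induced by the closed immersions $P_n\hookrightarrow P'_n$, $E_n\hookrightarrow E'_n$, $E\hookrightarrow E'$ arising from the graded surjection $k[x_1,\dots,x_4]\twoheadrightarrow k[M]$ --- locally the substitution $y_2\mapsto y_3y_4$ relating \eqref{eqn:Blow-up-6} to \eqref{eqn:Blow-up-5} --- and functoriality of the transitivity sequence in such maps of pairs makes the displayed diagram commute. The step I expect to be the main obstacle is the injectivity of $\gamma$: the restriction ${_R\sD_1(E_n/P_n)}\otimes_{E_n}\sO_E$ is sensitive to the nilpotents of $E_n$ (the naive tensor is too small by two, and one must use the derived restriction of the cotangent complex, whose $\mathrm{Tor}$-contributions restore the missing rank), so the local calculation via \lemref{lem:local-AQ} has to be carried out carefully. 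It is precisely here that the hypothesis $n\ge2$, the characteristic-zero assumption, and the normality of $R[M]$ enter.
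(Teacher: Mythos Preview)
Your approach is essentially the paper's: both set up the Jacobi--Zariski transitivity sequence for $P_n\to E_n\to E$, identify $\sD_1(E/E_n)\cong\sI/\sI^2\cong\sO_E(1)$, and then verify surjectivity of $\eta$ and injectivity of $\gamma$ separately. For the injectivity of $\gamma$ you correctly point to the local computation via \lemref{lem:local-AQ}, which is exactly what the paper does (diagram~\eqref{eqn:Key-Key-5} and the explicit kernel calculation~\eqref{eqn:Key-Key-7}).

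Your argument for the surjectivity of $\eta$ is genuinely different and a bit more economical. You deduce $\delta=0$ from the injectivity of ${_R\Omega^1_{E_n/P_n}}\otimes_{E_n}\sO_E\to{_R\Omega^1_{E/P_n}}$, which follows at once from \lemref{lem:H0-Omega}: since the composite ${_R\Omega^1_{E_n/P_n}}\twoheadrightarrow{_R\Omega^1_{E_n/P_n}}\otimes_{E_n}\sO_E\to{_R\Omega^1_{E/P_n}}\cong{_R\Omega^1_{E/k}}$ is an isomorphism, both intermediate maps are isomorphisms. The paper instead factors through $D_1(A/R_n)$ and proves $D_1(A/R_n)\xrightarrow{\sim}D_1(A/A_n)$ via a separate Jacobi--Zariski sequence (diagram~\eqref{eqn:Key-Key-4}); your route avoids this detour.

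One point to correct: your closing remarks about ``the naive tensor is too small by two'' and needing ``the derived restriction of the cotangent complex, whose $\mathrm{Tor}$-contributions restore the missing rank'' are misguided. The statement is about the ordinary tensor product $\sD_1(E_n/P_n)\otimes_{E_n}\sO_E$, and the paper's local calculation shows directly that $\psi_n$ is injective with image a rank-three submodule of $Adx_2\oplus Adx_3\oplus Adx_4$, matching $\ker(\eta)$. There is no rank deficiency and no need for derived tensors; the hypothesis $n\ge 2$ enters only to ensure $x_1^{n-1}\equiv 0\bmod x_1$ in the explicit description of $\Ker(\theta_n\circ\zeta_n)$. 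Drop that speculation and simply carry out the local check as in the paper.
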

\begin{proof}
The vertical arrows are the natural restriction maps
via the closed embedding $E \inj \P^3_k$. We shall prove the
exactness of the bottom row as the argument for the top row is
identical and simpler.

We have the Jacobi-Zariski exact sequence
\[
{_R\sD_1({E_n}/{P_n})}\otimes_{E_n} \sO_{E} \xrightarrow{\gamma}
{_R\sD_1({E}/{P_n})} \xrightarrow{\eta} 
{_R{\sI}/{\sI^2}} \simeq {_R\sO_E(1)} := R \otimes_k \sO_E(1),
\]
where $\sI$ is the sheaf of ideals of the embedding $E \inj V$.
We need to show that $\gamma$ is injective and $\eta$ is surjective.
It suffices to prove these two assertions in our local situation.
We shall prove the surjectivity of $\eta$ first.
We continue to follow the notations of the local coordinate rings
in ~\eqref{eqn:Blow-up-7}.

Since ${I}/{I^2} = {(x_1)}/{(x^2_1)} \simeq D_1({A}/{A_n})$, we need
to show that the map $D_1({A}/{Q_n}) \xrightarrow{\eta} D_1({A}/{A_n})$,
induced by the map $Q_n \inj Q_n[y_3, y_4] \xrightarrow{\ov{\alpha}} A_n$,
is surjective.

Since $R_n \to A_n$ is a smooth map so that
$D_1({A_n}/{R_n}) = 0$, there is a Jacobi-Zariski exact sequence of the
form
\begin{equation}\label{eqn:Key-Key-1}
0 \to D_1({A}/{R_n}) \to 
D_1({A}/{A_n}) \to \Omega^1_{{A_n}/{R_n}} \otimes_{A_n} A \to \Omega^1_{A/{R_n}} 
\to 0.
\end{equation}

Since $\Omega^1_{{R/{R_n}}} = 0$, the fundamental exact sequence of
K{\"a}hler differentials for the maps $R_n \to R \to A$ coming from the
commutative square
\begin{equation}\label{eqn:Key-Key-2}
\xymatrix@C1pc{
R_n \ar[r] \ar[d]_{\phi_n} & R \ar[d] \\
A_n \ar[r] & A_n \otimes_{Q_n} R = A}
\end{equation}
shows that the map $\Omega^1_{{A/{R_n}}} \to \Omega^1_{{A/{R}}}$ is an
isomorphism. On the other hand, the map $\Omega^1_{{A_n}/{R_n}} \otimes_{A_n} A 
\to \Omega^1_{A/{R}}$ is an isomorphism. It follows that the
map $\Omega^1_{{A_n}/{R_n}} \otimes_{A_n} A \to \Omega^1_{A/{R_n}}$ is an
isomorphism too. It follows from ~\eqref{eqn:Key-Key-1} that the map
$D_1({A}/{R_n}) \to D_1({A}/{A_n})$ is an isomorphism.

On the other hand, it follows from ~\eqref{eqn:Key-Key-3} that the square
\begin{equation}\label{eqn:Key-Key-4}
\xymatrix@C1pc{
D_1({A}/{R_n}) \ar[r] \ar[d] & D_1({A}/{Q_n}) \ar[d]^{\eta} \\
D_1({A}/{A_n}) \ar@{=}[r] & D_1({A}/{A_n})}
\end{equation}
commutes, where the left vertical arrow is induced by $\phi_n$
and the right vertical arrow is induced by $\ov{\alpha}$.
Since we have shown above that the left vertical arrow is
an isomorphism, we conclude that $\eta$ is surjective.

To prove the injectivity of $\gamma$, we consider the commutative diagram
\begin{equation}\label{eqn:Key-Key-5}
\xymatrix@C1pc{
D_1({A_n}/{Q_n}) \ar[r]^-{\zeta_n} \ar@{->>}[d]_{\beta_n} & 
\Omega^1_{{Q_n}/{R_n}} \otimes_{Q_n} A_n \ar[d]^{\theta_n} \\
D_1({A_n}/{Q_n}) \otimes_{A_n} A \ar[r]^-{\psi_n} \ar[d]_{\gamma} &
\Omega^1_{{Q_n}/{R_n}} \otimes_{Q_n} A \ar@{=}[d] \\
D_1({A}/{Q_n}) \ar[r] & \Omega^1_{{Q_n}/{R_n}} \otimes_{Q_n} A.}
\end{equation}

Using this diagram, it suffices to show that $\psi_n$ is injective.
Let $w \in D_1({A_n}/{Q_n})$ be such that $\psi_n \circ \beta_n(w) = 0$.
Equivalently, we get $\theta_n \circ \zeta_n(w) = 0$.
Using \lemref{lem:local-AQ}, we can write $w = fdx_2 + (x^{n-1}_1g - fy_4)dx_3
+ (x^{n-1}_1h - fy_3)dx_4$, where $f,g,h \in A_n$.
It follows from ~\eqref{eqn:local-AQ-1} and the definition of
$\ov{\alpha}$ in ~\eqref{eqn:Blow-up-5} that
$\Omega^1_{{Q_n}/{R_n}} \otimes_{Q_n} A \simeq Adx_2 \oplus Adx_3 \oplus Adx_4$.
It follows that $\theta_n \circ \zeta_n(w) = 0$ if and only if
$f = x^{n-1}_1g - fy_4 = x^{n-1}_1h -fy_3 = 0$ modulo $(x_1)$ in $A_n$.
Since $n \ge 2$, this condition is equivalent to $f = 0$ modulo $(x_1)$.

Let us write $f = x_1f'$. We then get
$fdx_2 -fy_4 dx_3 - fy_3 dx_4 = f'(x_1dx_2 - x_1y_4dx_3 - x_1y_3 dx_4)$.
On the other hand, Lemma~\ref{lem:local-AQ} says that the term on the
right hand side of this equality is zero in $D_1({A_n}/{Q_n})$.
Hence, we get
\begin{equation}\label{eqn:Key-Key-6}
\begin{array}{lll}
w & = & fdx_2 + (x^{n-1}_1g - fy_4)dx_3 + (x^{n-1}_1h - fy_3)dx_4 \\
& = &  f'(x_1dx_2 - x_1y_4dx_3 - x_1y_3 dx_4) + (x^{n-1}_1g dx_3 + 
x^{n-1}_1h dx_4) \\
& = & x^{n-1}_1 g dx_3 + x^{n-1}_1 h dx_4.
\end{array}
\end{equation}

We conclude from ~\eqref{eqn:Key-Key-6} that
\begin{equation}\label{eqn:Key-Key-7}
\Ker(\theta_n \circ \zeta_n) = \frac{(x^{n-1}_1)A_n dx_3 \oplus 
(x^{n-1}_1)A_n dx_4}{J_n}.
\end{equation}
Since such elements clearly die via $\beta_n$, we conclude that
$\psi_n$ is injective. The proof of the lemma is now complete.
\end{proof}

\begin{lem}\label{lem:pro-iso-D}
The canonical map $\{\beta_n\}_n: \{{_R\sD_1({E_n}/{P_n})}\}_n \to 
\{{_R\sD_1({E_n}/{P_n})} \otimes_{E_n} \sO_E\}_n$ of sheaves of pro-abelian 
groups on $E$ is an isomorphism.
\end{lem}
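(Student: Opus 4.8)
The plan is to reduce the pro-isomorphism to a levelwise statement and then settle that statement by the explicit local computation already packaged in \lemref{lem:local-AQ}. First I would observe that $\beta_n$ is, by construction, the natural reduction map ${_R\sD_1(E_n/P_n)} \to {_R\sD_1(E_n/P_n)} \otimes_{E_n} \sO_E$, which on a standard affine chart of $E$ is the projection $D_1(A_n/Q_n) \twoheadrightarrow D_1(A_n/Q_n)/x_1 D_1(A_n/Q_n)$, since locally $\sO_E = A_n/(x_1)$ and hence $-\otimes_{E_n}\sO_E$ is $-\otimes_{A_n} A_n/(x_1)$. As tensoring is right exact, $\beta_n$ is automatically surjective and its kernel is the $A_n$-submodule $x_1\cdot D_1(A_n/Q_n)$. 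Thus it suffices to prove that $x_1$ annihilates $D_1(A_n/Q_n)$, i.e. that each $\beta_n$ is injective; this is a purely local assertion which may be verified on the affine charts covering $E$.

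On the chart $\{y_1 \neq 0\}$ I would invoke the presentation $D_1(A_n/Q_n) \simeq G_n/J_n$ of \lemref{lem:local-AQ} and multiply a general generator of $G_n$ by $x_1$. For $w = f\,dx_2 + (x_1^{n-1}g - f y_4)\,dx_3 + (x_1^{n-1}h - f y_3)\,dx_4$ one gets $x_1 w = x_1 f\,dx_2 - x_1 f y_4\,dx_3 - x_1 f y_3\,dx_4$, because the terms carrying $x_1^{n-1}$ are killed by the relation $x_1^n = 0$ in $A_n$. The surviving expression is precisely $f\cdot(x_1\,dx_2 - y_4 x_1\,dx_3 - y_3 x_1\,dx_4)$, an $A_n$-multiple of the first generator of $J_n$, hence zero in $G_n/J_n$. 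This gives $x_1\cdot D_1(A_n/Q_n) = 0$, so $\beta_n$ is injective over this chart.

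The remaining charts of $E = \P^1_k \times_k \P^1_k$ are handled identically: the blow-up picture of \secref{sec:Geom-RM} is symmetric under permuting the homogeneous coordinates, so the local model $A_n = A[x_1]/(x_1^n)$ and the description of \lemref{lem:local-AQ} recur verbatim with $x_1$ replaced by the relevant cone variable. Hence each $\beta_n$ is an isomorphism of Zariski sheaves on $E$. Since the maps $\beta_n$ are compatible with the transition morphisms of the two pro-systems and the indexing function is $\lambda = \id$, they assemble into a strict morphism of pro-sheaves whose levelwise kernels and cokernels all vanish; by \lemref{lem:Pro-ker-Co} this morphism is an isomorphism, which is exactly the claim.

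I expect the only genuine content to be the one-line verification $x_1 w \in J_n$ above, whose validity rests entirely on the explicit shape of $G_n$ and $J_n$ supplied by \lemref{lem:local-AQ}; once that presentation is granted there is no real obstacle, and the pro-theoretic formulation is retained only because this is the form in which the statement is used in the sequel.
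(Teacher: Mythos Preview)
Your argument is correct and in fact sharper than the paper's own proof. You show directly that each $\beta_n$ is an isomorphism of sheaves on $E$, whereas the paper only establishes the pro-statement: it identifies $\Ker(\beta_n)$ with $\Ker(\theta_n\circ\zeta_n)$ via the diagram~\eqref{eqn:Key-Key-5} and the injectivity of $\psi_n$ proved in \lemref{lem:Key-Key}, appeals to the description~\eqref{eqn:Key-Key-7}, and then argues that the transition map $\Ker(\beta_{n+1})\to\Ker(\beta_n)$ is zero because an element represented by $x_1^{\,n}g\,dx_3+x_1^{\,n}h\,dx_4$ dies in $A_n$. Your route bypasses all of this by observing that $\Ker(\beta_n)=x_1\cdot D_1(A_n/Q_n)$ and verifying, straight from the presentation $G_n/J_n$ of \lemref{lem:local-AQ}, that $x_1$ already annihilates $D_1(A_n/Q_n)$. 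The computation $x_1w=f\cdot(x_1\,dx_2-y_4x_1\,dx_3-y_3x_1\,dx_4)\in J_n$ is exactly right.

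What you gain is a levelwise isomorphism rather than a mere pro-isomorphism, so in particular the first assertion of \propref{prop:Pro-iso-vb} (the identification of ${_R\sD_1(E_n/P_n)}\otimes_{E_n}\sO_E$ with ${_R\Omega^1_{\P^3_k/k}|_E(1)}$ for each $n\ge 2$) immediately upgrades to a levelwise isomorphism ${_R\sD_1(E_n/P_n)}\simeq{_R\Omega^1_{\P^3_k/k}|_E(1)}$. The paper's approach, by contrast, reuses machinery already assembled in the proof of \lemref{lem:Key-Key}, trading a slightly weaker conclusion for economy of new computation.
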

\begin{proof}
In the local notations of ~\eqref{eqn:Blow-up-7}, we have seen in the last 
part of the proof of \lemref{lem:Key-Key} (see ~\eqref{eqn:Key-Key-5}) that
$\ker(\beta_n) = \Ker(\psi_n \circ \beta_n) = \Ker(\theta_n \circ \zeta_n)$.
It follows from ~\eqref{eqn:Key-Key-7} that 
the canonical map $\Ker(\theta_{n+1} \circ \zeta_{n+1})
\to \Ker(\theta_{n} \circ \zeta_{n})$ is zero.
We have thus proven that the restriction of the map
$\Ker(\beta_{n+1}) \to \Ker(\beta_n)$ to the affine open
subset $\{y_1 \neq 0\}$ of $\P^3_k$ is zero for every $n \ge 1$.
Since the same argument works for any open subset $\{y_i \neq 0\}$,
we conclude that the map of Zariski sheaves
$\Ker(\beta_{n+1}) \to \Ker(\beta_n)$ is zero. Since each $\beta_n$ is
clearly surjective, we are done.
\end{proof}

\begin{lem}\label{lem:pro-iso-O}
Let $\fm_R$ denote the ideal of $R[x_1, \cdots , x_4]$ defining the origin in
$\A^4_R$. Then, there exists a commutative diagram
\begin{equation}\label{eqn:pro-iso-O-1}
\xymatrix@C1pc{
{\fm_R}/{\fm^2_R} \otimes_k \sO_{\P^3_k} \ar[r]^-{\simeq} \ar@{->>}[d] &
{_R\sD_1({P}/{P'_n})} \otimes_k \sO_{\P^3_k} \ar[r]^-{\simeq} \ar@{->>}[d] &
{_R\sD_1({\P^3_k}/{P'_n})} \ar@{->>}[d] \\
{\fm_R}/{\fm^2_R} \otimes_k \sO_{E} \ar[r]_-{\simeq}  &
{_R\sD_1({P}/{P_n})} \otimes_k \sO_{E} \ar[r]_-{\simeq} &
{_R\sD_1({E}/{P_n})}.}
\end{equation}
\end{lem}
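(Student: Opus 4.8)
The plan is to reduce the whole diagram to two standard facts about André--Quillen homology --- that $\sD_1$ of a surjection is the conormal module, and that $\sD_1$ trivializes along a smooth factor --- together with the functoriality of $\sD_1$ in the closed embeddings relating the primed and unprimed towers. Throughout I regard all six terms as Zariski sheaves on $\P^3_k$, the bottom row being supported on $E \inj \P^3_k$.

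First I would dispose of the two left-hand isomorphisms, which are pure conormal-module identifications. Recall that for any surjection $A \surj A/I$ the first André--Quillen homology is canonically $I/I^2$. Applying this over $R$ to the augmentations $R[x_1,\dots,x_4]/\fm^n_R \surj R$ and $Q_n \surj R$ (with $Q_n$ as in \eqref{eqn:Blow-up-7}) yields ${_R\sD_1(P/P'_n)} \simeq \fm_R/\fm^2_R$ and ${_R\sD_1(P/P_n)} \simeq \bar{\fm}_R/\bar{\fm}^2_R$ for $n \ge 2$ (the square of the augmentation ideal being $\fm^2_R/\fm^n_R$, resp. $\bar{\fm}^2_R/\bar{\fm}^n_R$); the case $n=1$ is trivial. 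Because the Segre relation $x_1x_2-x_3x_4$ is homogeneous of degree two, the quotient $R[x]\surj R[M]$ induces an isomorphism on the degree-one conormal pieces $\fm_R/\fm^2_R \xrightarrow{\simeq} \bar{\fm}_R/\bar{\fm}^2_R$. Tensoring with $\sO_{\P^3_k}$ (resp. $\sO_E$) gives the two left-hand arrows.

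Next I would prove the two right-hand isomorphisms by transitivity of André--Quillen homology. The geometric input is that both structure maps $\P^3_k = E' \to P'_n$ and $E \to P_n$ factor through the vertex $P$: on the chart $\{y_1 \neq 0\}$ this says the composite $Q_n \xrightarrow{\ov{\alpha}} A_n \to A$ (and its primed analogue built from \eqref{eqn:Blow-up-6}) sends each $x_i$ to $0$, which is read off directly from the formulas for $\alpha$ and $\alpha'$ in \eqref{eqn:Blow-up-5}--\eqref{eqn:Blow-up-6}. Since $E$ and $\P^3_k$ are smooth over $R$, all higher relative André--Quillen homology over $R$ vanishes, so the Jacobi--Zariski sequence for the composite $\sO_{P_n} \to R \to \sO_E$ collapses to an isomorphism ${_R\sD_1(P/P_n)} \otimes_R \sO_E \xrightarrow{\simeq} {_R\sD_1(E/P_n)}$, and likewise ${_R\sD_1(P/P'_n)} \otimes_R \sO_{\P^3_k} \xrightarrow{\simeq} {_R\sD_1(\P^3_k/P'_n)}$; as the relevant modules are $R$-free, these base changes agree with the $\otimes_k$ appearing in the statement. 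Flatness of $R$ over $k$ guarantees that these relative-over-$R$ statements are the base change of the corresponding statements over $k$, exactly as in the computations underlying \lemref{lem:Key-Key}.

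Finally I would assemble the diagram. The vertical arrows are the restriction maps induced by the compatible closed immersions $E \inj \P^3_k$, $X \inj \A^4_R$ and $P_n \inj P'_n$ (with $P = P$), and the two squares commute by functoriality of the conormal module and of $\sD_1$ in these embeddings. The left vertical arrow is $\id_{\fm_R/\fm^2_R} \otimes (\sO_{\P^3_k} \surj \sO_E)$, which is surjective because $E \inj \P^3_k$ is a closed immersion and $\fm_R/\fm^2_R$ is $R$-free; since every horizontal arrow is an isomorphism, commutativity then forces the middle and right verticals to be surjective as well. I expect the only delicate point to be the verification that the two structure maps genuinely factor through the vertex --- it is exactly this that licenses the use of smoothness of $E$ and $\P^3_k$ --- but this is settled by the explicit local coordinates already recorded in \eqref{eqn:Blow-up-5} and \eqref{eqn:Blow-up-6}.
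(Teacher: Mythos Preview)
Your proposal is correct and follows essentially the same approach as the paper's proof: both identify the left-hand isomorphisms via the conormal module of the surjection $P_n \surj P$ (resp.\ $P'_n \surj P$), and both obtain the right-hand isomorphisms from the Jacobi--Zariski sequence for $E \to P \inj P_n$ together with smoothness of $E$ (resp.\ $\P^3_k$) over $P$. Your write-up is simply more explicit --- you spell out why $\fm_R/\fm^2_R \simeq \bar{\fm}_R/\bar{\fm}^2_R$ (the defining relation lies in degree two) and you verify the factorization of $E \to P_n$ through $P$ in local coordinates, whereas the paper takes these for granted and compresses the whole argument into two sentences.
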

\begin{proof}
The vertical arrows are the restriction maps induced by the closed
embedding $E \inj \P^3_k$. We prove that the horizontal arrows on the bottom
are isomorphisms and the same proof applies to the arrows on the top.

The Jacobi-Zariski exact sequence for the maps $E \to P = \Spec(k) \inj P_n$
and smoothness of $E$ over $k$ yield isomorphisms
${\fm_R}/{\fm^2_R} \otimes_k \sO_E \xrightarrow{\simeq} 
{_R\sD_1({P}/{P_n})} \otimes_k \sO_E \xrightarrow{\simeq} {_R\sD_1({E}/{P_n})}$.
The commutativity of the squares follows from the naturality of these
isomorphisms.
\end{proof}

\begin{prop}\label{prop:Pro-iso-vb}
For any integer $n \ge 2$, there is a natural isomorphism
of Zariski sheaves on $E$:
\[
{_R\sD_1({E_n}/{P_n})} \otimes_{E_n} \sO_E \xrightarrow{\simeq}
{_R\Omega^1_{{\P^3_k}/k}|_E (1)}.
\]
In particular, there is an isomorphism of pro-sheaves of $\sO_E$-modules
$\{{_R\sD_1({E_n}/{P_n})}\}_n \xrightarrow{\simeq}  
{_R\Omega^1_{{\P^3_k}/k}|_E (1)}$.
\end{prop}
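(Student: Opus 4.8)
The plan is to read the isomorphism off the commutative diagram of short exact sequences \eqref{eqn:Key-Key-0} of \lemref{lem:Key-Key}, by first identifying its top row, which lives on $E' \simeq \P^3_k$, with the (twisted) Euler sequence, and then restricting to $E$ along the vertical maps. Throughout I fix $n \ge 2$, the bound inherited from \lemref{lem:Key-Key}.

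First I would use \lemref{lem:pro-iso-O} to rewrite the middle and right terms of both rows of \eqref{eqn:Key-Key-0}. The two middle terms become ${\fm_R}/{\fm^2_R} \otimes_k \sO_{\P^3_k}$ and ${\fm_R}/{\fm^2_R} \otimes_k \sO_E$, trivial bundles of rank $4$ with basis $dx_1, \dots, dx_4$, while the right terms are ${_R\sO_{\P^3_k}(1)}$ and ${_R\sO_E(1)}$. Under these identifications the middle vertical map of \eqref{eqn:Key-Key-0} is $\id \otimes \res$ for the restriction $\sO_{\P^3_k} \surj \sO_E$, and the right vertical map is the restriction $\sO_{\P^3_k}(1) \surj \sO_E(1)$; in particular both are surjective, and on these two columns the diagram is simply ``restriction from $\P^3_k$ to $E$''.

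The crux is to compute the top arrow $\eta'\colon {_R\sD_1({E'}/{P'_n})} \to {_R\sO_{\P^3_k}(1)}$ and to recognise it as the Euler map $dx_i \mapsto x_i$. By \lemref{lem:pro-iso-O} the source is freely generated by $dx_1, \dots, dx_4$, and since $\eta'$ is $\sO_{\P^3_k}$-linear it is determined by the global sections $\eta'(dx_i) \in H^0(\P^3_k, \sO(1))$; hence it suffices to compute on the chart $\{y_1 \neq 0\}$ with the coordinates of \eqref{eqn:Blow-up-6}. There $\eta'$ is the Jacobi--Zariski map into the conormal sheaf ${\sI'}/{\sI'^2} = {(x_1)}/{(x^2_1)} \simeq \sO_{\P^3_k}(1)$ of the embedding $E' \inj V'$, and it is induced on conormal modules by the $Q'_n$-algebra structure on $\sO_{V'} = R[x_1, y_2, y_3, y_4]$, for which $x_1 \mapsto x_1$ and $x_i \mapsto y_i x_1$ when $i \ge 2$. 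Thus $\eta'(dx_1) = x_1$ and $\eta'(dx_i) = y_i x_1 = x_i$ in homogeneous coordinates, so $\eta'$ is exactly the Euler map, its kernel is ${_R\Omega^1_{{\P^3_k}/k}(1)}$, and the top row of \eqref{eqn:Key-Key-0} is the twisted Euler sequence, giving ${_R\sD_1({E'_n}/{P'_n})} \otimes_{E'_n} \sO_{E'} \simeq {_R\Omega^1_{{\P^3_k}/k}(1)}$ via $\gamma'$.

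Finally I would restrict to $E$. Because the middle vertical map is surjective and the right square of \eqref{eqn:Key-Key-0} commutes, the bottom arrow $\eta$ is forced to send $dx_i|_E \mapsto x_i|_E$, i.e. $\eta$ is the Euler map restricted to $E$. As the twisted Euler sequence is a short exact sequence of locally free sheaves, hence locally split, tensoring it with $\sO_E$ keeps it exact, so $\Ker(\eta) \simeq {_R\Omega^1_{{\P^3_k}/k}(1)}|_E = {_R\Omega^1_{{\P^3_k}/k}|_E (1)}$. Exactness of the bottom row then yields ${_R\sD_1({E_n}/{P_n})} \otimes_{E_n} \sO_E = \im(\gamma) = \Ker(\eta) \simeq {_R\Omega^1_{{\P^3_k}/k}|_E (1)}$, the asserted isomorphism. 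For the ``in particular'' statement I would compose this (which is natural in $n$) with the isomorphism of pro-sheaves $\{{_R\sD_1({E_n}/{P_n})}\}_n \xrightarrow{\simeq} \{{_R\sD_1({E_n}/{P_n})} \otimes_{E_n} \sO_E\}_n$ of \lemref{lem:pro-iso-D}; since the target is the constant pro-sheaf ${_R\Omega^1_{{\P^3_k}/k}|_E (1)}$, this gives the claimed isomorphism of pro-sheaves. I expect the main obstacle to be the local André--Quillen bookkeeping in the third step: one must correctly match the Jacobi--Zariski map with the conormal generator under the identification of \lemref{lem:pro-iso-O} and verify that the resulting global sections are precisely the coordinate forms $x_1, \dots, x_4$, rather than some other frame, so that the kernel is genuinely $\Omega^1_{\P^3_k}(1)$.
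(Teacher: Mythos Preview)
Your proposal is correct and follows essentially the same approach as the paper: both identify the bottom row of \eqref{eqn:Key-Key-0} with the restriction to $E$ of the twisted Euler sequence via \lemref{lem:pro-iso-O}, check that $\eta$ agrees with the Euler map $e_i\mapsto x_i$, and then invoke \lemref{lem:pro-iso-D} for the pro-statement. The only cosmetic difference is that you first carry out the identification on the top row over $\P^3_k$ and then restrict, whereas the paper works directly on $E$ and simply remarks that the equality $\nu=\eta$ ``follows from the proof of \lemref{lem:Key-Key}''; your explicit local computation of $\eta'(dx_i)=y_ix_1$ is precisely the verification the paper leaves implicit.
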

\begin{proof}
In view of \lemref{lem:pro-iso-D}, we only have to prove the first isomorphism.
If we let $R[x_1, \cdots , x_4] = B = \oplus_{i \ge 0} B_i$, then recall that
the canonical map $\nu:B[-1]^{4} \to B$, given by $e_i \mapsto x_i$,
defines a surjective map of sheaves $\sO_{\P^3_k}(-1)^{4} \surj \sO_{\P^3_k}$ 
whose kernel is $\Omega^1_{{\P^3_k}/k}$. Twisting by $\sO_{\P^3_k}(1)$ and 
restricting to $E$, we get the exact Euler sequence
\begin{equation}\label{eqn:Pro-iso-vb-0}
0 \to \Omega^1_{{\P^3_k}/k}|_E(1) \to \sO_{E}^4 \xrightarrow{\nu} 
\sO_{E}(1) \to 0.
\end{equation}
Using the canonical isomorphism ${\fm}/{\fm^2} \xrightarrow{\simeq}
B_1 \xrightarrow{\simeq} A_1 \otimes_k A'_1$ (see notation below
~\eqref{eqn:H0-Omega-1}), where the latter map is an isomorphism
induced by $\phi_k$ in ~\eqref{eqn:GMA-0}, we can also write 
the middle term of the above exact sequence as
${\fm}/{\fm^2} \otimes_k \sO_E$.
Combining this with \lemref{lem:pro-iso-O}, the exact sequence
can be written in the form
\[
0 \to \Omega^1_{{\P^3_k}/k}|_E(1) \to \sD_1({E}/{P_n}) \xrightarrow{\nu} 
\sO_{E}(1) \to 0
\]
and one checks from the proof of \lemref{lem:Key-Key} that the map
$\nu$ as defined above coincides with the map $\eta$ in ~\eqref{eqn:Key-Key-0}.
Hence, we conclude the proof by tensoring the exact sequence with $R$ over $k$ 
and using \lemref{lem:Key-Key}.
\end{proof}

\begin{cor}\label{cor:Pro-vanish-D}
The pro-abelian group $\{H^i(E_n, {_R\sD_1({E_n}/{P_n})})\}_n$ is zero
for $i = 0,1$.
\end{cor}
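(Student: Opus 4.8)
The plan is to reduce the pro-cohomology to the cohomology of a single vector bundle on $E$ by invoking the pro-isomorphism already in hand. Indeed, \propref{prop:Pro-iso-vb} provides an isomorphism of pro-sheaves of $\sO_E$-modules
\[
\{{_R\sD_1({E_n}/{P_n})}\}_n \xrightarrow{\simeq} {_R\Omega^1_{{\P^3_k}/k}|_E(1)},
\]
whose target is a constant pro-sheaf. Since each $E_n$ has the same underlying topological space as $E$, sheaf cohomology of abelian groups satisfies $H^i(E_n, \sF) = H^i(E, \sF)$, so applying $H^i$ to the displayed pro-isomorphism yields an isomorphism of pro-abelian groups between $\{H^i(E_n, {_R\sD_1({E_n}/{P_n})})\}_n$ and the constant system $\{H^i(E, {_R\Omega^1_{{\P^3_k}/k}|_E(1)})\}_n$. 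It therefore suffices to prove that the single group $H^i(E, {_R\Omega^1_{{\P^3_k}/k}|_E(1)})$ vanishes for $i = 0, 1$.

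To establish this, I would take the long exact cohomology sequence of the Euler sequence ~\eqref{eqn:Pro-iso-vb-0}, tensored with $R$ over $k$:
\[
0 \to {_R\Omega^1_{{\P^3_k}/k}|_E(1)} \to {_R\sO_E}^{4} \xrightarrow{\nu} {_R\sO_E(1)} \to 0.
\]
As in the cohomology computations at the end of \lemref{lem:surjection-H0}, the cohomology of the two outer twisting sheaves is governed by \lemref{lem:Coh-main} (applied over $R$): one has $H^0(E, {_R\sO_E})^{4} \simeq R^4$ and $H^0(E, {_R\sO_E(1)}) \simeq A_{R,1} \otimes_R A'_{R,1}$, which is free of rank $4$ on the monomials $z_1z_3, z_1z_4, z_2z_3, z_2z_4$, while $H^1(E, {_R\sO_E}) = H^1(E, {_R\sO_E(1)}) = 0$. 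The long exact sequence thus collapses to
\[
0 \to H^0(E, {_R\Omega^1_{{\P^3_k}/k}|_E(1)}) \to R^4 \xrightarrow{\nu} R^4 \to H^1(E, {_R\Omega^1_{{\P^3_k}/k}|_E(1)}) \to 0,
\]
so both outer terms vanish precisely when $\nu$ is an isomorphism.

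The heart of the matter is thus to check that $\nu$ is an isomorphism on global sections, and this is exactly where the combinatorics of the Segre embedding enters. By construction of the Euler sequence, $\nu$ sends the standard basis vector $e_i$ to the class of the coordinate $x_i$ regarded as a section of $\sO_E(1)$; by the identification ~\eqref{eqn:GMA-0} these classes are $z_1z_3, z_2z_4, z_1z_4, z_2z_3$, which is precisely the monomial basis of $H^0(E, {_R\sO_E(1)}) \simeq A_{R,1} \otimes_R A'_{R,1}$. Hence $\nu$ carries a basis to a basis and is an isomorphism of free $R$-modules of rank $4$, forcing $H^0 = H^1 = 0$. I do not expect a genuine obstacle here beyond bookkeeping; the only point requiring care is to confirm that the map $\nu$ coming from the Euler sequence really is the coordinate map $e_i \mapsto x_i$, and hence agrees with the map $\eta$ of ~\eqref{eqn:Key-Key-0}, a fact already recorded in the proof of \propref{prop:Pro-iso-vb}.
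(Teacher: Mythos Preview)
Your proof is correct and follows essentially the same route as the paper: reduce via \propref{prop:Pro-iso-vb} to the vanishing of $H^i(E, {_R\Omega^1_{{\P^3_k}/k}|_E(1)})$ for $i=0,1$, then use the Euler sequence~\eqref{eqn:Pro-iso-vb-0} together with \lemref{lem:Coh-main} to conclude that $\nu$ is an isomorphism on $H^0$ and that $H^1(E,\sO_E)=0$. You supply more detail than the paper on why $\nu$ carries a basis to a basis, but the strategy is identical.
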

\begin{proof}
By \propref{prop:Pro-iso-vb}, it suffices to show that
$H^i(E, {_R\Omega^1_{{\P^3_k}/k}|_E(1)}) \simeq H^i(E, \Omega^1_{{\P^3_k}/k}|_E(1)) 
\otimes_k$
\nolinebreak
$R$ is zero for $i = 0,1$. Using ~\eqref{eqn:Pro-iso-vb-0}, we need to show that
$H^0(E, \sO^4_E) \xrightarrow{\nu} H^0(E, \sO_E(1))$ is an isomorphism and
$H^1(E, \sO_E) = 0$. But both follow from \lemref{lem:Coh-main}.
\end{proof}

\begin{thm}\label{thm:Surj-Main}
The blow-up morphism $\pi: V \to X$ in ~\eqref{eqn:Main-diagram}
induces a surjective map of pro-abelian groups
\[
\{{_R\wt{HC}_1(P_n)}\}_n \stackrel{\pi^*}{\surj}
\{H^0(E_n, {_R\wt{\sHC}_{1,E_n}})\}_n.
\]
\end{thm}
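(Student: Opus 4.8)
The plan is to strip everything down to the top Hodge (weight one) piece and reduce the statement to a surjectivity assertion for sheaves of K\"ahler differentials. By the Hodge decomposition recorded in the proof of \lemref{lem:vanish-cyclic-1} one has ${_R\tHC_{1, E_n}} \simeq {_R\tHC^{(1)}_{1, E_n}}$, so the target is $H^0(E_n, {_R\tHC^{(1)}_{1, E_n}})$, and by \lemref{lem:surjection-H0} this is a quotient of $H^0(E_n, {_R\wt{\Omega}^1_{E_n}})$ through the cokernel of $d_0$. On the source, the weight-zero part of $HC_1$ vanishes, so ${_R\wt{HC}_1(P_n)} = {_R\wt{HC}^{(1)}_1(P_n)}$ is the cokernel of $d_0 \colon {_R\wt{\sO}_{P_n}} \to {_R\wt{\Omega}^1_{P_n}}$, whose global sections over the affine scheme $P_n$ are $H^0(P_n, {_R\wt{\Omega}^1_{P_n}})$. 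Since $\pi^*$ is induced by pullback of differential forms and is compatible with $d_0$ and with the surjections from forms onto the weight-one cyclic homology, it suffices to prove the pro-statement
\[
(\star) \qquad \{H^0(P_n, {_R\wt{\Omega}^1_{P_n}})\}_n \xrightarrow{\ \pi^*\ } \{H^0(E_n, {_R\wt{\Omega}^1_{E_n}})\}_n \ \text{is surjective.}
\]

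To attack $(\star)$ I would run the Jacobi--Zariski (transitivity) sequence attached to $\Spec(k) \to P_n$ and $p \colon E_n \to P_n$, which reads
\[
{_R\sD_1({E_n}/{P_n})} \xrightarrow{\gamma} p^*({_R\wt{\Omega}^1_{P_n}}) \xrightarrow{\rho} {_R\wt{\Omega}^1_{E_n}} \to {_R\Omega^1_{{E_n}/{P_n}}} \to 0.
\]
A local computation in the coordinates of ~\eqref{eqn:Blow-up-7} (using that $A_n = R_n[y_3, y_4]$ is smooth over $R_n$, so that $D_1(A_n/k) \simeq D_1(R_n/k) \otimes_{R_n} A_n$ is a quotient of $D_1(Q_n/k)\otimes_{Q_n} A_n$) shows that $\gamma$ is injective; hence $I := \operatorname{im}(\gamma) \simeq {_R\sD_1({E_n}/{P_n})}$. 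Since $H^0(E_n, {_R\Omega^1_{{E_n}/{P_n}}}) = 0$ by \lemref{lem:H0-Omega}, every global section of ${_R\wt{\Omega}^1_{E_n}}$ lies in $N := \operatorname{im}(\rho)$, so $H^0(E_n, {_R\wt{\Omega}^1_{E_n}}) = H^0(E_n, N)$. Finally, because $\pi$ contracts $E$ to the vertex and $H^1(E, \sO_E(j)) = 0$ for all $j$ by \lemref{lem:Coh-main}, one has $\pi_* \sO_{E_n} = \sO_{P_n}$; the projection formula then identifies $H^0(P_n, {_R\wt{\Omega}^1_{P_n}})$ with $H^0(E_n, p^*({_R\wt{\Omega}^1_{P_n}}))$, so that the map in $(\star)$ becomes the map $H^0(E_n, p^*({_R\wt{\Omega}^1_{P_n}})) \to H^0(E_n, N)$ arising from the short exact sequence $0 \to I \to p^*({_R\wt{\Omega}^1_{P_n}}) \to N \to 0$.

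The surjectivity in $(\star)$ is thus governed by the connecting map into $\{H^1(E_n, I)\}_n$, and this is where the heart of the matter lies: by the injectivity of $\gamma$ we have $I \simeq {_R\sD_1({E_n}/{P_n})}$, and \corref{cor:Pro-vanish-D} gives precisely $\{H^1(E_n, {_R\sD_1({E_n}/{P_n})})\}_n = 0$. Hence the connecting map vanishes in the pro-category, $(\star)$ holds, and the theorem follows. I expect the main obstacle to be exactly this last input, namely controlling the relative André--Quillen sheaf ${_R\sD_1({E_n}/{P_n})}$: its cohomology does \emph{not} vanish termwise, and the whole force of \propref{prop:Pro-iso-vb} --- the pro-isomorphism $\{{_R\sD_1({E_n}/{P_n})}\}_n \simeq {_R\Omega^1_{{\P^3_k}/k}|_E(1)}$, together with the cohomology vanishing of the Euler bundle in \lemref{lem:Coh-main} --- is what forces the pro-vanishing of $H^1$. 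A secondary point to check with care is the commutativity of $\pi^*$ with the Hodge decompositions and with the forms-to-cyclic-homology surjections, which is what legitimizes the reduction to $(\star)$ and the identification of the obstruction with $\{H^1(E_n, {_R\sD_1({E_n}/{P_n})})\}_n$.
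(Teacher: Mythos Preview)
Your proposal is correct and follows the same overall strategy as the paper: reduce via \lemref{lem:surjection-H0} to the surjectivity $(\star)$ on relative $1$-forms, analyze this with the Jacobi--Zariski sequence for $E_n\to P_n$, identify the obstruction with $\{H^1(E_n,{_R\sD_1(E_n/P_n)})\}_n$, and annihilate it using \corref{cor:Pro-vanish-D}. Two packaging differences are worth noting. First, the paper runs Jacobi--Zariski \emph{relative to} $P$ (yielding the pro-exact sequence~\eqref{eqn:Surj-Main-4} with ${_R\Omega^1_{P_n/P}}$ in the middle) and then patches in the $\Omega^1_R$-summand separately via~\eqref{eqn:Surj-Main-6}--\eqref{eqn:Surj-Main-9}; you instead work with absolute $\wt{\Omega}^1$ throughout, which is slightly more economical. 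Second, the paper gets left-exactness only at the pro level, from $\{{_R\sD_1(E_n/P)}\}_n=0$ (Lemmas~\ref{lem:AQ-0}--\ref{lem:AQ-1}), whereas your termwise injectivity of $\gamma$ (via smoothness of $A_n$ over $R_n$ and the factorization $R_n\to Q_n\to A_n$) sidesteps that lemma. One caution on your displayed four-term sequence: as written it is not exact at the right end. In fact the composite ${_R\wt{\Omega}^1_{E_n}}\to{_R\Omega^1_{E_n/P_n}}$ is the zero map (locally, every generator $dx_i$ of $\wt{\Omega}^1_{A_n/k}$ dies in $\Omega^1_{A_n/Q_n}$), and a direct check in the coordinates of~\eqref{eqn:Blow-up-7} shows that $\rho$ is already surjective --- for instance $dx_3-y_3\,dx_1\mapsto x_1\,dy_3$. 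So your appeal to \lemref{lem:H0-Omega} at that step is harmless but unnecessary: the short exact sequence $0\to I\to p^*({_R\wt{\Omega}^1_{P_n}})\to{_R\wt{\Omega}^1_{E_n}}\to 0$ holds on the nose, and the rest of your argument goes through unchanged.
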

\begin{proof}
In view of \lemref{lem:surjection-H0}, it suffices to show that the map
of pro-abelian groups $\{{_R\wt{\Omega}^1_{P_n}}\}_n \to 
H^0(E_n, {_R\wt{\Omega}^1_{E_n}})\}_n$ is surjective.
We shall in fact show that this map is an isomorphism.

For every $n \ge 1$, we have the
Jacobi-Zariski exact sequence of Zariski sheaves on $E$:
\begin{equation}\label{eqn:Surj-Main-2}
{_R\sD_1({E_n}/{P})} \to {_R\sD_1({E_n}/{P_n})} \to {_R\Omega^1_{{P_n}/{P}}} 
\otimes_{{P_n}} \sO_{E_n} \to {_R\Omega^1_{{E_n}/{P}}} \to 
{_R\Omega^1_{{E_n}/{P_n}}}
\to 0.
\end{equation}
It follows from \lemref{lem:AQ-1} that $\{{_R\sD_1({E_n}/{P})}\}_n = 0$.
We thus get a commutative diagram of exact sequences of sheaves of 
pro-abelian groups
\begin{equation}\label{eqn:Surj-Main-3}
\xymatrix@C1pc{
0 \ar[r] & \{{_R\sD_1({E_n}/{P_n})}\}_n \ar[r] &
\{{_R\Omega^1_{{P_n}/{P}}} \otimes_{{P_n}} \sO_{E_n}\}_n \ar[r] &
\{{_R\Omega^1_{{E_n}/{P}}}\}_n \ar[r] \ar[d] & 
\{{_R\Omega^1_{{E_n}/{P_n}}}\}_n \ar[r] \ar[d] & 0 \\
& & & {_R\Omega^1_{{E}/{P}}} \ar@{=}[r] & {_R\Omega^1_{{E}/{P}}}.}
\end{equation}

Taking the kernels and using \lemref{lem:H0-Omega}, we get an exact sequence
\begin{equation}\label{eqn:Surj-Main-4}
0 \to \{{_R\sD_1({E_n}/{P_n})}\}_n \to \{{_R\Omega^1_{{P_n}/{P}}} 
\otimes_{{P_n}} \sO_{E_n}\}_n \to \{{_R\wt{\Omega}^1_{{E_n}/{P}}}\}_n \to 0.
\end{equation}

Considering the cohomology and using \corref{cor:Pro-vanish-D},
we get $\{H^0(P_n, {_R\Omega^1_{{P_n}/{P}}}\otimes_{{P_n}} \sO_{E_n})\}_n 
\xrightarrow{\simeq} \{H^0(E_n, {_R\wt{\Omega}^1_{{E_n}/{P}}}\}_n$.
On the other hand, we have
$H^0(P_n, {_R\Omega^1_{{P_n}/{P}}}\otimes_{{P_n}} \sO_{E_n})
\simeq {_R\Omega^1_{{Q_n}/k}} \otimes_{Q_n} H^0(E_n, \sO_{E_n})$
(recall here that $P_n = \Spec(Q_n)$).
Since $E/k$ is geometrically integral and is the only fiber of the
map $E_n \to P_n$, it follows that the canonical map $Q_n \to
H^0(E_n, \sO_{E_n})$ is an isomorphism (see, for instance,
\cite[Chap.~5, Exc.~3.12]{Liu}).
We conclude that 
$H^0(P_n, {_R\Omega^1_{{P_n}/{P}}}\otimes_{{P_n}} \sO_{E_n})
\simeq {_R\Omega^1_{{Q_n}/k}}$. In particular, the canonical map
\begin{equation}\label{eqn:Surj-Main-5}
\{{_R\Omega^1_{{Q_n}/k}}\}_n \to \{H^0(E_n, {_R\wt{\Omega}^1_{{E_n}/{P}}})\}_n
\end{equation}
is an isomorphism.

We now consider the commutative diagram of short exact sequences
\begin{equation}\label{eqn:Surj-Main-6}
\xymatrix@C1pc{
0 \ar[r] & \Omega^1_R \otimes_k \sO_{E_n} \ar[r] \ar[d] &
{_R\Omega^1_{E_n}} \ar[r] \ar[d] & {_R\Omega^1_{{E_n}/{P}}} \ar[r] \ar[d] & 0 \\
0 \ar[r] & \Omega^1_R \otimes_k \sO_{E} \ar[r] &
{_R\Omega^1_{E}} \ar[r] & {_R\Omega^1_{{E}/{P}}} \ar[r] & 0,}
\end{equation}
where the rows are exact from the left because they are locally split.
Taking the kernels, we get a short exact sequence
\begin{equation}\label{eqn:Surj-Main-7}
0 \to \Omega^1_R \otimes_k {\sI}/{\sI^n} \to
{_R\wt{\Omega}^1_{E_n}} \to {_R\wt{\Omega}^1_{{E_n}/{P}}} \to 0.
\end{equation}
Using the filtration ~\eqref{eqn:extra}, \lemref{lem:Coh-main} and induction
on $n \ge 1$, we see that $H^1(E_n, \Omega^1_R \otimes_k {\sI}/{\sI^n})
\simeq \Omega^1_R \otimes_k H^1(E_n, {\sI}/{\sI^n}) = 0$
and hence there is a short exact sequence
\begin{equation}\label{eqn:Surj-Main-8}
0 \to \Omega^1_R \otimes_k H^0(E_n, {\sI}/{\sI^n}) \to
H^0(E_n, {_R\wt{\Omega}^1_{E_n}}) \to H^0(E_n, {_R\wt{\Omega}^1_{{E_n}/{P}}}) \to 0.
\end{equation}

Comparing this with a similar exact sequence
for $P_n$, we get a commutative diagram of short exact sequences of
pro-abelian groups

\begin{equation}\label{eqn:Surj-Main-9}
\xymatrix@C1pc{
0 \ar[r] & \{\Omega^1_R \otimes_k {\fm_R}/{\fm^n_R}\}_n \ar[r] \ar[d] &
{\{_R\wt{\Omega}^1_{P_n}}\}_n \ar[r] \ar[d] & \{{_R\wt{\Omega}^1_{{P_n}/{P}}}\}_n
\ar[d] \ar[r] & 0 \\
0 \ar[r] & \{\Omega^1_R \otimes_k H^0(E_n, {\sI}/{\sI^n})\}_n \ar[r] &
\{H^0(E_n, {_R\wt{\Omega}^1_{E_n}})\}_n \ar[r] & 
\{H^0(E_n, {_R\wt{\Omega}^1_{{E_n}/{P}}})\}_n
\ar[r] & 0.}
\end{equation}
 
We have shown above that the map $Q_n \to H^0(E_n, \sO_{E_n})$ is an isomorphism
for every $n \ge 1$. Using the compatible splittings
$Q_n \simeq k \oplus {\fm}/{\fm^n}$ and $H^0(E_n, \sO_{E_n}) \simeq
H^0(E, \sO_E) \oplus H^0(E_n, {\sI}/{\sI^n})$, one checks easily that
${\fm}/{\fm^n} \to H^0(E_n, {\sI}/{\sI^n})$ is an isomorphism.
In particular, the left vertical arrow in ~\eqref{eqn:Surj-Main-9}
is an isomorphism. The right vertical arrow is an isomorphism
by ~\eqref{eqn:Surj-Main-5}. We conclude that the middle vertical 
arrow is also an isomorphism.
This completes the proof of the theorem.
\end{proof} 

\section{Generalization of \thmref{thm:Surj-Main}}\label{sec:Alg}
We do not know if \thmref{thm:Surj-Main} is valid for higher cyclic
homology groups in general. However, we shall show in this section that
this is indeed the case for $i \neq 2$ when the ground field $k$ is algebraic 
over $\Q$. This will be obtained as a special case of the 
more general result that we shall prove when $k$ is any field of 
characteristic zero.

Let $\wt{p}: V \to E$ be the line bundle map
of ~\eqref{eqn:Main-diagram}.
Using the fact that $\Omega^1_{V/E} \simeq \wt{p}^*(\sO_E(1))$, the 
fundamental sequence for K{\"a}hler differentials gives rise to
exact sequence of Zariski sheaves
\[
0 \to \wt{p}^*(\Omega^i_{E/k}) \to \Omega^i_{V/k} \to 
\wt{p}^*(\Omega^{i-1}_{E/k}(1)) \to 0
\]
for every $i \ge 1$. Since this is an exact sequence of vector bundles,
it remains exact on restriction to $E_n$. Taking this restriction and
subsequent push-forward via the map $p:E_n \to E$, we get a
short exact sequence of Zariski sheaves on $E$:
\begin{equation}\label{eqn:Alg-0}
0 \to \Omega^i_{E/k}\otimes_{E} p_*(\sO_{E_n}) \to p_*(\Omega^i_{V/k}|_{E_n}) \to
\Omega^{i-1}_{E/k}(1) \otimes_{E} p_*(\sO_{E_n}) \to 0.
\end{equation}

Using the exact sequence
\[
0 \to \Omega^i_{E/k}(n-1) \to \Omega^i_{E/k}\otimes_{E} 
p_*(\sO_{E_n}) \xrightarrow{\alpha} 
\Omega^i_{E/k}\otimes_{E} p_*(\sO_{E_{n-1}}) \to 0
\]
and an induction on $n \ge 1$, it follows from \lemref{lem:Coh-main} that
$H^0(E, \Omega^i_{E/k}\otimes_{E} p_*(\sO_{E_n})) \simeq$ \\
$\stackrel{n-1}{\underset{j =0}\oplus} H^0(E, \Omega^i_{E/k}(j))$.
Moreover, we have a short exact sequence
\begin{equation}\label{eqn:Alg-1}
0 \to \stackrel{n-1}{\underset{j =0}\oplus} H^0(E, \Omega^i_{E/k}(j)) \to
H^0(E_n, \Omega^i_{V/k}|_{E_n}) \xrightarrow{\alpha} 
\stackrel{n}{\underset{j =1}\oplus} H^0(E, \Omega^{i-1}_{E/k}(j)) \to 0.
\end{equation}


It is easy to check that for every $i \ge 1$, the map
$d: \wt{p}_*(\Omega^{i-1}_{V/k}) \to \wt{p}_*(\Omega^{i}_{V/k})$
induces $d: p_*(\Omega^{i-1}_{V/k}|_{E_{n+1}})
\to p_*(\Omega^{i}_{V/k}|_{E_n})$.
In particular, there is a (strict) map of sheaves of pro-abelian groups
\begin{equation}\label{eqn:ext-d}
d: \{p_*(\Omega^{i-1}_{V/k}|_{E_n})\}_n \to \{p_*(\Omega^{i}_{V/k}|_{E_n})\}_n
\end{equation}
on $E$.

We now claim that the composite map (see ~\eqref{eqn:Alg-0})
\begin{equation}\label{eqn:Alg-1**}
\Omega^{i-1}_{E/k}\otimes_{E} p_*(\sO_{E_{n+1}}) \to p_*(\Omega^{i-1}_{V/k}|_{E_{n+1}})
\xrightarrow{d} p_*(\Omega^{i}_{V/k}|_{E_n}) \to
\Omega^{i-1}_{E/k}(1) \otimes_{E} p_*(\sO_{E_n})
\end{equation}
is surjective with kernel $\Omega^{i-1}_{E/k}$ (via the inclusion
$p^*: \sO_E \inj p_*(\sO_{E_{n+1}})$). In particular,
the composite map
$H^0(E, \Omega^{i-1}_{E/k}(j)) \to H^0(E,  p_*(\Omega^{i-1}_{V/k}|_{E_{n+1}}))
\xrightarrow{d}  H^0(E,  p_*(\Omega^{i}_{V/k}|_{E_{n}})) \to
H^0(E, \Omega^i_{E/k}(j))$ is an isomorphism for every $1 \le j \le n$.

Since this claim is a local assertion, we can use our notations of
~\eqref{eqn:Blow-up-7} and note that $\alpha$ is induced by 
the map $\Omega^i_{V/k} \to \Omega^i_{V/E}$. In the local 
notations, the composite map of ~\eqref{eqn:Alg-1**} is then given by
$\Omega^{i-1}_{A/k} \otimes_{A} {A_{n+1}} \to 
\Omega^{i-1}_{A/k} \otimes_{A} \Omega^1_{{A[x_1]}/A} \otimes_{A[x_1]} A_n 
\simeq \Omega^{i-1}_{A/k} \otimes_{A} A_ndx_1$, that 
sends $w \otimes x^j_1$ to $w \otimes jx^{j-1}_1dx_1$.
Here, we use $A_n = {A[x_1]}/{(x^n_1)}$. This map clearly satisfies the
assertion of the claim.

It follows from the claim that there is a commutative diagram 
\begin{equation}\label{eqn:Alg-1**-0}
\xymatrix@C1pc{
& & \{H^0(E, {p}_*(\Omega^{i-1}_{V/k}|_{E_n}))\}_n \ar[d]^{d} \ar[dr] & & \\
0 \ar[r] & \{\stackrel{n-1}{\underset{j =0}\oplus} H^0(E, \Omega^i_{E/k}(j))\}_n 
\ar[r] & \{H^0(E_n, \Omega^i_{V/k}|_{E_n})\}_n \ar[r]^-{\alpha} &
\{\stackrel{n}{\underset{j =1}\oplus} H^0(E, \Omega^{i-1}_{E/k}(j))\}_n \ar[r] 
& 0}
\end{equation}
of pro-abelian groups such that the diagonal arrow on the right is surjective.
In particular, we conclude from \lemref{lem:Pro-ker-Co} that 
map of pro-abelian groups
\begin{equation}\label{eqn:Alg-1**-1}
\left\{\stackrel{n-1}{\underset{j =0}\oplus} H^0(E, \Omega^i_{E/k}(j))\right\}_n
\to
\left\{\frac{H^0(E, p_*(\Omega^i_{V/k}|_{E_n}))}
{d(H^0(E, p_*(\Omega^{i-1}_{V/k}|_{E_n})))}\right\}_n
\end{equation}
is surjective.

On the other hand, as the sheaf of pro-abelian groups
$\{{\sI^n}/{\sI^{2n}}\}_n$ is zero, the exact sequence
\begin{equation}\label{eqn:Alg-2**}
{\sI^n}/{\sI^{2n}} \xrightarrow{d} \Omega^1_{V/k}|_{E_n} \to
\Omega^1_{{E_n}/k} \to 0
\end{equation}
shows that
$\{p_*(\Omega^1_{V/k}|_{E_n})\}_n \xrightarrow{\simeq} 
\{p_*(\Omega^1_{{E_n}/k})\}_n$.
In particular, \lemref{lem:ext-pro} implies that 
there is a pro-isomorphism of the sheaves of differential graded algebras 
$\{p_*(\Omega^*_{V/k}|_{E_n})\}_n \xrightarrow{\simeq} 
\{p_*(\Omega^*_{{E_n}/k})\}_n$. 
Combining this with ~\eqref{eqn:Alg-1**-1}, we
get for $i \ge 1$, a surjective map of pro-abelian groups
\begin{equation}\label{eqn:Alg-3}
\left\{\stackrel{n-1}{\underset{j =0}\oplus} H^0(E, \Omega^i_{E/k}(j))\right\}_n
\surj 
\left\{\frac{H^0(E_n, \Omega^i_{{E_n}/k})}{d(H^0(E_n, \Omega^{i-1}_{{E_n}/k}))}
\right\}_n.
\end{equation}

Using this surjective map, 
we can prove:

\begin{prop}\label{prop:Alg-Main}
Let $k$ be a field of characteristic zero.
Then the map of pro-abelian groups
$\{\wt{HC}^{k}_i(P_n)\}_n \xrightarrow{\pi^*}
\{H^0(E_n, \wt{\sHC}^{k}_{i,E_n})\}_n$ is surjective for $i \ge 3$.
\end{prop}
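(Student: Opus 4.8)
The plan is to reduce everything to the $\lambda$-decomposition and then to read off the answer from \eqref{eqn:Alg-3} together with the fact that $E$ is a surface. Since $\pi^*$ is induced by a pull-back it commutes with the Adams (or $\lambda$-) operations, so it respects the Hodge decomposition $\wt{\sHC}^{k}_{i,E_n}\simeq\bigoplus_{0\le j\le i}\wt{\sHC}^{k,(j)}_{i,E_n}$ of the reduced cyclic homology sheaf and, after applying $H^0$, the corresponding decomposition of the target pro-group. It therefore suffices to treat each Hodge summand separately. I will in fact prove the stronger statement that for $i\ge 3$ every summand of the target is the zero pro-abelian group, so that $\pi^*$ is surjective for the trivial reason that its target vanishes; this is precisely where the hypothesis $i\ge 3$ enters.

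For the summands of weight $j<i$ I would argue as in \lemref{lem:vanish-cyclic-1}. The stalk of $\wt{\sHC}^{k,(j)}_{i,E_n}$ at a point $x\in E$ is the relative Hodge piece $HC^{k,(j)}_{i}(\sO_{E_n,x},\sO_{E,x})$, and because $E\inj V$ is a closed immersion of smooth $k$-schemes with $E_n$ its $n$-th infinitesimal neighbourhood, \lemref{lem:Pro-trivial} yields $\{HC^{k,(j)}_{i}(\sO_{E_n,x},\sO_{E,x})\}_n=0$ for all $j<i$. Hence the pro-sheaf $\{\wt{\sHC}^{k,(j)}_{i,E_n}\}_n$ is zero for $0\le j\le i-1$, and a fortiori $\{H^0(E_n,\wt{\sHC}^{k,(j)}_{i,E_n})\}_n=0$.

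The top weight $j=i$ is where \eqref{eqn:Alg-3} does the work. By \corref{cor:surjection-H0-ext} the canonical map $H^0(E_n,\wt{\Omega}^i_{E_n/k})\to H^0(E_n,\wt{\sHC}^{k,(i)}_{i,E_n})$ is surjective, and since $d$ lands in its kernel it descends to a surjection out of $H^0(E_n,\wt{\Omega}^i_{E_n/k})/d\,H^0(E_n,\wt{\Omega}^{i-1}_{E_n/k})$. Now comes the decisive observation: $E=\P^1_k\times_k\P^1_k$ is two-dimensional, so $\Omega^i_{E/k}=0$ whenever $i\ge 3$, and therefore $H^0(E,\Omega^i_{E/k}(j))=0$ for every $j$. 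Thus the source of \eqref{eqn:Alg-3} is the zero pro-group, forcing its target $\{H^0(E_n,\Omega^i_{E_n/k})/d\,H^0(E_n,\Omega^{i-1}_{E_n/k})\}_n$ to vanish. The reduced quotient $\{H^0(E_n,\wt{\Omega}^i_{E_n/k})/d\,H^0(E_n,\wt{\Omega}^{i-1}_{E_n/k})\}_n$ is a direct summand of this via the retraction $p\colon E_n\to E$ splitting $E\inj E_n$, hence it too is zero, and consequently $\{H^0(E_n,\wt{\sHC}^{k,(i)}_{i,E_n})\}_n=0$.

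Putting the two ranges together gives $\{H^0(E_n,\wt{\sHC}^{k}_{i,E_n})\}_n=0$ for $i\ge 3$, so $\pi^*$ is (trivially) surjective; stating the conclusion as surjectivity simply keeps it parallel to \thmref{thm:Surj-Main} and matches what the pro-descent of \secref{sec:MT} consumes. I do not expect a serious obstacle inside this argument itself: all the genuine labour sits upstream, in the construction of \eqref{eqn:Alg-3}. The only care needed is the bookkeeping of the Hodge weights, separating the weights $j<i$, which die sheaf-theoretically by \lemref{lem:Pro-trivial}, from the top weight $j=i$, whose global sections die only after passing to the pro-limit via \eqref{eqn:Alg-3}. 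It is also this split that explains the exclusion of $i=2$: there $\Omega^2_{E/k}\neq 0$, the source of \eqref{eqn:Alg-3} no longer vanishes, the top Hodge piece of the target can survive, and the present vanishing argument breaks down exactly at $i=2$.
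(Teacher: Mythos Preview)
Your argument is correct and follows essentially the same route as the paper: reduce to the top Hodge piece via \lemref{lem:Pro-trivial}, then use \corref{cor:surjection-H0-ext} together with \eqref{eqn:Alg-3} and $\Omega^{\ge 3}_{E/k}=0$ to see that this piece vanishes as a pro-group. The paper organises the same ingredients into the commutative diagram~\eqref{eqn:Alg-Main-0}, but the substance is identical; your direct observation that the target is actually zero is exactly what the paper concludes when it notes that the right hand side of the comparison map vanishes.
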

\begin{proof}
Using \lemref{lem:Pro-trivial} and the identification $P_n = \Spec(Q_n)$, 
we can replace the right hand side by
$\{H^0(E_n, \wt{\sHC}^{k,(i)}_{i,E_n})\}_n$
and the left hand side by $\{\wt{HC}^{k,(i)}_i(Q_n)\}_n$. 
Using the isomorphisms $\wt{\Omega}^i_{{Q_n}/k}
\simeq \Omega^i_{{Q_n}/k}$, we get a commutative diagram
\begin{equation}\label{eqn:Alg-Main-0}
\xymatrix@C1pc{
\wt{HC}^{k,(i)}_i(Q_n) \ar[d] &
\frac{\wt{\Omega}^i_{{Q_n}/k}}{d(\wt{\Omega}^{i-1}_{{Q_n}/k})}
\ar[r]^{\simeq} \ar[d] \ar[l]_{\simeq} &
\frac{\Omega^i_{{Q_n}/k}}{d(\Omega^{i-1}_{{Q_n}/k})} \ar[d]  \\
\{H^0(E_n, \wt{\sHC}^{k,(i)}_{i,E_n})\}_n &
\frac{H^0(E_n, \wt{\Omega}^{i}_{{E_n}/k})}
{d(H^0(E_n, \wt{\Omega}^{i-1}_{{E_n}/k}))} \ar[r]^{\simeq} \ar@{->>}[l] &
\frac{H^0(E_n, {\Omega}^{i}_{{E_n}/k})}
{d(H^0(E_n, {\Omega}^{i-1}_{{E_n}/k}))}.}
\end{equation}

Since $\Omega^{\ge 3}_{E/k} = 0$, it follows from the splitting of the map 
$p:E_n  \to E$ via the section $E \inj E_n$ that the horizontal arrow on the 
bottom right is an isomorphism. The horizontal arrow on the 
bottom left is surjective by \corref{cor:surjection-H0-ext}.
Hence, it suffices to show that the map
$\left\{\frac{\Omega^i_{{Q_n}/k}}{d(\Omega^{i-1}_{{Q_n}/k})}\right\}_n \to
\left\{\frac{H^0(E_n, {\Omega}^{i}_{{E_n}/k})}
{d(H^0(E_n, {\Omega}^{i-1}_{{E_n}/k}))}\right\}_n$
is surjective for $i \ge 3$. But the right hand side of this map 
is in fact zero by ~\eqref{eqn:Alg-3} because $\Omega^{\ge 3}_{E/k} = 0$.
This finishes the proof.
\end{proof}

\begin{remk}\label{remk:i=2}
One can check from Lemmas~\ref{lem:Coh-main} and ~\ref{lem:surjection-H0}
that the horizontal arrow on the 
bottom right of ~\eqref{eqn:Alg-Main-0} is an isomorphism and
the one on the left is surjective for $i =2$ as well.
However, we do not know how to conclude using ~\eqref{eqn:Alg-3} that 
$\left\{\frac{\Omega^2_{{Q_n}/k}}{d(\Omega^{1}_{{Q_n}/k})}\right\}_n \to
\left\{\frac{H^0(E_n, {\Omega}^{2}_{{E_n}/k})}
{d(H^0(E_n, {\Omega}^{1}_{{E_n}/k}))}\right\}_n$ is surjective.
\end{remk}

\section{The main theorems}\label{sec:MT}
To prove our main results of this text, we shall use the following
straightforward variant of the Thomason-Trobaugh spectral sequence
for algebraic $K$-theory. We shall continue to use the
notations of \S~\ref{sec:R-reg}.

\begin{thm}$($\cite[Theorem~10.3]{TT}$)$\label{thm:TT-gen}
Let $k$ be a field and let $R$ be a commutative Noetherian $k$-algebra.
Given $X \in \Sch_k$ and a closed subscheme $Z \subset X$, there exists a 
strongly convergent spectral sequence
\begin{equation}\label{eqn:TT-S}
E^{p,q}_2 = H^p(X, {_R\sK_{q,(X, Z)}}) \Rightarrow {_RK_{q-p}(X,Z)}.
\end{equation}
\end{thm}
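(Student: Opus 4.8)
The plan is to obtain this as the Brown--Gersten/hypercohomology descent spectral sequence of Thomason--Trobaugh, applied to a single presheaf of spectra on the small Zariski site of $X$.

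First I would fix the presheaf of spectra $\sF$ on ${X}/{\zar}$ given by $U \mapsto {_RK(U, Z\cap U)}$; by definition $\sF(U)$ is the homotopy fiber of the restriction ${_RK(U)} \to {_RK(Z\cap U)}$, where ${_RK(U)} = K(U\times_k S)$ as in \S~\ref{sec:Not-R}. With this definition, the Zariski sheafification of the homotopy presheaf $U \mapsto \pi_q\sF(U)$ is precisely ${_R\sK_{q,(X,Z)}}$, and the homotopy of the global-sections spectrum is $\pi_{q-p}(\sF(X)) = {_RK_{q-p}(X,Z)}$.

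The key step is to check that $\sF$ satisfies Zariski descent on $X$. The Thomason--Trobaugh Mayer--Vietoris theorem \cite[Theorem~8.1]{TT} shows that the nonconnective $K$-theory presheaf satisfies Zariski descent on every quasi-compact, quasi-separated scheme. Since $R$ is Noetherian and $X \in \Sch_k$ is separated Noetherian, each $U\times_k S$ and each $(Z\cap U)\times_k S$ is quasi-compact and quasi-separated, and the base-change functor $U \mapsto U\times_k S$ sends a Zariski cover $U = U_1 \cup U_2$ to the Zariski cover $U\times_k S = (U_1\times_k S)\cup (U_2\times_k S)$. Hence both presheaves $U \mapsto {_RK(U)}$ and $U \mapsto {_RK(Z\cap U)}$ satisfy Zariski Mayer--Vietoris on $X$; as homotopy fibers preserve homotopy-cartesian squares, so does $\sF$. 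Thus $\sF$ enjoys the Brown--Gersten property on ${X}/{\zar}$.

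Next I would invoke the Brown--Gersten descent spectral sequence for a presheaf of spectra satisfying the Mayer--Vietoris property on the Noetherian space $X$. Filtering $\sF$ by its Postnikov tower and passing to a Godement (flasque) resolution produces a spectral sequence
\begin{equation*}
E^{p,q}_2 = H^p(X, {_R\sK_{q,(X,Z)}}) \Rightarrow \pi_{q-p}(\sF(X)),
\end{equation*}
whose $E_2$-sheaves are identified by the first step and whose abutment is $\pi_{q-p}(\sF(X)) = {_RK_{q-p}(X,Z)}$. Strong convergence follows from Grothendieck's vanishing theorem: $X$ is Noetherian of finite Krull dimension $d$, so $H^p(X,-)$ vanishes on Zariski sheaves for $p > d$; the $E_2$-page is then concentrated in the strip $0 \le p \le d$, so each total degree receives only finitely many nonzero contributions and the conditionally convergent Postnikov spectral sequence converges strongly.

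The step I expect to be the main obstacle is the descent verification: one must ensure that base change along $-\times_k S$ preserves the Mayer--Vietoris property, which is exactly where the nonconnective form of $K$-theory and the quasi-compactness of all the schemes involved are used. Once descent is in place, the remainder is the formal machinery of sheaves of spectra.
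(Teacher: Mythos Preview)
Your proposal is correct and follows essentially the same route as the paper: both reduce to the Thomason--Trobaugh descent spectral sequence for a presheaf of spectra on the Zariski site of $X$, with the only novelty being that the presheaf is $U \mapsto K(U\times_k S, (Z\cap U)\times_k S)$ rather than $U\mapsto K(U)$.

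The difference is one of emphasis. You stress the Mayer--Vietoris step, checking that base change $-\times_k S$ carries Zariski covers to Zariski covers so that the relative presheaf inherits descent from \cite[Theorem~8.1]{TT}. The paper instead points to \cite[Proposition~3.20.2]{TT} (continuity of $K$-theory under filtered colimits of rings) together with the elementary identity $A_{\fp}\otimes_k R \simeq \varinjlim_i (A\otimes_k R)[f_i^{-1}]$, which is what one needs to identify the stalks of ${_R\sK_{q,(X,Z)}}$ with the relative $K$-groups of the local rings $\sO_{X,x}\otimes_k R$; this is the one place in the original TT argument where the base change to $R$ requires a word of justification. Your argument and the paper's are complementary rather than competing: the descent verification and the stalk identification are both ingredients in the TT machinery, and either emphasis suffices once one says ``repeat \cite[Theorem~10.3]{TT} verbatim''.
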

\begin{proof}
This is proved by repeating the argument of \cite[Theorem~10.3]{TT}
verbatim with the aid of \cite[Proposition~3.20.2]{TT} and the fact that
if $U = \Spec(A)$ is an affine open in $X$ with a prime ideal $\fp \subset A$
such that $A_{\fp} = {\underset{i}\varinjlim} \ A[f^{-1}_i]$,
then $A_{\fp} \otimes_k R \simeq {\underset{i}\varinjlim} \ 
(A\otimes_k R)[f^{-1}_i]$. We leave out the details.
\end{proof} 

\subsection{$K$-theory of $R[M]$ and proof of \thmref{thm:Main-1}}
Let $k$ be any field of characteristic zero and let $R$ be 
a Noetherian regular $k$-algebra. Let $R[M]$ be Gubeladze's monoid
algebra of ~\eqref{eqn:GMA-0}. Let $X = \Spec(k[M])$.

It follows from \cite[Theorem~0.1]{Morrow} that there is a long
exact sequence
\begin{equation}\label{eqn:MT-1-0}
\cdots \to {_RK_{m+1}(V, E)} \oplus \{{_RK_{m+1}(P_n, P)}\}_n \to
\{{_RK_{m+1}(E_n, E)}\}_n \to {_RK_m(X, P)} \hspace*{3cm}
\end{equation}
\[ 
\hspace*{5cm} 
\to 
{_RK_{m}(V, E)} \oplus \{{_RK_{m}(P_n,P)}\}_n \to \{{_RK_m(E_n, E)}\}_n 
\to \cdots .
\]

Note that the exact sequence ~\eqref{eqn:MT-1-0}
already follows from the pro-descent theorem \cite[Theorem~1.1]{Krishna-2}
and one does not need to use the more general result of \cite{Morrow}.

Using the homotopy invariance for the map $\wt{p}: V \to E$, this
exact sequence becomes (see \S~\ref{sec:Not-R} for the definition
of ${_R\wt{K}_*(-)}$)
\begin{equation}\label{eqn:MT-1-1}
\cdots \to 
\{{_R\wt{K}_{m+1}(P_n)}\}_n \to
\{{_R\wt{K}_{m+1}(E_n)}\}_n \to {_R\wt{K}_m(X)} 
\to \{{_R\wt{K}_{m}(P_n)}\}_n \to \{{_R\wt{K}_m(E_n)}\}_n 
\to \cdots .
\end{equation}

\begin{lem}\label{lem:MT-2}
There is an isomorphism of pro-abelian groups
\[
{\rm Coker}(\{{_R\wt{K}_{m+1}(P_n)}\}_n \to
\{{_R\wt{K}_{m+1}(E_n)}\}_n) \hspace*{5cm}
\]
\[
\hspace*{5cm}
\simeq
{\rm Coker}(\{{_R\wt{HC}^{(m)}_m(P_n)}\}_n \to \{H^0(E_n,
{_R\wt{\sHC}^{(m)}_{m,E_n}})\}_n).
\]
\end{lem}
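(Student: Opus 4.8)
The plan is to move the whole statement across the relative Chern character and then feed it into the cyclic-homology vanishing theorems already established.

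First I would apply Goodwillie's theorem. Since every ring occurring here is a $\Q$-algebra, and the ideals $\bar{\fm}_R/\bar{\fm}^n_R \subset Q_n$ together with the ideal sheaves cutting out $E \inj E_n$ are nilpotent, the relative Chern character induces natural isomorphisms
\[
{_R\wt{K}_{m+1}(P_n)} \xrightarrow{\simeq} {_R\wt{HC}_m(P_n)}, \qquad
{_R\sK_{q,(E_n,E)}} \xrightarrow{\simeq} {_R\wt{\sHC}_{q-1,E_n}}
\]
of abelian groups, respectively of Zariski sheaves on $E_n$, both compatible with the maps induced by $\pi$ and with the $\lambda$-decomposition.

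Next I would feed the second isomorphism into the Thomason--Trobaugh spectral sequence of \thmref{thm:TT-gen} for the pair $(E_n, E)$, namely $E^{p,q}_2 = H^p(E_n, {_R\sK_{q,(E_n,E)}}) \Rightarrow {_R\wt{K}_{q-p}(E_n)}$. For the target degree $q - p = m+1$ the associated graded is built from subquotients of $H^p(E_n, {_R\sK_{m+1+p,(E_n,E)}}) \simeq H^p(E_n, {_R\wt{\sHC}_{m+p,E_n}})$ for $p \ge 0$. By \lemref{lem:vanish-cyclic-1} all terms with $p \ge 1$ are zero as pro-abelian groups, and on the surviving $H^0$-column every differential is likewise pro-zero, so the pro-system collapses to give $\{{_R\wt{K}_{m+1}(E_n)}\}_n \simeq \{H^0(E_n, {_R\wt{\sHC}_{m,E_n}})\}_n$. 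Together with the affine identification and the naturality of the Chern character in $\pi$, this turns the left-hand cokernel into the cokernel of the pro-map $\{{_R\wt{HC}_m(P_n)}\}_n \xrightarrow{\pi^*} \{H^0(E_n, {_R\wt{\sHC}_{m,E_n}})\}_n$.

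Finally I would cut this down to the top Hodge weight. The $\lambda$-decomposition splits the last pro-map into its weight-$j$ summands, and it suffices to see that the target vanishes for $j < m$. But the stalk of ${_R\wt{\sHC}^{(j)}_{m,E_n}}$ at a point of $E$ is a relative cyclic homology group $HC^{(j)}_m(\sO_{E_n},\sO_E)$ attached to the closed embedding $E \inj V$ of regular schemes, which is pro-zero for $j < m$ by \lemref{lem:Pro-trivial}; hence $\{H^0(E_n, {_R\wt{\sHC}^{(j)}_{m,E_n}})\}_n = 0$ for $j < m$. Each summand of the cokernel with $j < m$ is therefore ${\rm Coker}(\,\cdot \to 0) = 0$, and only the weight-$m$ summand survives, which is precisely the right-hand side of the asserted isomorphism. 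The main obstacle is the first step: one must know that Goodwillie's theorem holds in the required sheafified and pro-form, that the sheaf isomorphism ${_R\sK_{q,(E_n,E)}} \simeq {_R\wt{\sHC}_{q-1,E_n}}$ is compatible with the Thomason--Trobaugh spectral sequence, and that all of this respects the $\lambda$-decomposition, so that the reduction to the top weight is legitimate; granting these structural inputs, the vanishing Lemmas~\ref{lem:vanish-cyclic-1} and ~\ref{lem:Pro-trivial} do the rest.
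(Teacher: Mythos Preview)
Your proposal is correct and follows essentially the same route as the paper: Goodwillie's isomorphism ${_R\wt{\sK}_{i,E_n}} \simeq {_R\wt{\sHC}_{i-1,E_n}}$, the Thomason--Trobaugh spectral sequence collapsed via \lemref{lem:vanish-cyclic-1}, and then \lemref{lem:Pro-trivial} to isolate the top Hodge weight. The only cosmetic difference is that the paper applies \lemref{lem:Pro-trivial} separately to both the $P_n$ and the $E_n$ sides to obtain $\{{_R\wt{K}_{m+1}(P_n)}\}_n \simeq \{{_R\wt{HC}^{(m)}_m(P_n)}\}_n$ and $\{{_R\wt{K}_{m+1}(E_n)}\}_n \simeq \{H^0(E_n, {_R\wt{\sHC}^{(m)}_{m,E_n}})\}_n$ directly, whereas you first take cokernels in full cyclic homology and then observe that the weight-$j<m$ summands of the cokernel vanish because their targets do; both arguments arrive at the same place.
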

\begin{proof}
It follows from the spectral sequence ~\eqref{eqn:TT-S},
the isomorphism ${_R\wt{\sK}_{i,E_n}} \simeq {_R\wt{\sHC}_{i-1,E_n}}$
and \lemref{lem:vanish-cyclic-1} that
the edge map 
$\{{_R\wt{K}_{m+1}(E_n)}\}_n \to \{H^0(E_n, {_R\wt{\sHC}_{m,E_n}})\}_n$
is an isomorphism.
It follows from \lemref{lem:Pro-trivial} that 
\begin{equation}\label{eqn:MT-2-0}
\left\{{_R\wt{K}_{m+1}(E_n)}\right\}_n \xrightarrow{\simeq}
\left\{H^0(E_n, {_R\wt{\sHC}^{(m)}_{m,E_n}})\right\}_n.
\end{equation}

We conclude the proof of the lemma by combining this with the
isomorphism ${_R\wt{K}_{m+1}(P_n)}$ \\
$\simeq {_R\wt{HC}^{(m)}_{m}(P_n)}$
(using \lemref{lem:Pro-trivial} again).
\end{proof}

Combining ~\eqref{eqn:MT-1-1}, \lemref{lem:MT-2} and
~\eqref{eqn:MT-2-0}, we obtain the following general 
result.

\begin{thm}\label{thm:MT-General}
For any integer $m \in \Z$, there is a short exact sequence
\begin{equation}\label{eqn:MT-General-0}
0 \to \frac{\{H^0(E_n, {_R\wt{\sHC}^{(m)}_{m,E_n}})\}_n}
{\{{_R\wt{HC}^{(m)}_m(P_n)}\}_n} \to {_R\wt{K}_{m}(X)} \hspace*{7cm}
\end{equation}
\[
\hspace*{4cm}
\to
\Ker\left(\{{_R\wt{HC}^{(m-1)}_{m-1}(P_n)}\}_n \to 
\{H^0(E_n, {_R\wt{\sHC}^{(m-1)}_{m-1,E_n}})\}_n\right) \to 0.
\]
\end{thm}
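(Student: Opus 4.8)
The plan is to read off the claimed short exact sequence directly from the pro-long exact sequence \eqref{eqn:MT-1-1}, using \lemref{lem:MT-2} and \eqref{eqn:MT-2-0} to translate its $K$-theoretic terms into cyclic homology. Since the category of pro-abelian groups is abelian, I may manipulate \eqref{eqn:MT-1-1} exactly as an ordinary long exact sequence of abelian groups.

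First I would isolate the five-term segment of \eqref{eqn:MT-1-1} around ${_R\wt{K}_m(X)}$, namely
\[
\{{_R\wt{K}_{m+1}(P_n)}\}_n \xrightarrow{f_{m+1}}
\{{_R\wt{K}_{m+1}(E_n)}\}_n \to {_R\wt{K}_m(X)}
\to \{{_R\wt{K}_{m}(P_n)}\}_n \xrightarrow{f_m} \{{_R\wt{K}_m(E_n)}\}_n,
\]
where $f_j$ denotes the pullback along the restriction $E_n \to P_n$ of the blow-up $\pi$. From any exact sequence $A \xrightarrow{f} B \to C \to D \xrightarrow{g} E$ one extracts a short exact sequence $0 \to {\rm Coker}(f) \to C \to \Ker(g) \to 0$: exactness at $B$ makes $B \to C$ factor through an injection ${\rm Coker}(f) \inj C$ with image $\Ker(C \to D)$, while exactness at $D$ makes $C \to D$ surject onto $\Ker(g)$ with kernel exactly this image. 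Applied here, this yields $0 \to {\rm Coker}(f_{m+1}) \to {_R\wt{K}_m(X)} \to \Ker(f_m) \to 0$.

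It then remains to rewrite the two outer terms. The cokernel ${\rm Coker}(f_{m+1})$ is exactly the object computed in \lemref{lem:MT-2}, which is the first term $\{H^0(E_n, {_R\wt{\sHC}^{(m)}_{m,E_n}})\}_n/\{{_R\wt{HC}^{(m)}_m(P_n)}\}_n$ of the desired sequence. For the kernel, I would invoke \eqref{eqn:MT-2-0} together with the companion pro-isomorphism ${_R\wt{K}_{m}(P_n)} \simeq {_R\wt{HC}^{(m-1)}_{m-1}(P_n)}$ (both consequences of \lemref{lem:Pro-trivial}) with $m$ replaced by $m-1$; these identify the source and target of $f_m$ with $\{{_R\wt{HC}^{(m-1)}_{m-1}(P_n)}\}_n$ and $\{H^0(E_n, {_R\wt{\sHC}^{(m-1)}_{m-1,E_n}})\}_n$, so that $\Ker(f_m)$ becomes precisely the third term of the sequence.

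The only delicate point --- and the one place an obstacle could hide --- is the \emph{naturality} of these identifications: I must verify that under the Dennis trace isomorphisms the $K$-theoretic maps $f_{m+1}$ and $f_m$ correspond to the cyclic-homology maps induced by $\pi^*$ appearing in the statement. This follows from the functoriality of the Dennis trace with respect to the fibration sequences defining relative $K$-theory, together with its compatibility with the Thomason--Trobaugh edge maps of \eqref{eqn:TT-S}; granting this compatibility, the two squares relating $f_j$ to $\pi^*$ commute and the identification of both outer terms is automatic, completing the proof.
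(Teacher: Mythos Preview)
Your proposal is correct and follows exactly the approach the paper intends: the paper's own proof is the single sentence ``Combining \eqref{eqn:MT-1-1}, \lemref{lem:MT-2} and \eqref{eqn:MT-2-0}, we obtain the following general result,'' and you have unpacked precisely this combination---extract the short exact sequence $0 \to \mathrm{Coker}(f_{m+1}) \to {_R\wt{K}_m(X)} \to \Ker(f_m) \to 0$ from \eqref{eqn:MT-1-1}, then identify the outer terms via \lemref{lem:MT-2} and \eqref{eqn:MT-2-0} (shifted to $m-1$). Your remark on naturality is the right caveat, and your resolution via functoriality of the trace and the spectral sequence edge map is the standard one the paper leaves implicit.
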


\vskip .4cm

{\sl {Proof of \thmref{thm:Main-1}:}}
By definition of ${_RK_*(X)}$, the theorem is equivalent to the 
assertion that ${_R\wt{K}_1(X)} = 0$. 
The term on the left of the exact sequence ~\eqref{eqn:MT-General-0} is
zero for $m =1$ by \thmref{thm:Surj-Main}.
The term on the right is same as
$\{\Ker({\fm_R}/{\fm^n_R} \to H^0(E_n, {_R{\sI}/{\sI^n}}))\}_n$.
But we have shown in the last part of the proof of \thmref{thm:Surj-Main}
that this map is an isomorphism. We conclude by \thmref{thm:MT-General}.
$\hfill\square$

\begin{cor}\label{cor:MT**}
Let $k$ be a field of characteristic zero and let $R$ 
be a Noetherian regular $k$-algebra.
Then Gubeladze's monoid algebra $R[M]$ is $K_1$-regular.
\end{cor}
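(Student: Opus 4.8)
The plan is to obtain $K_1$-regularity of $R[M]$ as a purely formal consequence of \thmref{thm:Main-1} applied to polynomial extensions of $R$, combined with homotopy invariance of $K$-theory for the regular ring $R$. Recall that $R[M]$ is $K_1$-regular precisely when the canonical map $K_1(R[M]) \to K_1(R[M][X_1, \cdots, X_n])$ is an isomorphism for every $n \ge 1$, so it suffices to produce such isomorphisms.

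First I would observe that the hypotheses of \thmref{thm:Main-1} are stable under polynomial extension: for each $n \ge 1$ the ring $R[X_1, \cdots, X_n]$ is again a commutative Noetherian regular ring containing $\Q$ (regularity is preserved under adjoining polynomial variables, and $\Q \subseteq k \subseteq R$ gives $\Q \subseteq R[X_1, \cdots, X_n]$). Hence \thmref{thm:Main-1} applies verbatim with $R$ replaced by $R[X_1, \cdots, X_n]$, yielding an isomorphism $K_1(R[X_1, \cdots, X_n]) \xrightarrow{\simeq} K_1((R[X_1, \cdots, X_n])[M])$. Using the identification $(R[X_1, \cdots, X_n])[M] \simeq R[M][X_1, \cdots, X_n]$, which is clear from the defining presentation ~\eqref{eqn:GMA-0}, this is exactly the bottom horizontal arrow of the commutative square ~\eqref{eqn:K1-reg}.

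Next I would fill in the remaining arrows of ~\eqref{eqn:K1-reg}. The top horizontal arrow $K_1(R) \xrightarrow{\simeq} K_1(R[M])$ is \thmref{thm:Main-1} itself, and the left vertical arrow $K_1(R) \xrightarrow{\simeq} K_1(R[X_1, \cdots, X_n])$ is homotopy invariance of $K$-theory for the regular ring $R$. Since three of the four maps in ~\eqref{eqn:K1-reg} are isomorphisms and the square commutes by functoriality of $K_1$ with respect to the inclusion of scalars $R \inj R[M]$, the right vertical arrow $K_1(R[M]) \to K_1(R[M][X_1, \cdots, X_n])$ is forced to be an isomorphism as well. As this holds for every $n \ge 1$, the algebra $R[M]$ is $K_1$-regular.

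I do not expect a genuine obstacle here: the entire difficulty has already been absorbed into \thmref{thm:Main-1}, whose proof via \thmref{thm:MT-General} and \thmref{thm:Surj-Main} is the real content. The only point deserving a word of care is the commutativity of ~\eqref{eqn:K1-reg}, i.e.\ that the isomorphisms of \thmref{thm:Main-1} for $R$ and for $R[X_1, \cdots, X_n]$ are both induced by the inclusion of scalars and hence are compatible; this is immediate from the naturality of $R \inj R[M]$ in the variable $R$.
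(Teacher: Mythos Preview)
Your proposal is correct and follows exactly the approach indicated in the paper: the commutative square~\eqref{eqn:K1-reg} together with \thmref{thm:Main-1} applied to both $R$ and $R[X_1,\cdots,X_n]$, plus homotopy invariance for the regular ring $R$, forces the right vertical arrow to be an isomorphism. The paper's own proof simply refers back to this square and the remark following \thmref{thm:Main-1}.
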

\begin{proof}
The proof is already explained after the statement of
\thmref{thm:Main-1} in \S~\ref{sec:Intro}.
\end{proof}

For any augmented $k$-algebra $A$, let $\wt{K}(A) := 
{\rm hofiber}(K(A) \xrightarrow{\epsilon_*} K(k))$ denote the reduced 
$K$-theory spectrum of $A$, where $\epsilon: A \to k$ is the augmentation map.

\begin{thm}\label{thm:Main-alg-K}
Let $k$ be a field which is algebraic over $\Q$ and let
$k[M]$ denote Gubeladze's monoid algebra.
Then the following hold.
\begin{enumerate}
\item
$\wt{K}_i(k[M]) = 0$ for $i \notin \{2,3,4\}$.
\item
$\wt{K}_4(k[M]) \simeq
\left\{\frac{\Omega^{3}_{{Q_n}/k}}{d(\Omega^{2}_{{Q_n}/k})}\right\}_n$.
\item
$\wt{K}_3(k[M]) \simeq 
\left\{\Ker(\frac{\Omega^{2}_{{Q_n}/k}}{d(\Omega^{1}_{{Q_n}/k})}
\to H^0(E_n, \wt{\sHC}^{(2)}_{2,E_n}))\right\}_n$.
\end{enumerate}
\end{thm}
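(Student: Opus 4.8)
The plan is to set $R=k$ in the short exact sequence of \thmref{thm:MT-General} and to read off each $\wt K_m(k[M])$ from the single comparison map
\[
\phi_m\colon \{\wt{HC}^{(m)}_m(P_n)\}_n \longrightarrow \{H^0(E_n,\wt{\sHC}^{(m)}_{m,E_n})\}_n ,
\]
since that theorem identifies the left-hand term of its sequence with ${\rm Coker}(\phi_m)$ and the right-hand term with $\Ker(\phi_{m-1})$, yielding
\[
0 \to {\rm Coker}(\phi_m) \to \wt K_m(k[M]) \to \Ker(\phi_{m-1}) \to 0
\]
for every $m$. The first thing I would record is the reduction provided by the hypothesis: since $k/\Q$ is algebraic in characteristic zero, $\Omega^1_{k/\Q}=0$, so $\Omega^i_{E_n/\Q}=\Omega^i_{E_n/k}$ and the cyclic homology over $\Q$ appearing in \thmref{thm:MT-General} coincides with the over-$k$ theory of \propref{prop:Alg-Main}. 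In particular, via \lemref{lem:Pro-trivial} and \lemref{lem:MT-2} the source of $\phi_m$ is identified with $\{\Omega^m_{Q_n/k}/d(\Omega^{m-1}_{Q_n/k})\}_n$, which is exactly the shape appearing on the right of (2) and (3).

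Next I would establish the structural fact that pins everything to $\{2,3,4\}$: the target $\{H^0(E_n,\wt{\sHC}^{(m)}_{m,E_n})\}_n$ vanishes for all $m\ge 3$. This is precisely what the proof of \propref{prop:Alg-Main} extracts, namely the surjection \eqref{eqn:Alg-3} together with $\Omega^{\ge 3}_{E/k}=0$ (here $\dim E=2$) forces the bottom-right corner of \eqref{eqn:Alg-Main-0} to vanish, whereupon \corref{cor:surjection-H0-ext} kills the target. Consequently, for $m\ge 3$ the map $\phi_m$ lands in $0$, so ${\rm Coker}(\phi_m)=0$ and $\Ker(\phi_m)$ is the entire source. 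Feeding $m=4$ and $m=3$ into the sequence (where the left term now vanishes) gives $\wt K_4(k[M])\simeq \Ker(\phi_3)=\{\Omega^3_{Q_n/k}/d(\Omega^2_{Q_n/k})\}_n$ and $\wt K_3(k[M])\simeq \Ker(\phi_2)$, the latter computed termwise by \lemref{lem:Pro-ker-Co}. These are exactly (2) and (3).

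For the vanishing (1) I would treat the low and high ranges separately. In low degrees, \thmref{thm:Surj-Main} shows $\phi_1$ is an isomorphism, and the last part of its proof shows the analogous map $\{\fm/\fm^n\}_n\to\{H^0(E_n,\sI/\sI^n)\}_n$ (i.e.\ $\phi_0$) is an isomorphism as well; with $\wt{HC}^{(<0)}=0$ this gives $\wt K_0(k[M])=\wt K_1(k[M])=0$, while $\wt K_m(k[M])=0$ for $m<0$ is Gubeladze's vanishing of negative $K$-theory. In high degrees I would compute the source directly: $\Omega^1_{Q_n/k}$ is generated by $dx_1,\dots,dx_4$, so $\Omega^{\ge 5}_{Q_n/k}=0$ and hence $\Ker(\phi_{m-1})=0$ for $m\ge 6$; and for $m=5$ a one-line computation shows $\Omega^4_{Q_n/k}\simeq k\cdot\omega$ with $\omega=dx_1\wedge\cdots\wedge dx_4$ (the relation $df$ forces $(x_1,\dots,x_4)\,\omega=0$), while $d(x_1\,dx_2\wedge dx_3\wedge dx_4)=\omega$ exhibits $\omega$ as exact, so $\Omega^4_{Q_n/k}/d(\Omega^3_{Q_n/k})=0$ and $\wt K_5(k[M])=0$. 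Together with the middle range this gives $\wt K_i(k[M])=0$ for all $i\notin\{2,3,4\}$.

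The main obstacle is not any of these assemblies but the hypothesis that makes them cohere: the confinement of the nonvanishing target to degrees $m\le 2$ rests on the identity $\Omega^{\ge 3}_{E_n/\Q}=\Omega^{\ge 3}_{E_n/k}=0$, which fails once $\Omega^1_{k/\Q}\ne 0$; this is the one place the hypothesis $k/\Q$ algebraic is genuinely used, and without it the descent sequence of \thmref{thm:MT-General} (taken over $\Q$) produces nonzero groups in arbitrarily high degree. The remaining delicate point, already isolated in \remref{remk:i=2}, is that $\phi_2$ is the single map whose cokernel is not controlled, which is exactly why $\wt K_2(k[M])\simeq{\rm Coker}(\phi_2)$ is omitted from the statement; the care needed here is only to check that the short exact sequence cleanly separates $\Ker(\phi_2)$ from ${\rm Coker}(\phi_2)$, so that the uncomputed cokernel contributes solely to $\wt K_2$ and never contaminates $\wt K_3\simeq\Ker(\phi_2)$.
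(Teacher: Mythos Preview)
Your proposal is correct and follows essentially the same route as the paper: specialize \thmref{thm:MT-General} to $R=k$, use $k/\Q$ algebraic to identify the over-$\Q$ and over-$k$ cyclic homology, invoke \propref{prop:Alg-Main} together with $\Omega^{\ge 3}_{E/k}=0$ to kill the target of $\phi_m$ for $m\ge 3$, and handle the high-degree source via $\Omega^{\ge 5}_{Q_n/k}=0$ and the surjectivity of $d\colon\Omega^3_{Q_n/k}\to\Omega^4_{Q_n/k}$. One small imprecision: \thmref{thm:Surj-Main} as stated only gives that $\phi_1$ is \emph{surjective}, not an isomorphism (its proof shows the isomorphism at the level of $\wt{\Omega}^1$, which is one step removed from $\wt{HC}^{(1)}_1$); but this is harmless, since for $\wt K_1=0$ you only need ${\rm Coker}(\phi_1)=0$ and $\Ker(\phi_0)=0$, both of which you correctly cite.
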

\begin{proof}

The parts (1) and (3) follow by combining Theorems~\ref{thm:Main-1},
~\ref{thm:MT-General} and \propref{prop:Alg-Main} with the following.
\begin{enumerate}
\item
The map $d: \Omega^3_{{Q_n}/k} \to \Omega^4_{{Q_n}/k}$ is surjective
so that $\wt{HC}^{(4)}_{4}(Q_n) = 0$.
\item
$\Omega^{\ge 5}_{{Q_n}/k} = 0$. 
\end{enumerate}

To prove (2), we only need to show that $\{H^0(E_n, \wt{\sHC}^{k,(3)}_{3,E_n})\}_n
= 0$. We shall in fact show this even if $k$ is not algebraic over $\Q$.

By \corref{cor:surjection-H0-ext}, it suffices to show that
$\left\{\frac{H^0(E_n, \wt{\Omega}^3_{{E_n}/k})}
{d(H^0(E_n, \wt{\Omega}^{2}_{{E_n}/k}))}\right\}_n = 0$.
Since the map $\left\{\frac{H^0(E_n, \wt{\Omega}^3_{{E_n}/k})}
{d(H^0(E_n, \wt{\Omega}^{2}_{{E_n}/k}))}\right\}_n
\to 
\left\{\frac{H^0(E_n, \Omega^3_{{E_n}/k})}
{d(H^0(E_n, \Omega^{2}_{{E_n}/k}))}\right\}_n$ is a split inclusion,
it suffices to show that the latter term is zero.
But this follows from ~\eqref{eqn:Alg-3} since $\Omega^3_{E/k} = 0$.
\end{proof}


\subsection{Proof of \thmref{thm:Main-4}}
In order to show that $\wt{K}(k[M])$ is not contractible, we need to
prove the following intermediate step.
Let $B = k[x_1, \cdots ,x_4], Q = {B}/{(x_1x_2-x_3x_4)}, \
Q_n = {Q}/{\bar{\fm}^n}$ so that $P_n = \Spec(Q_n)$ 
(see ~\eqref{eqn:Blow-up-7}).

\begin{lem}\label{lem:No-reg-step}
Let $k$ be any field of characteristic zero.
The map $\Omega^4_{B/k} \surj \Omega^4_{Q/k}$ induces an
isomorphism $k \omega \xrightarrow{\simeq} \Omega^4_{Q/k}$,
where $\omega = dx_1dx_2dx_3dx_4$.
\end{lem}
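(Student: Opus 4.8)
The plan is to realise $Q$ as the hypersurface $B/(f)$ with $f = x_1x_2 - x_3 x_4$ and to compute the top exterior power $\Omega^4_{Q/k} = \wedge^4_Q\,\Omega^1_{Q/k}$ directly from the conormal presentation of $\Omega^1_{Q/k}$. First I would write down the second fundamental (conormal) exact sequence for the closed immersion $\Spec(Q) \inj \A^4_k = \Spec(B)$,
\[
(f)/(f^2) \xrightarrow{\ d\ } \Omega^1_{B/k}\otimes_B Q \to \Omega^1_{Q/k} \to 0 .
\]
Here $\Omega^1_{B/k}\otimes_B Q \simeq Q^4 = \bigoplus_{i=1}^4 Q\,dx_i$ is free, and since $f$ is a nonzerodivisor in the domain $B$ the module $(f)/(f^2)$ is free of rank one on $\bar f$, with $d(\bar f) = x_2\,dx_1 + x_1\,dx_2 - x_4\,dx_3 - x_3\,dx_4$. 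Thus $\Omega^1_{Q/k}$ is presented as the cokernel of the map $Q \xrightarrow{df} Q^4$, $1 \mapsto df$. Note that I do not even need the injectivity of $d$ here, so the characteristic-zero hypothesis plays no essential role in this particular lemma.

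Next I would apply the top exterior power. By the right-exactness of $\wedge^4(-)$ applied to a free presentation, the sequence
\[
Q \otimes_Q \wedge^3 Q^4 \xrightarrow{\ df \wedge -\ } \wedge^4 Q^4 \to \wedge^4 \Omega^1_{Q/k} \to 0
\]
is exact. Since $\wedge^4 Q^4 = Q\,\omega$ with $\omega = dx_1 dx_2 dx_3 dx_4$, this reduces the whole computation to identifying the image of the map $df \wedge -$, i.e. to wedging $df$ against the four standard basis vectors $dx_j dx_k dx_l$ of $\wedge^3 Q^4$.

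Finally I would carry out this elementary computation: wedging $df$ with $dx_1 dx_2 dx_3$, $dx_1 dx_2 dx_4$, $dx_1 dx_3 dx_4$, $dx_2 dx_3 dx_4$ annihilates all but one term of $df$ and produces, up to sign, $x_3\,\omega$, $x_4\,\omega$, $x_1\,\omega$, $x_2\,\omega$ respectively. Hence the image of $df \wedge -$ is exactly $(x_1, \dots, x_4)\,\omega \subset Q\,\omega$, and therefore
\[
\Omega^4_{Q/k} \simeq \big(Q/(x_1, \dots, x_4)\big)\,\omega \simeq k\,\omega,
\]
because $Q/(x_1, \dots, x_4) = B/(f, x_1, \dots, x_4) = k$. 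Tracing through, the single generator of $\Omega^4_{Q/k}$ is the image of $\omega$, so the surjection $\Omega^4_{B/k} \surj \Omega^4_{Q/k}$ restricts to an isomorphism $k\omega \xrightarrow{\simeq} \Omega^4_{Q/k}$, as asserted. There is no serious obstacle; the only points needing care are the correct sign bookkeeping in the exterior-algebra computation and the verification that the resulting relations span precisely the augmentation ideal times $\omega$.
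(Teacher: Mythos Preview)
Your proof is correct and rests on the same ingredient as the paper---the conormal sequence for $\Spec(Q)\hookrightarrow\A^4_k$---but organizes the argument more economically. The paper splits the lemma into two halves: first it shows $x_i\omega=0$ in $\Omega^4_{Q/k}$ by applying $d$ to the vanishing relations $(x_1x_2-x_3x_4)\,dx_jdx_kdx_l=0$ in $\Omega^3_{Q/k}$, and then separately argues that $\Omega^4_{Q/k}\neq 0$ by invoking its filtration lemma (\lemref{lem:Elm-alg-0}) on the conormal sequence to exhibit a nonzero quotient $\frac{\Omega^4_{B/k}\otimes_B Q}{F^1}$. Your single computation of the image of $df\wedge(-)$ in $\wedge^4 Q^4$ accomplishes both at once, since it identifies $\Omega^4_{Q/k}$ exactly as $Q\omega/(x_1,\dots,x_4)\omega$. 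The two arguments are the same calculation in different clothing---your wedge relations $df\wedge dx_jdx_kdx_l=\pm x_i\omega$ are precisely the paper's relations $d\big((x_1x_2-x_3x_4)\,dx_jdx_kdx_l\big)=0$, rewritten upstairs in $Q\omega$ rather than downstairs in $\Omega^4_{Q/k}$---but yours is shorter and also makes visible that the characteristic-zero hypothesis is not actually used in this lemma.
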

\begin{proof}
We have $\Omega^{4}_{{B}/k} \simeq B \omega$.
We first show that $x_i \omega = 0$ for all $1 \le i \le 4$ in
$\Omega^4_{{Q}/k}$.

Observe that $(x_1x_2-x_3x_4)dx_1dx_2dx_3=0$
in $\Omega^3_{Q/k}$, hence $d(x_1x_2-x_3x_4)dx_1dx_2dx_3=0$
in $\Omega^4_{Q/k}$. But this implies that
$x_3 dx_1dx_2dx_3dx_4=0$ in $\Omega^4_{Q/k}$.
Similarly, considering zero elements
\[
(x_1x_2-x_3x_4)dx_1dx_2dx_4, \ (x_1x_2-x_3x_4)dx_2dx_3dx_4, \
(x_1x_2-x_3x_4)dx_1dx_3dx_4
\]
in $\Omega^3_{Q/k}$, we get
$x_4 \omega =0 = x_2 \omega = x_1 \omega$ as well in $\Omega^4_{Q/k}$.
It follows that the canonical map $\Omega^4_{B/k} \surj \Omega^4_{Q/k}$
factors through $k\omega \xrightarrow{\simeq} \Omega^4_{B/k} \otimes_B
{B}/{\fm} \surj \Omega^4_{Q/k}$. To prove the claim, it is now enough to
show that $\Omega^4_{Q/k} \neq 0$.

Letting $I = (f = x_1x_2-x_3x_4) \subset B$, we have an exact sequence
of $Q$-modules
\[
{I}/{I^2} \xrightarrow{d} \Omega^1_{B/k} \otimes_B Q \to \Omega^1_{Q/k} \to 0.
\]

It follows from \lemref{lem:Elm-alg-0} that there is a surjection
$\Omega^4_{Q/k} \surj \frac{\Omega^4_{B/k} \otimes_B Q}{F^1}$,
where $F^1$ is the submodule of $\Omega^4_{B/k} \otimes_B Q$ generated by
the exterior products of the form
$\{a_1 \wedge \cdots \wedge a_4| \ a_i \in d(I) \ \mbox{for  \ some} \
1 \le i \le 4\}$. Up to a sign, we can in fact assume that $a_1 \in d(I)$.
It is now immediate that any element
of the form $d(af) \wedge a_2\wedge a_3\wedge a_4$ 
(with $a \in B$) must belong to $\fm \omega$. In particular,
$\omega \neq 0$ in $\frac{\Omega^4_{B/k} \otimes_B Q}{F^1}$
and this shows that $\Omega^4_{Q/k} \neq 0$.
\end{proof}

\begin{thm}\label{thm:No-regular}
Let $k$ be any field of characteristic zero and let $k[M]$ denote 
Gubeladze's monoid algebra. Then the map
$K_4(k) \to K_4(k[M])$ is not an isomorphism. In particular,
$\wt{K}(k[M])$ is not contractible.
\end{thm}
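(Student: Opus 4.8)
The plan is to prove that $\wt{K}_4(k[M]) \neq 0$. Since the augmentation $k[M] \to k$ splits the structure map, $K(k) \to K(k[M])$ is split injective, so this is exactly the statement that $K_4(k) \to K_4(k[M])$ fails to be an isomorphism, and it forces $\wt{K}(k[M])$ to be non-contractible (its fourth homotopy group is nonzero). In the notation of \S\ref{sec:MT} (with $R = k$, so the subscripts are dropped), this amounts to showing $\wt{K}_4(X) \neq 0$, and I would read it off from the short exact sequence \eqref{eqn:MT-General-0} of \thmref{thm:MT-General} with $m = 4$.

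First I would dispose of the two outer terms. For the quotient on the left it suffices to kill its numerator $\{H^0(E_n, \wt{\sHC}^{(4)}_{4,E_n})\}_n$. Since $E$ is a surface and $E_n$ is an infinitesimal thickening of $E$ inside the line bundle $V$, in the local coordinates of \eqref{eqn:Blow-up-7} the sheaf $\Omega^1_{{E_n}/k}$ is generated by $dy_3, dy_4, dx_1$; hence $\Omega^4_{{E_n}/k} = 0$, so $\wt{\sHC}^{(4)}_{4,E_n} = 0$ and the left term of \eqref{eqn:MT-General-0} vanishes. For the right term, I would invoke the vanishing $\{H^0(E_n, \wt{\sHC}^{(3)}_{3,E_n})\}_n = 0$, which is established for an arbitrary field $k$ of characteristic zero in the proof of \thmref{thm:Main-alg-K}(2). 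The target of the map whose kernel appears on the right is then $0$, so that kernel equals $\{\wt{HC}^{(3)}_3(P_n)\}_n$. Using $P_n = \Spec(Q_n)$ and the identification $\wt{HC}^{(3)}_3(Q_n) \simeq \Omega^3_{{Q_n}/k}/d(\Omega^2_{{Q_n}/k})$ of the top Hodge piece (as in the proof of \propref{prop:Alg-Main}), \thmref{thm:MT-General} yields an isomorphism of pro-abelian groups
\[
\wt{K}_4(X) \;\simeq\; \left\{\frac{\Omega^3_{{Q_n}/k}}{d(\Omega^2_{{Q_n}/k})}\right\}_n .
\]

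It remains to prove that this pro-group is nonzero. The differential $d\colon \Omega^3_{{Q_n}/k} \to \Omega^4_{{Q_n}/k}$ is surjective (recorded in the proof of \thmref{thm:Main-alg-K}) and annihilates $d(\Omega^2_{{Q_n}/k})$, so it descends to a surjection of pro-abelian groups
\[
\left\{\frac{\Omega^3_{{Q_n}/k}}{d(\Omega^2_{{Q_n}/k})}\right\}_n \;\surj\; \{\Omega^4_{{Q_n}/k}\}_n ,
\]
and it suffices to show $\{\Omega^4_{{Q_n}/k}\}_n \neq 0$. This is where \lemref{lem:No-reg-step} enters: I would run its argument verbatim with $Q_n = B/(f,\bar{\fm}^n)$, $f = x_1x_2 - x_3x_4$, in place of $Q$. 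Applying \lemref{lem:Elm-alg-0} to $I_n/I_n^2 \to \Omega^1_{B/k}\otimes_B Q_n \to \Omega^1_{{Q_n}/k} \to 0$ gives a surjection $\Omega^4_{{Q_n}/k} \surj (Q_n\omega)/F^1_n$ with $\omega = dx_1dx_2dx_3dx_4$, and the only extra relations coming from $d(\bar{\fm}^n)$ are of the form $x^{\beta}\omega = 0$ with $|\beta| = n-1 \geq 1$, already implied by $\bar{\fm}\,\omega = 0$. Hence $F^1_n \subseteq \bar{\fm}\,\omega$, and exactly as in \lemref{lem:No-reg-step} one concludes $\Omega^4_{{Q_n}/k} \simeq k\omega$ with $\omega \neq 0$; since the transition maps fix $\omega$, the pro-group $\{\Omega^4_{{Q_n}/k}\}_n$ is the nonzero constant pro-group $k$. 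Therefore the pro-group computing $\wt{K}_4(X)$ surjects onto a nonzero pro-group and is itself nonzero, so $\wt{K}_4(X) \neq 0$, completing the proof.

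The main obstacle is this final step: passing from the clean formula for $\wt{K}_4$ to genuine nonvanishing of the relevant pro-abelian group. All the sheaf-cohomology vanishing feeding \thmref{thm:MT-General} pushes the computation down to the top Hodge piece of the cyclic homology of the thickenings $Q_n$, but a priori a nonzero module of differential forms can still be pro-trivial. The content surviving the pro-limit is precisely the top form $\omega$, and controlling it uniformly in $n$ — checking that the additional infinitesimal relations defining $Q_n$ do not annihilate $\omega$ — is the crux, supplied by \lemref{lem:No-reg-step} and its evident extension from $Q$ to each $Q_n$.
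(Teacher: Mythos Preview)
Your overall strategy is right, but there is a systematic confusion between absolute (over $\Q$) and $k$-relative cyclic homology and K\"ahler differentials, and for a general characteristic-zero field $k$ this is a genuine gap. In the paper's conventions (stated just before \S\ref{sec:Not-R}) all Hochschild/cyclic homology sheaves and differential forms without a superscript are taken over $\Q$; with $R = k$ the sheaf $\wt{\sHC}^{(j)}_{j,E_n}$ and the group $\wt{HC}^{(j)}_j(Q_n)$ appearing in \thmref{thm:MT-General} are built from $\Omega^*_{-/\Q}$, not $\Omega^*_{-/k}$. So the fact that $\Omega^4_{{E_n}/k} = 0$ does \emph{not} kill $\wt{\sHC}^{(4)}_{4,E_n}$ when $k$ has positive transcendence degree over $\Q$; likewise, what the proof of \thmref{thm:Main-alg-K}(2) establishes is $\{H^0(E_n, \wt{\sHC}^{k,(3)}_{3,E_n})\}_n = 0$ (note the superscript $k$), not the $\Q$-absolute vanishing you invoke; and the identification $\wt{HC}^{(3)}_3(Q_n) \simeq \Omega^3_{{Q_n}/k}/d\Omega^2_{{Q_n}/k}$ is only correct for $k = \Q$. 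Your displayed isomorphism for $\wt{K}_4(X)$ is therefore unjustified for general $k$.

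The paper repairs this with one extra step and actually does less than you attempt: it never touches the left term of \eqref{eqn:MT-General-0} at all (the surjection onto the right-hand kernel already suffices for nonvanishing), and it uses the decomposition \eqref{eqn:vanish-cyclic-2-3} to split off, from the $\Q$-absolute kernel on the right, the $k$-relative summand
\[
\Ker\!\left(\frac{\Omega^3_{{Q_n}/k}}{d(\Omega^2_{{Q_n}/k})} \longrightarrow H^0(E_n, \wt{\sHC}^{k,(3)}_{3,E_n})\right),
\]
whose target now genuinely vanishes by \thmref{thm:Main-alg-K}(2). From there your endgame is fine: the paper also surjects onto $\{\Omega^4_{{Q_n}/k}\}_n$ and shows it is the constant nonzero pro-group $k\omega$. (A minor variant: rather than rerunning \lemref{lem:No-reg-step} for each $Q_n$, the paper uses \lemref{lem:ext-pro} to identify $\{\Omega^4_{{Q_n}/k}\}_n$ with $\{\Omega^4_{Q/k}\otimes_Q Q_n\}_n$ and then applies \lemref{lem:No-reg-step} once; either route works.)
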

\begin{proof}
Using ~\thmref{thm:MT-General}, it suffices to show that
$\{\Ker(\frac{\wt{\Omega}^{3}_{{Q_n}}}{d(\wt{\Omega}^{2}_{{Q_n}})}
\to H^0(E_n, \wt{\sHC}^{(3)}_{3,E_n}))\}_n \neq 0$.
By the decomposition ~\eqref{eqn:vanish-cyclic-2-3},
there is a split surjection
\[
\Ker\left(\frac{\wt{\Omega}^{3}_{{Q_n}}}{d(\wt{\Omega}^{2}_{{Q_n}})}
\to H^0(E_n, \wt{\sHC}^{(3)}_{3,E_n})\right) \surj
\Ker\left(\frac{\wt{\Omega}^{3}_{{Q_n}/k}}{d(\wt{\Omega}^{2}_{{Q_n}/k})}
\to H^0(E_n, \wt{\sHC}^{k,(3)}_{3,E_n})\right)
\]
for every $n \ge 1$.
It suffices therefore to show that 
$\{\Ker(\frac{\wt{\Omega}^{3}_{{Q_n}/k}}{d(\wt{\Omega}^{2}_{{Q_n}/k})}
\to H^0(E_n, \wt{\sHC}^{k,(3)}_{3,E_n}))\}_n \neq 0$.
We have shown in the proof of \thmref{thm:Main-alg-K} that
$\{H^0(E_n, \wt{\sHC}^{k,(3)}_{3,E_n}))\}_n = 0$.
Since $\wt{\Omega}^{i}_{{Q_n}/k} = {\Omega}^{i}_{{Q_n}/k}$ for every $i \ge 0$,
we are reduced to showing that 
$\{\frac{{\Omega}^{3}_{{Q_n}/k}}{d({\Omega}^{2}_{{Q_n}/k})}\}_n \neq 0$.

Since $\frac{{\Omega}^{3}_{{Q_n}/k}}{d({\Omega}^{2}_{{Q_n}/k})} \surj 
\Omega^4_{{Q_n}/k}$
for every $n \ge 1$, as shown in the proof of \thmref{thm:Main-alg-K}, 
it suffices to show that $\{\Omega^4_{{Q_n}/k}\}_n \neq 0$.

Using an analogue of ~\eqref{eqn:Alg-2**} for the surjection
$Q \surj Q_n$ and \lemref{lem:ext-pro}, we see that the map of pro-$Q$-modules 
$\{\Omega^4_{Q/k} \otimes_Q Q_n\}_n \xrightarrow{\simeq}
\{\Omega^4_{{Q_n}/k}\}_n$ is an isomorphism.
We thus need to show that
$\{\Omega^4_{Q/k} \otimes_Q Q_n\}_n \neq 0$.

It follows from \lemref{lem:No-reg-step} that for every $n \ge 1$, there is a
commutative diagram
\begin{equation}\label{eqn:No-regular-0}
\xymatrix@C1pc{
(\Omega^4_{B/k} \otimes_B Q) \otimes_Q Q_n \ar@{->>}[r] \ar@{->>}[d] & 
\Omega^4_{Q/k} \otimes_Q Q_n \ar[d] \ar[dr] & \\
(\Omega^4_{B/k} \otimes_B {B}/{\fm}) \otimes_Q Q_n \ar[ur]^-{\simeq} 
\ar[r]_-{\simeq} & \Omega^4_{B/k} \otimes_B {B}/{\fm} \ar[r]_-{\simeq} &
k \omega }
\end{equation}
such that the diagonal arrow going up in the middle is an isomorphism.
The bottom left horizontal arrow is an isomorphism
because $(f) + \fm^n \subset \fm$. We conclude that 
$\{\Omega^4_{Q/k} \otimes_Q Q_n\}_n \simeq k \omega$. This finishes the proof
of the theorem.
\end{proof}

\subsection{Lindel's question}
In order to partially answer Lindel's question on Serre dimension,
we shall repeatedly use the following elementary and folklore result.
Recall that a projective module $P$ over a Noetherian ring $A$ is
said to have a {\sl unimodular element}, if it admits a free direct summand of 
positive rank.

\begin{lem}\label{lem:red-elm}
Let $A$ be a Noetherian ring and let $P$ be a finitely generated projective 
$A$-module. Let $I \subset A$ be a nilpotent ideal such that
${P}/{IP}$ has a unimodular element. Then $P$ has a unimodular element.
\end{lem}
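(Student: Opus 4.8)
The plan is to reformulate the statement dually and then lift a splitting along the nilpotent ideal. Recall that $P$ has a unimodular element precisely when there exist $p \in P$ and $\phi \in P^* := \Hom_A(P, A)$ with $\phi(p) = 1$; in that case $\phi$ splits the inclusion $A \inj P$, $a \mapsto ap$, and $P \simeq A \oplus \Ker(\phi)$, so that $P$ has a free direct summand of rank one. Writing $\bar{(-)}$ for reduction modulo $I$ and $\bar P = P/IP$, the hypothesis furnishes $\bar p \in \bar P$ and $\bar\phi \in \bar P^*$ with $\bar\phi(\bar p) = 1$.

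First I would lift these data to $P$. Lifting $\bar p$ to some $p \in P$ is immediate from the surjection $P \surj \bar P$. To lift $\bar\phi$, I would use that $P$ is finitely generated projective, so the canonical map $P^* \otimes_A A/I \to \Hom_{A/I}(\bar P, A/I) = \bar P^*$ is an isomorphism; in particular $P^* \surj \bar P^*$, and there is $\phi \in P^*$ reducing to $\bar\phi$. Examining the value $\phi(p)$, by construction $\phi(p) \equiv 1 \pmod I$, say $\phi(p) = 1 - x$ with $x \in I$. Since $I$ is nilpotent, $x$ is nilpotent, hence $1 - x$ is a unit in $A$ with inverse $u = \sum_{j \ge 0} x^j$ (a finite sum). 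Replacing $\phi$ by $u\phi$, I obtain $\psi := u\phi \in P^*$ with $\psi(p) = u(1-x) = 1$. Thus $p$ is a unimodular element of $P$, split by $\psi$, and $P \simeq A \oplus \Ker(\psi)$ exhibits the desired free direct summand.

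The only substantive point — and the step I would be most careful about — is the identification $P^* \otimes_A A/I \xrightarrow{\simeq} \bar P^*$, which is what makes the dual functional liftable; this holds because $\Hom_A(-, A)$ commutes with the base change $A \to A/I$ for finitely generated projective $P$, as one sees by reducing to the free case $P = A^n$. Everything else is formal: once a lift $\phi$ is available, the nilpotence of $I$ upgrades the approximate identity $\phi(p) \equiv 1 \pmod I$ to an exact one after rescaling by the unit $u$.
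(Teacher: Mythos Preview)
Your proof is correct and follows essentially the same idea as the paper's: lift the functional $\bar\phi$ to $\phi\in P^*$ and then use nilpotence of $I$ to upgrade $\phi(p)\equiv 1\pmod I$ to an honest unit. The paper phrases the lift slightly more directly---it lifts the composite $P\twoheadrightarrow \bar P\xrightarrow{\bar\phi} A/I$ along $A\twoheadrightarrow A/I$ using projectivity of $P$, and then applies Nakayama to the cokernel rather than tracking a specific element---but the content is the same.
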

\begin{proof}
Our assumption is equivalent to saying that there is a surjective
$A$-linear map $\ov{\alpha}: P \surj {A}/{I}$. But such a map gives rise to a
commutative diagram
\begin{equation}\label{eqn:lift}
\xymatrix@C1pc{
P \ar[r]^{\alpha} \ar@{->>}[dr]_{\ov{\alpha}} & A \ar@{->>}[d] \\
& {A}/{I}.}
\end{equation}

Letting $M = {\rm Coker}(\alpha)$, our assumption says that
${M}/{IM} = 0$. The Nakayama lemma now implies that
there is an element $s \in I$ such that $(1+s)M = 0$.
But this implies that $M$ must be zero as $I$ is nilpotent and
hence $(1+s) \in A^{\times}$.
\end{proof}

{\sl Proof of \thmref{thm:Main-2}:}
The proof broadly follows the steps of \cite{KP}, where similar
results are proven for a more restrictive class of monoids. In view of Lemma~\ref{lem:red-elm},
we can assume that $R$ is reduced.
We first assume that $R$ has finite normalization.
Let $S$ be the normalization of $R$ and  $C$  the conductor ideal of the 
extension $R\subset S$. 
It is easy to check that height of $C\geq 1$.
Let $R'$ and $S'$ denote the quotient rings $R/C$ and $S/C$ respectively. 
Consider the two Milnor squares:
\[
\xymatrix@C1pc{
R \ar@{->}[r]          
     \ar@{->}[d]
&S 
     \ar@{->}[d]       && R[M]\ar@{->}[r]\ar@{->}[d] & S[M] \ar@{->}[d]
\\
R' \ar@{->}[r]
     &S'                && R'[M]\ar@{->}[r] & S'[M]. 
}
\]

Since $M$ is a normal monoid, $S[M]$ is the normalization of $R[M]$
with conductor ideal $C[M]$. 
Note that  $R'$ and $S'$ are zero dimensional rings.
If a projective module $P$ is free after going modulo a nilpotent ideal, 
then $P$ is free, by Nakayama lemma.
Since reduced zero dimensional rings are product of fields, 
projective modules over $R'[M],\,S'[M]$ are free by
Gubeladze \cite[Theorem~2.1]{Gub-4} (see also \cite[Theorem~1.4]{Gub-2}). 

Let $P$ be a projective $R[M]$-module of rank $r \ge 2$. 
Define $Q:=\wedge^rP\oplus R[M]^{r-1}$. Note that 
$P\otimes_{R[M]} S[M]\simeq Q\otimes_{R[M]} S[M]$ by \cite[Theorem~1.4]{Gub-2} 
and $P\otimes_{R[M]} R'[M]=R'[M]^r \simeq Q\otimes_{R[M]} R'[M]$. Let 
$\sigma_1: P\otimes_{R[M]} S[M]\rightarrow Q \otimes_{R[M]} S[M]$ and 
$\sigma_2:R'[M]^r\to R'[M]^r$
be two isomorphisms. 
Let $\phi_1:P\otimes_{R[M]} S'[M]\to S'[M]^r$ and 
$\phi_2:Q\otimes_{R[M]} S'[M]\to S'[M]^r$ be
two isomorphisms, where $\phi_1$ and $\phi_2$ are induced from $\sigma_1$ and 
$\sigma_2$. Then $P\otimes_{R[M]} S'[M]$ and $Q\otimes_{R[M]} S'[M]$ 
 are connected by a matrix $\phi:=\phi_2^{-1}\phi_1\in \GL_r(S'[M])$.
If we can prove that $\phi$ can be transformed to identity by changing the 
isomorphism at
the corner $S[M]$ or $R'[M]$, then by Milnor patching, we will have
$P\simeq Q$  and this will complete the proof.
We do it as follows.

By \cite[Lemma 4.1]{K}, we have ${\rm det}(\phi)\in U(R'[M])U(S[M])$. 
Let ${\rm det}(\phi)=u_1'u_2'$. 
Define 

\begin{center}
$u_1=\left[
\begin{array}{cccc}
u_1' & 0&\cdots & 0\\
0 & 1 & \cdots & 0\\
\vdots& \vdots&\cdots &\vdots\\
0 & 0 &\cdots & 1
\end{array}\right]$
and $u_2=\left[
\begin{array}{cccc}
u_2'^{-1} & 0&\cdots & 0\\
0 & 1 & \cdots & 0\\
\vdots& \vdots&\cdots &\vdots\\
0 & 0 &\cdots & 1
\end{array}\right].$
 \end{center}

Now changing $\sigma_1$
by $u_1^{-1}\sigma_1$ and $\sigma_2$ by $u_2^{-1}\sigma_2$, we can assume that 
$\phi\in \SL_r(S'[M])$. Since $SK_1(S'[M]) \simeq SK_1(S'[M]_{\rm red})$ 
(see, for instance, \cite[Chap.~IX, Propositions~1.3, 1.9]{Bass}), 
it follows from \thmref{thm:Main-1} that $SK_1(S'[M])=0$. 
Hence, the image of $\phi$ (say, $\wt \phi$),  will belong to $E_n(S'[M])$
(the group generated by elementary matrices) for some $n\ge r$. 
We can lift $\wt \phi$ to $E_n(S[M])$. Suitably
enlarging $\sigma_1$ and then changing it to $\sigma_1 \wt \phi^{-1}$,
the new $\phi$ (the image of $\sigma_1 \wt \phi^{-1}$) 
becomes identity. This finishes the proof when $R$ has finite normalization.

Let now $R$ be any Noetherian 1-dimensional ring and let $P$ be a
(finitely generated) projective $R[M]$-module of rank $r \ge 2$.
Lemma~\ref{lem:red-elm} says that we can assume $R$ to be reduced
in order to show that $P$ has a unimodular element.
We solve this case by reducing the problem to the case of finite normalization
using a trick of Roy \cite{Roy}. It goes as follows.

The total quotient ring $K$ of $R$ is a finite product of fields
and hence $P \otimes_{R[M]} K[M]$ is a free $K[M]$-module of rank $r$ by
\cite[Theorem~2.1]{Gub-4} (see also \cite[Theorem~1.1]{Gub-2}. 
Hence, we can find
a non-zero divisor $s \in R$ such that $P \otimes_{R[M]} R_s[M]$ is free of rank 
$r$. 

Let $\widehat{R}$
denote the $s$-adic completion $R$. Since $R \to \widehat{R}$ is flat map of 
$\Q$-algebras (see, for instance, \cite[Theorem~8.8]{Mat}), it follows that
$R[M] \to \widehat{R}[M]$ is also flat. In particular, $s \in \widehat{R}[M]$
is a non-zero divisor. 
Since the fiber squares of rings are preserved by adjoining the monoid $M$,
the analytic isomorphism
$R/sR \xrightarrow{\simeq} {\widehat{R}}/{s\widehat{R}}$ 
implies that there is a fiber square 
\begin{equation}\label{eqn:Lindel-0}
\xymatrix@C1pc{
R[M] \ar[r] \ar[d] & \widehat{R}[M] \ar[d] \\
R_s[M] \ar[r] & \widehat{R}_s[M].}
\end{equation}

Note that $\widehat{R}$ is same as the completion of $R$ along
the radical of ${(s)}$ and hence, it is the product of the completions of
$R$ along the maximal ideals $\{\fm_1, \cdots , \fm_t\}$ which are
the minimal primes of $(s)$ (see, for instance, \cite[Theorem~8.15]{Mat}).
Since $s \in {\rm rad}(\widehat{R})$ is a non-zero divisor, it follows that 
$(\widehat{R}_s)_{\rm red}$ is a product of fields. It follows from
\cite[Theorem~2.1]{Gub-4} and \lemref{lem:red-elm} that
$P \otimes_{R[M]} {\widehat{R}_s[M]}$ is a free 
$\widehat{R}_s[M]$-module of rank $r$.
We conclude that $P\otimes_{R[M]}R_s[M]\otimes_{R_s[M]} \widehat{R}_s[M]$ and 
$P\otimes_{R[M]}\widehat{R}[M] \otimes_{\widehat{R}[M]} \widehat{R}_s[M]$
are free of rank $r$ and they are canonically the 
same as $P\otimes_{R[M]} \widehat{R}_s[M]$.

It follows from \thmref{thm:Main-1} that 
$SK_1(\widehat{R}_s[M]) \simeq SK_1((\widehat{R}_s)_{\rm red}[M]) = 0$.
In particular, $E_n(\widehat{R}_s[M]) = \SL_n(\widehat{R}_s[M])$
for all $n \gg r$.
Let $P' = P \oplus {R[M]}^{n-r}$ and $Q = \wedge^n(P') \oplus {R[M]}^{n-1}$.
Since $P\otimes_{R[M]}R_s[M]$ is free, it follows that 
$P' \otimes_{R[M]}R_s[M] \simeq Q \otimes_{R[M]}R_s[M]$.

Since $\widehat{R}$ is a finite product of Noetherian 1-dimensional
complete local rings, it is classically known (see \cite{Krull} or
\cite[p.~263]{Mat}) that $\widehat{R}_{\rm red}$ has finite normalization.
In particular, it follows from \lemref{lem:red-elm}, and
the above proof of the theorem in the finite normalization case, that
every projective module over $\widehat{R}[M]$ of rank at least two
has a unimodular element. It follows that $Q \otimes_{R[M]} \widehat{R}[M]$
has a unimodular element. We therefore conclude from 
\cite[Proposition~3.4]{Roy} that $P' \simeq Q$.

We can thus write $P \oplus {R[M]}^{n-r} \simeq  
(\wedge^n(P') \oplus {R[M]}^{r-1}) \oplus {R[M]}^{n-r}$.
Finally, one checks at once that Gubeladze's monoid satisfies the
conditions of the cancellation theorem \cite[Theorem~4.4]{Keshari}.
We conclude that $P \simeq \wedge^n(P') \oplus {R[M]}^{r-1}$.
But the latter module is isomorphic to $\wedge^r(P) \oplus {R[M]}^{r-1}$. 
This finishes the proof of the theorem.
$\hfill\square$

\vskip .3cm

\noindent\emph{Acknowledgments.}
The authors would like to thank the referee for carefully reading the paper
and providing valuable suggestions to improve 
its presentation.

\end{document}